\theoremstyle{plain}
\newcommand{\ds }{\ensuremath{\displaystyle}}
\newcommand{\R }{\ensuremath{\mathbb R}}
\newcommand{\C }{\ensuremath{\mathbb C}}
\newcommand{\Q }{\ensuremath{\mathbb Q}}
\newcommand{\Z }{\ensuremath{\mathbb Z}}
\newcommand{\N }{\ensuremath{\mathbb N}}
\newcommand{\T }{\ensuremath{\mathbb T}}
\renewcommand{\P }{\ensuremath{\mathbb P}}
\newcommand{\HHH }{\ensuremath{\mathbb H}}
\newcommand{\hoo}{\ensuremath{\mathcal{H}om}}
\DeclareMathOperator{\rel}{\vphantom{y}rel}
\DeclareMathOperator{\ch}{ch}
\DeclareMathOperator{\supp}{supp}
\DeclareMathOperator{\td}{td}
\newcommand{\tor}{\ttt\! or}
\newcommand{\ext}{\eee xt}
\DeclareMathOperator{\pr}{pr}
\DeclareMathOperator{\gr}{Gr}
\DeclareMathOperator{\coh}{Coh}
\newcommand{\aaa }{\ensuremath{\mathcal{A}}}
\newcommand{\bbb }{\ensuremath{\mathcal{B}}}
\newcommand{\oo }{\ensuremath{\mathcal{O}}}
\newcommand{\hh }{\ensuremath{\mathcal{H}}}
\newcommand{\ff }{\ensuremath{\mathcal{F}}}
\newcommand{\g }{\ensuremath{\mathcal{G}}}
\newcommand{\ttt }{\ensuremath{\mathcal{T}}}
\newcommand{\jj }{\ensuremath{\mathcal{J}}}
\newcommand{\mm }{\ensuremath{\mathcal{M}}}
\newcommand{\nn }{\ensuremath{\mathcal{N}}}
\newcommand{\eee }{\ensuremath{\mathcal{E}}}
\newcommand{\LL }{\ensuremath{\mathcal{L}}}
\newtheorem{theorem}{Theorem}[section]
\newtheorem{lemma}[theorem]{Lemma}
\newtheorem{proposition}[theorem]{Proposition}
\newtheorem{corollary}[theorem]{Corollary}
\theoremstyle{definition}}
\theoremstyle{definition}}
\theoremstyle{definition}}
\theoremstyle{definition}}
\theoremstyle{definition}\newtheorem{definition}[theorem]{Definition}}
\theoremstyle{definition}}
\theoremstyle{definition}\newtheorem{remark}[theorem]{Remark}}
\address{Institut de Math\'{e}matiques de Jussieu, UMR 7586\\
Case 247\\ Universit\'{e} Pierre et Marie Curie\\
4, place Jussieu\\
F-75252 Paris Cedex 05\\
France}
\email{jgrivaux@math.jussieu.fr}
\newcommand{\tn}{\T^{[n]}\be}
\newcommand{\hf}{H\raisebox{-1.3ex}{$\mkern-20mu\leftarrow$}\he{}}
\newcommand{\oti}{\ensuremath{\otimes}}
\newcommand{\he}{^{\vphantom{[n]}} }
\newcommand{\be}{_{\vphantom{[n]}} }
\newcommand{\ba}[1]{\ensuremath{\overline{#1}}}
\newcommand{\ti }[1]{\ensuremath{\widetilde{#1}}}
\newcommand{\rb }{\ensuremath{\raisebox}}
\newcommand{\tim }{\ensuremath{\times}}
\newcommand{\ee }{\ensuremath{^{\, *}}}
\newcommand{\yn}{Y_{n}\he}
\newcommand{\tw}{\ti{W}}
\newcommand{\bop }{\ensuremath{\bigoplus\limits}}
\newcommand{\suq }{\ensuremath{\subseteq}}
\newcommand{\pee }{\ensuremath{\pe }}
\newcommand{\pe }{\ensuremath{^{\, !}}}
\def\apl#1#2#3{#1\mkern -2 mu:\mkern - 4 mu
\xymatrix@C=17pt{#2\!\ar[r]&\!#3}
}
\def\apliso#1#2#3{#1\mkern -2 mu:\mkern - 4 mu
\xymatrix@C=17pt{#2\!\ar[r]^-{\sim}&\!#3}
}
\def\aplpt#1#2#3#4{#1\mkern -4 mu:\mkern - 8 mu
\xymatrix@C=17pt{#2\!\ar[r]&\!#3#4}
}
\def\sutrgd#1#2#3{
\xymatrix@C=17pt{
0\ar[r]&#1\ar[r]&#2\ar[r]&#3\ar[r]&0
}
}
\def\sutrgdpt#1#2#3#4{
\xymatrix@C=17pt{
0\ar[r]&#1\ar[r]&#2\ar[r]&#3\ar[r]&0#4
}
}
\def\sutrgpt#1#2#3#4{
\xymatrix@C=17pt{
0\ar[r]&#1\ar[r]&#2\ar[r]&#3#4
}
}
\def\sutr#1#2#3{
\xymatrix@C=17pt{
#1\ar[r]&#2\ar[r]&#3
}
}
\def\sutrg#1#2#3{
\xymatrix@C=17pt{
0\ar[r]&#1\ar[r]&#2\ar[r]&#3
}
}
\def\sutrd#1#2#3{
\xymatrix@C=17pt{
#1\ar[r]&#2\ar[r]&#3\ar[r]&0
}
}
\def\fl{\xymatrix@C=17pt{
\ar[r]&
}}
\def\flcourte{\xymatrix@C=10pt{
\ar[r]&
}}
\def\flgd#1#2{\xymatrix@C=17pt{#1\!
\ar[r]&\!#2
}}
\def\flgdba#1#2{\xymatrix@C=15pt{#1\!
\ar@{|->}[r]&#2
}}
\def\flcourtegd#1#2{\xymatrix@C=15pt{\!\!#1\!
\ar[r]&\!#2\!\!
}}
\def\flgdbain#1#2{\xymatrix@C=3ex{\!\!\scriptstyle{#1}
\ar[r]&\scriptstyle{#2}\!\!\!
}}
\def\fldouble{\xymatrix@1{
\ar@{->>}[r]&
}}
\def\fle#1#2{
\xymatrix@1{
#1
\ar[r]&#2
}}
\def\flex#1#2#3{
{\xymatrix@1{
#1
\ar[r]^{#3}&#2
}}
}
\def\fledouble#1#2{
{\xymatrix@1{
#1
\ar@{->>}[r]&{#2}
}}
}
\def\flexdouble#1#2#3{
{\xymatrix@1{
#1
\ar@{->>}[r]^{#3}&{#2}
}}
}
\def\diagca#1#2#3#4#5#6#7#8{\xymatrix@1{
#1
\ar[d]_{#6}\ar[r]_{#5}&#2\ar[d]_{#7}\\
#3
\ar[r]_{#8}&#4
}}
\def\sutrois#1#2#3{
{\xymatrix@1{
#1
\ar[r]&#2
\ar[r]&#3
}}
}
\def\sutroiszerogdprime#1#2#3{
{\xymatrix@1{
0
\ar@<-0.5mm>[r]&#1
\ar@<-0.5mm>[r]&#2
\ar@<-0.5mm>[r]&#3
\ar@<-0.5mm>[r]&0
}}
}
\def\fleprime#1#2{
\xymatrix@1{
#1
\ar[r]&#2
}}
\def\sutroisnom#1#2#3#4#5{
{\xymatrix@1{
#1
\ar[r]^{#4}&#2
\ar[r]^{#5}&#3
}}
}
\def\sutroiszerogd#1#2#3{
{\xymatrix@1{
0
\ar[r]&#1
\ar[r]&#2
\ar[r]&#3
\ar[r]&0
}}
}
\def\strgdexp#1#2#3#4#5#6{
{\xymatrix@1{
0
\ar[r]&\rb{#2ex}{$#1$}
\ar[r]&\rb{#4ex}{$#3$}
\ar[r]&\rb{#6ex}{$#5$}
\ar[r]&0
}}
}
\def\sutroiszerog#1#2#3{
{\xymatrix@1{
0
\ar[r]&#1
\ar[r]&#2
\ar[r]&#3
}}
}
\def\suxtroiszerogd#1#2#3#4#5{
{\xymatrix@1{
0
\ar[r]&#1
\ar[r]^{#4}&#2
\ar[r]^{#5}&#3
\ar[r]&0
}}
}
\def\suquatre#1#2#3#4{
{\xymatrix@1{
#1
\ar[r]&#2
\ar[r]&#3
\ar[r]&#4
}}
}
\def\suxquatre#1#2#3#4#5#6#7{
{\xymatrix@1{
#1
\ar[r]^{#5}&#2
\ar[r]^{#6}&#3
\ar[r]^{#7}&#4
}}
}
\def\sucinq#1#2#3#4#5{
{\xymatrix@1{
#1
\ar[r]&#2S^{n}X
\ar[r]&#3
\ar[r]&#4
\ar[r]&#5
}}S^{n}X
}
\def\suxcinq#1#2#3#4#5#6#7#8#9{
{\xymatrix@1{
#1
\ar[r]^{#6}&#2
\ar[r]^{#7}&#3
\ar[r]^{#8}&#4
\ar[r]^{#9}&#5
}}
}
\DeclareMathOperator{\id}{id}
\newcommand{\yg}{\ensuremath{\mathfrak{Y} }}
\newcommand{\ygn}{\ensuremath{\mathfrak{Y}_{n} }}
\newcommand{\xg}{\ensuremath{\mathfrak{X} }}
\newcommand{\xgp}{\ensuremath{\mathfrak{X}' }}
\newcommand{\zg}{\ensuremath{\mathfrak{Z} }}
\newcommand{\zgp}{\ensuremath{\mathfrak{Z}' }}
\newcommand{\sg}{\ensuremath{\mathfrak{S}}}
\newcommand{\re}{\ensuremath{\, \rel}}
\newcommand{\ci}{\ensuremath{\mathcal{C}^{\mkern 1 mu\infty\vphantom{_{p}}}}}
\newcommand{\cio}{\ensuremath{\mathcal{C}^{\mkern 2 mu\omega \vphantom{_{p}}}}}
\newcommand{\pru}{\ensuremath{\pr_{1}^{-1}}}
\newcommand{\snx}{S^{\mkern 1 mu n\!}_{\vphantom{[}} X}
\newcommand{\xn}{X^{[n]}\be}
\newcommand{\hb}{H\ee\be}
\newcommand{\xb}{\ub{x}}
\newcommand{\yb}{\ub{y}}
\newcommand{\ub}[1]{\underline{\vphantom{!}\vphantom{y}#1}}
\newcommand{\jre}{J^{\re }\be}
\newcommand{\jrel}[1]{J^{\re }_{#1}}
\newcommand{\jjrel}[1]{\jj^{\re }_{#1}}
\newcommand{\nbh}{neighbourhood}
\newcommand{\xna}[1]{X^{[#1]}\be}
\newcommand{\kre}[1]{K^{\re}_{\vphantom{f}#1}}
\newcommand{\ore}[1]{\oo^{\re}_{#1}}
\newcommand{\rsas}{relative smooth analytic space}
\newcommand{\rc}{relatively coherent}
\newcommand{\sh}{sheaf}
\newcommand{\sv}{sheaves}
\newcommand{\baf}{\ba{\ff}}
\newcommand{\bag}{\ba{\g}}
\newcommand{\oz}{\oo_{Z}\he}
\newcommand{\tens}[1][]{\mathbin{\otimes_{\raise1.5ex\hbox to-.1em{}{#1}}}}
\title[Topological properties of punctual Hilbert schemes]{Topological properties of punctual Hilbert schemes of almost-complex fourfolds (II)}
\author{Julien Grivaux}
\begin{document}
\begin{abstract}
 In this article, we study the rational cohomology rings of Voisin's punctual Hilbert schemes $\xn$ associated to a symplectic compact fourfold $X$. We prove that these rings can be universally constructed from $H\ee\be(X,\Q)$ and $c_{1}\he(X)$, and that Ruan's crepant resolution conjecture holds if $c_{1}\he(X)$ is a torsion class. Next, we prove that for any almost-complex compact fourfold $X$, the complex cobordism class of $\xn$ depends only on the cobordism class of $X$. 
\end{abstract}
\maketitle
\tableofcontents
\thispagestyle{empty}
\newpage
\setcounter{section}{0}
\section{Introduction}
The punctual Hilbert schemes of a smooth quasi-projective surface have been intensively studied in the past twenty years, starting with the works of G\"{o}ttsche, Nakajima, Grojnowski, Lehn and others (see e.g. \cite{SchHilGo2}). These Hilbert schemes present a strong geometric interest because, among many other properties, they are smooth crepant resolutions of the symmetric products of the surfaces. If $X$ is a $K3$ surface, $\xn$ are hyperk\"{a}hler varieties by a result of Beauville \cite{SchHilBe} and Ruan's conjecture predicts that for all positive integer $n$, 
$H\ee\be\bigl( \xn,\C\bigr)$ is isomorphic as a ring to the orbifold cohomology ring of $\snx$ defined by Chen and Ruan \cite{SchHilCR}, \cite{SchHilALR}. This is obtained by putting together results of Lehn-Sorger on the Hilbert scheme side \cite{SchHilLeSoBis} with the computations done in \cite{SchHilFG} and \cite{SchHilUr} on the orbifold side. The isomorphisms between $H\ee\be\bigl( \xn,\C\bigr)$ and $H\ee_{\textrm{CR}}\bigl( \snx,\C\bigr)$ made it clear that the cohomology rings of punctual Hilbert schemes of a $K3$ surface depend only on the deformation class of the complex structure on $X$ in the space of almost-complex ones, since Chen-Ruan cohomology is a purely almost-complex theory. This is more generally the case for arbitrary projective surfaces: the description of the cohomology ring of Hilbert schemes done in \cite{SchHilLe} and \cite{SchHilLQWMA} shows that $H\ee\be\bigl( \xn,\Q\bigr)$ depends only on the ring $H\ee\be(X,\Q)$ and on the first Chern class of $X$ in $H^{2}\be(X,\Q)$. In the same spirit, it is proved in \cite{SchHilEGL} that the complex cobordism class of $\xn$ depends only on the complex cobordism class of $X$. These facts received an explanation by a recent construction of Voisin \cite{SchHilVo1}: for any almost-complex compact fourfold $X$ and any positive integer $n$, it is possible to construct a punctual Hilbert scheme $\xn$ which is a stable almost-complex differentiable manifold of real dimension $4n$. Besides, $\xn$ is symplectic if $X$ is symplectic \cite{SchHilVo2}. 
\par\medskip 
In Nakajima's theory \cite{SchHilNa}, the correspondence action of the incidence schemes on the cohomology groups of Hilbert schemes is the main ingredient to understand their additive structure via representation theory. This approach has been carried out for the almost-complex case in \cite{SchHilGri}. Our first aim in this paper is now to study the multiplicative structure of the rational cohomology rings of almost-complex Hilbert schemes, generalizing the work of Lehn \cite{SchHilLe} and Li-Qin-Wang \cite{SchHilLQWMA}. We obtain a satisfactory result in the symplectic case:
\begin{theorem}\label{Ch4suite2ThIntroUn}
 If $(X,\omega )$ is a symplectic compact four-manifold, the rings $H\ee\be\bigl( \xn,\Q\bigr)$ can be constructed by universal formulae from the ring $H\ee\be(X,\Q)$ and the first Chern class of $X$ in $H^{2}\be(X,\Q)$.
\end{theorem}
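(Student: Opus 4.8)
The plan is to transpose the strategy of Lehn \cite{SchHilLe} and Li--Qin--Wang \cite{SchHilLQWMA} from the projective to the almost-complex category, replacing intersection theory on smooth projective varieties by the correspondence calculus available for Voisin's Hilbert schemes. First I would recall from \cite{SchHilGri} that the total cohomology space $\bigoplus_{n\geq 0}H\ee\be\bigl(\xn,\Q\bigr)$ is an irreducible Fock space: it is freely generated over the vacuum by the Nakajima creation operators $\mathfrak{q}_{i}(\alpha)$, with $i\geq 1$ and $\alpha\in H\ee\be(X,\Q)$, whose commutators with the annihilation operators are given by universal formulae involving only the cup product and the integration map of $X$. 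Thus the graded vector space, together with the entire action of the Nakajima operators, is already built universally from $H\ee\be(X,\Q)$; what remains is to express the cup product on each summand in these terms.

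Following Lehn, the ring structure is encoded by a single additional family of operators. I would introduce the boundary operator $\mathfrak{d}$, acting on $H\ee\be\bigl(\xn,\Q\bigr)$ as cup product with $-\tfrac{1}{2}$ times the class of the boundary divisor (the locus of non-reduced subschemes, equivalently the exceptional divisor of the Hilbert--Chow morphism $\xn\to\snx$). This class is intrinsic to Voisin's construction, so $\mathfrak{d}$ is a universal operator. The key input is then Lehn's commutator formula, which has the shape
\[
[\mathfrak{d},\mathfrak{q}_{i}(\alpha)]=i\,\mathfrak{q}'_{i}(\alpha)-\frac{i(i-1)}{2}\,\mathfrak{q}_{i}\bigl(c_{1}(X)\cdot\alpha\bigr)-\frac{1}{2}\sum_{\substack{j+k=i\\ j,k\geq 1}}\mathfrak{q}_{j}\mathfrak{q}_{k}(\alpha),
\]
where $\mathfrak{q}'_{i}$ is the derivative operator (itself a universal expression in the $\mathfrak{q}_{i}$ and the cup product of $X$), and in the last sum the class $\alpha$ is distributed through the diagonal of $X$. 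Every coefficient here is a universal polynomial in the ring structure of $H\ee\be(X,\Q)$ and in $c_{1}(X)$. Since $\bigoplus_{n}H\ee\be\bigl(\xn,\Q\bigr)$ is generated as a ring by the images of the $\mathfrak{q}_{i}(\alpha)$ and by the boundary classes, and since multiplication by an arbitrary class can be rewritten, using the above relation, as a word in $\mathfrak{d}$ and the Nakajima operators, universality of the ring would follow at once.

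The heart of the proof, and the main obstacle, is re-establishing this commutator formula in the symplectic setting, where the original derivation by excess-intersection computations on the nested Hilbert scheme $X^{[n,n+1]}$ is unavailable. The strategy I would adopt is to realise $\mathfrak{d}$, the operators $\mathfrak{q}_{i}(\alpha)$, and their commutators as correspondences supported on the incidence loci underlying the calculus of \cite{SchHilGri}, and to compute the relevant fundamental classes and excess normal contributions purely topologically, carefully tracking the induced almost-complex structures so that the correction terms are governed by $c_{1}(X)$ exactly as in the integrable case. The symplectic hypothesis would enter precisely here, furnishing the incidence loci with transverse symplectic normal data and hence the orientation and transversality control needed for these local computations to reproduce the same universal expressions in $c_{1}(X)$ as over a projective surface; this is the point at which the merely almost-complex situation is too weak to determine the cup product, leaving only the cobordism invariance treated separately in this paper.

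Finally, once the commutator formula is known with universal coefficients, I would reconstruct the product on each $H\ee\be\bigl(\xn,\Q\bigr)$ by the same formal computation as in \cite{SchHilLe} and \cite{SchHilLQWMA}, checking the base cases $n\leq 2$---where $\xn$ is $X$ itself or the blow-up of $\snx$ along its diagonal---by hand, and propagating to all $n$ through the relations above. The universality of the resulting formulae is then immediate, since every ingredient entering them is manifestly a universal function of $H\ee\be(X,\Q)$ and $c_{1}(X)$.
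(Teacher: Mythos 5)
Your skeleton is the right one and matches the paper's: Fock-space structure from the Nakajima operators, the boundary operator $\mathfrak{d}$ given by cup product with $c_{1}\he\bigl(\tn\bigr)$, Lehn's commutator formula with a $c_{1}\he(X)$-correction, and then the formal reconstruction of the product following Lehn and Li--Qin--Wang. But there is a genuine gap at the step you yourself identify as the heart of the matter. You propose to recover the correction terms by computing ``excess normal contributions purely topologically,'' with the symplectic form supplying ``transversality control'' for local computations. This cannot work as stated: the paper's analysis (Theorem \ref{uu}) splits the excess contributions into local multiplicities $\mu_{n,m}$, which indeed are computed locally exactly as in Lehn and give $-nm$, and a genuinely \emph{global} class $e_{n}\he\in H^{2}\be(X,\Q)$, which is not accessible by any local or formal argument --- it is precisely the term that encodes how the boundary operator sees the global geometry of $X$. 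The mechanism by which the symplectic hypothesis enters is entirely different from what you describe: one invokes Donaldson's symplectic analogue of the Kodaira embedding theorem (Theorem \ref{ThDo}) to produce, for almost-complex structures $J_{i}\he$ arbitrarily close to $J$ and integrable near the curves, pseudo-holomorphic curves $C_{i}\he$ whose classes span $H_{2}\he(X,\Q)$ (Corollary \ref{cor}). For each such curve $C$ one has $C^{[n]}\be\subset\xn$ locally modelled on the integrable situation, and Lehn's two-fold evaluation of $\int_{\xn}\bigl[X_{0}^{[n]}\bigr]\cdot\bigl[\partial C^{[n]}\be\bigr]$ determines $\int_{X}e_{n}\he\cdot[C]$; since the $[C_{i}\he]$ span $H_{2}\he(X,\Q)$ this pins down $e_{n}\he=\frac{1}{2}n^{2}(|n|-1)c_{1}\he(X)$ (Theorem \ref{qqq}). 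Without this global input your argument does not determine $e_{n}\he$ at all --- for a generic almost-complex structure there are no pseudo-holomorphic curves to test against, which is exactly why the projective proof breaks down and why the statement is only proved under the symplectic hypothesis.

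A secondary omission: knowing $\mathfrak{d}$ and the Nakajima operators does not by itself determine the cup product. One also needs a supply of ring generators whose multiplication operators are expressible in the Heisenberg algebra; in the paper these are the (virtual) tautological Chern characters $G_{i}\he(\alpha,n)$, whose construction for odd-degree $\alpha$ requires the relative coherent sheaf formalism of the appendix (Proposition \ref{Ch4SuitePropUn}), and whose commutators with $\mathfrak{q}_{1}\he(\beta)$ are given by Proposition \ref{UN}. Your closing appeal to ``the same formal computation as in Lehn and Li--Qin--Wang'' implicitly uses all of this, so it should be made explicit that these classes exist and satisfy the required relations in the almost-complex setting; this is not automatic.
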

When $X$ is projective, Lehn uses in an essential way algebraic curves on $X$ via their symmetric products imbedded in the Hilbert schemes to determine some excess intersection terms arising in correspondences computations. This argument fails a priori as soon as $X$ is not algebraic, since there is no pseudo-holomorphic curve on $X$ for a generic almost-complex structure on it. However, Donaldson's theorem on symplectic submanifolds \cite{SchHilDo} allows to produce many paseudo-holomorphic curves for arbitrary small perturbations of a fixed almost-complex structure. As a corollary of an effective version of Theorem \ref{Ch4suite2ThIntroUn}, we obtain the cohomological crepant resolution conjecture for symplectic fourfolds with torsion first  Chern class:
\begin{theorem}\label{Ch4suite2ThIntroDeux}
 Let $(X,\omega )$ be a symplectic four-manifold with zero first Chern class in $H^{2}\be(X,\Q)$. Then for any positive integer $n$, $H\ee\be\bigl( \xn,\C\bigr)$ is isomorphic as a ring to $H\ee_{\textrm{CR}}\bigl( \snx,\C\bigr)$. 
\end{theorem}
This result provides many examples where Ruan's conjecture holds in the symplectic category.
\par\medskip 
The second part of this article deals with the cobordism classes of almost-complex Hilbert schemes. The main result is the following:
\begin{theorem}\label{Ch4suite2ThIntroTrois} 
Let $(X,J)$ be an almost-complex compact four-manifold. For any positive integer $n$, the complex cobordism class of $\xn$ given by its stable almost-complex structure depends only on the complex cobordism class of $X$. 
\end{theorem}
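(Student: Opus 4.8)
The plan is to reduce the statement to a computation of Chern numbers and then to establish a universality property for integrals of tautological classes, following the strategy of Ellingsrud--G\"ottsche--Lehn \cite{SchHilEGL} but carried out entirely within Voisin's differentiable framework. Since two compact stably almost-complex manifolds are complex cobordant if and only if all their Chern numbers coincide, it suffices to prove that every Chern number $\int_{\xn} P\bigl(c_{1}(T\xn),\dots,c_{2n}(T\xn)\bigr)$ is given by a universal polynomial in the two Chern numbers $\int_{X} c_{1}(X)^{2}$ and $\int_{X} c_{2}(X)$ of the fourfold $X$. Indeed, the complex cobordism class of an almost-complex four-manifold is completely determined by this pair, so once such universal formulae are available, two fourfolds with the same cobordism class produce Hilbert schemes with identical Chern numbers, hence complex cobordant ones.

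The first step is to express the Chern classes of the stable tangent bundle of $\xn$ in terms of tautological bundles. Using Voisin's construction together with the results set up in Part (I), the stable almost-complex structure on $\xn$ carries a stable tangent bundle whose class in $K$-theory is built functorially from the tautological bundles associated to $\oo_{X}$ and to $TX$. Consequently every Chern number of $\xn$ can be rewritten as the integral over $\xn$ of a polynomial in Chern classes of tautological bundles, and the problem becomes one of evaluating such \emph{tautological integrals}.

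The heart of the argument is then to prove that any tautological integral is a universal polynomial in the Chern numbers of $X$. I would argue by induction on $n$ using the incidence manifold $X^{[n,\,n+1]}$ parametrising nested configurations, equipped with its projection $p$ to $\xn$, its projection $q$ to $X^{[n+1]}$, and the residual map $\rho$ to $X$ recording the extra point. The short exact sequences relating the tautological bundles at levels $n$ and $n+1$ along the nesting, combined with the projection formula for the pushforwards $p_{*}$ and $q_{*}$, allow one to rewrite an integral over $X^{[n+1]}$ as a sum of integrals over $\xn\times X$ of tautological classes of lower index, thereby decreasing $n$; the base case $X^{[1]}=X$ yields only the Chern numbers of $X$.

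The main obstacle will be this last step: EGL's induction is phrased in the language of coherent sheaves and nested Hilbert schemes, and these must be replaced by the purely topological and $K$-theoretic data furnished by the almost-complex construction. Concretely, the required exact sequences of tautological bundles and the compatibility of the pushforward maps $p_{*}$ and $q_{*}$ have to be established at the level of differentiable complex vector bundles, that is, in $K$-theory, which is nonetheless enough to control Chern numbers in the absence of any holomorphic structure. Once universality is secured, the resulting universal polynomials can be pinned down by evaluation on the complex surfaces $\P^{2}$ and $\P^{1}\times\P^{1}$, whose almost-complex Hilbert schemes coincide with the classical algebraic ones; the known formulae of \cite{SchHilEGL} then both identify the universal expressions and confirm the consistency of the whole scheme.
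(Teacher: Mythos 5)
Your overall strategy coincides with the paper's: reduce to Chern numbers via Novikov--Milnor, then run the Ellingsrud--G\"ottsche--Lehn induction through the incidence variety $X^{[n+1,n]}\be$ to push tautological integrals from $X^{[n+1]}\be$ down to $\xn\tim X$. The reduction step and the shape of the induction step are exactly those of Section \ref{SecCinq}. (The final evaluation on $\P^{2}$ and $\P^{1}\tim\P^{1}$ is superfluous for the statement: once universality is established, the theorem follows without identifying the polynomials explicitly.)

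There is, however, a genuine gap at precisely the point you flag as ``the main obstacle'' and then dispose of in one sentence. The EGL identities are not relations among vector bundles: the class of $T\xn$ is $\pr_{1*}\he\hh om\bigl( \jj_{n}\he,\oo_{n}\he\bigr)$, where $\jj_{n}\he$ and $\oo_{n}\he$ are the ideal sheaf and the structure sheaf of the incidence locus $\yn\suq\xn\tim X$ --- genuinely non-locally-free coherent sheaves supported in codimension $2$ --- and the comparison of $T\xn$ and $TX^{[n+1]}\be$ on $X^{[n+1,n]}\be$ uses exact sequences of such sheaves together with their duals, $\ext$'s and pushforwards. None of these objects or operations is available in the topological $K$-theory of the almost-complex $\xn$, so the assertion that the required exact sequences ``have to be established at the level of differentiable complex vector bundles, that is, in $K$-theory'' is exactly the step that cannot be carried out as stated; likewise, $T\xn$ is not built from the tautological bundles of $\oo_{X}$ and $TX$, but from the incidence sheaf and its dual. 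The paper's way around this is the content of the entire appendix: one replaces $\xn$ by the relative Hilbert scheme $W^{[n]}_{\re}$, fibered in honest analytic Hilbert schemes over $\snx$, develops a theory of relatively coherent sheaves on such relative smooth analytic spaces (with $\tor$, $\ext$, duals, finite pushforwards, a relative analytic $K$-theory, and topological classes obtained from global smooth resolutions), proves the EGL identities fiberwise in this relative category (Propositions \ref{CinqNeuf} and \ref{SansLabBis}), and only then restricts the resulting topological classes to $\xn$. Without this device --- or some substitute giving meaning to the incidence sheaf and its derived operations in the almost-complex setting --- your induction step cannot even be formulated.
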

The proof of \cite{SchHilEGL} for projective surfaces is rather delicate to adapt here because coherent sheaves, contrary to algebraic cycles or vector bundles, have no equivalent in almost-complex geometry. However, it turns out that the classical proof can be carried out in a relative context on spaces fibered in smooth analytic sets over a singular differentiable basis.
\par\medskip 
 \noindent\textbf{Acknowledgement.} I want to thank my advisor Claire Voisin whose work on the almost-complex punctual Hilbert scheme is at the origin of this article. Her deep knowledge of the subject and her numerous advices have been most valuable to me. I also wish to thank her for her kindness and her patience.
\section{The almost-complex Hilbert scheme}
In this section, we recall definitions and fundamental properties of almost-complex Hilbert schemes for the reader's convenience. More details can be found in \cite{SchHilGri}, \cite{SchHilVo1} and \cite{SchHilVo2}.
\subsection{Definitions and basic properties}
Let $(X,J)$ be an almost-complex compact fourfold. For all $n\in\N\ee\be$, we introduce the \emph{incidence set} 
\[
Z_{n}\he=\bigl\{(\xb,p)\ \textrm{in}\ \snx\tim X\ \textrm{such that}\ p\in X\bigr\}\cdot
\]
Let us fix a Riemannian metric $g$ on $X$, and $\varepsilon $ sufficiently small.
\begin{definition}\label{Ch4suite2DefUn}
Let $\bbb$ be the set of pairs $(W,\jre)$ such that 
\begin{enumerate}
 \item [(i)] $W$ is a \nbh\ of $Z_{n}\he$ in $\snx\tim X$.
\item[(ii)] $\jre$ is a relative integrable structure on the fibers of $\apl{\pr_{1}\he}{W}{\snx}$ depending smoothly  on the parameter $\xb$ in $\snx$.
\item[(iii)] $\sup_{{\xb\in\snx},\ {p\in W_{x}\he}}\he\bigl|\bigl|\jrel{\xb}(p)-J_{\xb}\he(p)\bigr|\bigr|_{g}\he<\varepsilon $.
\end{enumerate}
 For $\varepsilon $ small enough, $\bbb_{\varepsilon }\he$ is nonempty and weakly contractible.
\end{definition}
\begin{definition}\label{Ch4suite2DefDeux}
 Let $(W,\jre)$ be a relative integrable structure in $\bbb$. The \emph{topological Hilbert scheme} $X^{[n]}_{\jre}$ is the subset of $W^{[n]}_{\re}$ defined by 
$
X^{[n]}_{\jre}=\bigl\{(\xb,\xi)\ \textrm{in}\ W^{[n]}_{\re}\ \textrm{such that}\ HC(\xi )=\xb\, \bigr\}
$, where $\apl{HC}{W^{[n]}_{\xb}}{S^{n}\be W_{\xb}\he}$ is the usual Hilbert-Chow morphism.
\end{definition}
To obtain a differentiable structure on $X^{[n]}_{\jre}$, Voisin uses relative integrable structures in a contractible subset $\bbb'$ of $\bbb$ satisfying some additional geometric conditions. Her main result is the following:
\begin{theorem}\cite{SchHilVo1}, \cite{SchHilVo2}\label{Ch4suite2ThUn}
Let $\jre$ be a relative integrable structure in $\bbb'$. Then 
\begin{enumerate}
 \item [(i)] $X^{[n]}_{\jre}$ has a natural differentiable structure. Furthermore, if $J'{\vphantom{J}}^{\re}\be$ is another relative integrable structure in $\bbb'$, $X^{[n]}_{\jre}$ and $X^{[n]}_{J'\be{\vphantom{J}}^{\re}\be}$ are canonically isomorphic modulo diffeomorphisms isotopic to the identity.
\item[(ii)] There is a canonical Hilbert-Chow map $\apl{HC}{X^{[n]}_{\jre}}{\snx}$ whose fibers are homeomorphic to the fibers of the usual Hilbert-Chow morphism for projective surfaces.
\item[(iii)] $X^{[n]}_{\jre}$ can be endowed with a stable almost-complex structure. 
\item[(iv)] If $X$ is symplectic and $\jre$ is compatible with the symplectic structure,  $X^{[n]}_{\jre}$ is also symplectic.
\end{enumerate}
 
\end{theorem}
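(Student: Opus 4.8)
The plan is to treat the statement as a construction result and to build each piece of structure on $X^{[n]}_{\jre}$ out of the genuine, integrable theory of Hilbert schemes of smooth complex surfaces, using that $\jre$ turns every fibre $W_{\xb}$ of the first projection into a bona fide complex surface. First I would fix $\xb$ and form the Hilbert scheme of $n$ points $W^{[n]}_{\xb}$ of $(W_{\xb},\jrel{\xb})$: this is a smooth complex manifold of complex dimension $2n$ carrying its Hilbert--Chow morphism $HC\colon W^{[n]}_{\xb}\to \sn W_{\xb}$. The fibre over $\xb$ of the map $X^{[n]}_{\jre}\to\snx$ in (ii) is then $HC^{-1}(\xb)\subset W^{[n]}_{\xb}$, and the classical structure theory of Hilbert--Chow morphisms identifies it, for $\xb=\sum_{i}n_{i}\,p_{i}$ with the $p_{i}$ distinct, with a product $\prod_{i}\mathrm{Hilb}^{n_{i}}_{0}(\C^{2})$ of punctual Hilbert schemes depending only on the local complex structure near the $p_{i}$. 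Since that local structure is the one supplied by $\jre$, these fibres coincide with the ones occurring in the projective case, which already gives (ii).

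For (i) the idea is to produce a differentiable atlas from local product models. Near a point $(\xb_{0},\xi_{0})$ with $\xb_{0}=\sum_{i}n_{i}\,p_{i}$, choose disjoint neighbourhoods $U_{i}$ of the $p_{i}$, so that locally $\snx$ factors as $\prod_{i}S^{n_{i}}U_{i}$ and the incidence condition $HC(\xi)=\xb$ decouples into one condition per point $p_{i}$. On each factor the model is the punctual Hilbert scheme of $\C^{2}$, whose smooth structure is classical; the smooth dependence of $\jre$ on $\xb$ required in condition (ii) of Definition~\ref{Ch4suite2DefUn} then lets one promote these fibrewise models to genuine charts of $X^{[n]}_{\jre}$. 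This is precisely where the extra geometric conditions cutting out the contractible subset $\bbb'$ enter: they are what guarantee that the resulting charts are smooth and mutually compatible.

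The independence assertion in (i) and the two remaining structures I would obtain by deformation. Given $\jre$ and $J'{\vphantom{J}}^{\re}$ in $\bbb'$, the weak contractibility of $\bbb'$ yields a smooth path joining them, and running the chart construction over the parameter interval produces a diffeomorphism between $X^{[n]}_{\jre}$ and $X^{[n]}_{J'\be{\vphantom{J}}^{\re}}$ isotopic to the identity. For (iii), the relative tangent bundle of the fibrewise family $W^{[n]}_{\re}$ is a complex bundle, and combining it with a bundle pulled back from $X$ along the incidence correspondence yields a stable almost-complex structure whose class does not depend on the choices. For (iv), when $\jre$ is compatible with $\omega$ one assembles the fibrewise Kähler data into a global closed nondegenerate two-form.

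The main obstacle is the content of the second paragraph. The locus $\{HC(\xi)=\xb\}$ is the equalizer of two maps into the \emph{singular} symmetric product $\snx$, so it is not a priori a smooth submanifold of $W^{[n]}_{\re}$. The naive dimension count is reassuring --- $W^{[n]}_{\re}$ has real dimension $8n$ and the incidence condition imposes $4n$ real equations, leaving the expected $4n$ --- but it does not by itself establish smoothness, precisely because the target is singular. Controlling this equalizer is what forces the passage to the refined subset $\bbb'$ and requires a delicate local analysis of the punctual Hilbert scheme of $\C^{2}$; once smooth compatible charts are available, the statements (ii)--(iv) are comparatively formal.
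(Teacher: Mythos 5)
This theorem is not proved in the paper at all: it is recalled verbatim from Voisin's papers \cite{SchHilVo1} and \cite{SchHilVo2}, so there is no in-paper argument to compare yours against. Judged on its own terms, your sketch correctly identifies the architecture of Voisin's construction --- local integrable models, identification of the Hilbert--Chow fibres with products of punctual Hilbert schemes of $\C^{2}$, the relative tangent bundle as the source of the stable almost-complex structure, and contractibility of $\bbb'$ for the independence statement --- and you correctly locate the central difficulty in the equalizer $\{HC(\xi)=\xb\}$ sitting over the singular space $\snx$. But naming the difficulty is not the same as resolving it, and that resolution is the entire content of \cite{SchHilVo1}: the charts do not come for free from ``smooth dependence of $\jre$ on $\xb$''; one needs the specific geometric conditions $(\mathscr{C})$ (locally, $J^{\re}_{\xb}$ near a point $p_i$ of the support must depend only on the part of $\xb$ near $p_i$, so that the incidence condition genuinely decouples into a product), and even then the verification that the resulting local models glue to a smooth atlas is a delicate analysis of the punctual Hilbert scheme of $\C^{2}$ that your proposal defers entirely.

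The more serious gap is point (iv). Writing that ``one assembles the fibrewise K\"ahler data into a global closed nondegenerate two-form'' skips the only hard step: a form built fibrewise from the $W^{[n]}_{\xb}$ has no reason to be closed, because the complex structure varies with $\xb$ and the base $\snx$ is singular along the diagonals. Establishing closedness (and nondegeneracy across the strata) is the subject of the whole of \cite{SchHilVo2} and cannot be dispatched in one sentence. Similarly, for (iii) the stable almost-complex structure is given by the restriction of the relative tangent bundle $T^{\re}W^{[n]}_{\re}$ alone (this is how the paper uses it in Lemma \ref{k}); your extra ``bundle pulled back from $X$ along the incidence correspondence'' is not part of the construction and would need justification. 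In short: the skeleton is right, but the proposal is an outline of Voisin's two papers rather than a proof.
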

For arbitrary relative integrable structures, this theorem has the following topological form:
\begin{theorem}\cite{SchHilGri}\label{Ch4suite2ThDeux}
 $\he$\par
\begin{enumerate}
 \item [(i)] Let $\jre$ be a relative integrable structure in $\bbb$. Then $X^{[n]}_{\jre}$ is a topological manifold.
\item[(ii)] If $\bigl\{\jrel{t}\bigr\}_{t\in B(0,r)\suq\R^{d}\be}\he$ is a smooth path in $\bbb$, then the associated relative topological Hilbert scheme $\bigl( \xn,\bigl\{\jrel{t}\bigr\}_{t\in B(0,r)}\he\bigr)$ over $B(0,r)$ is a topological fibration.
\item[(iii)] For any $\xb$ in $\snx$ and any integrable structure in a \nbh\ $U_{\xb}\he$ of $\xb$ sufficiently close to $J$, the Hilbert-Chow morphism $\apl{HC}{\xn}{\snx}$ is locally isomorphic over a \nbh\ of $\xb$ to the classical Hilbert-Chow morphism $\apl{HC}{U^{[n]}_{\xb}}{S^{n}\be U_{\xb}\he}$.
\end{enumerate}

\end{theorem}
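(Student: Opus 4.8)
The plan is to prove the strong local statement (iii) first and then read off (i) and (ii) as formal consequences. The guiding idea is that, \emph{locally over the symmetric product}, the almost-complex object $\xn$ should be indistinguishable from the classical Hilbert scheme of a genuine complex surface; once this is known, smoothness of the latter (Fogarty's theorem) yields the manifold and fibration statements at once. The only genuine work is to compare the relative integrable structure $\jre$, which varies with the parameter $\xb\in\snx$ and is only smooth in it, with a \emph{fixed} integrable structure.

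First I would localize. Fix $\xb_{0}=\sum_{i}n_{i}p_{i}$ with the $p_{i}$ distinct, and choose pairwise disjoint balls $U_{i}\ni p_{i}$. For $\xb$ near $\xb_{0}$ the support of $\xb$ splits into clusters contained in the $U_{i}$, and any $\xi$ with $HC(\xi)=\xb$ decomposes uniquely as a disjoint union of subschemes supported in the $U_{i}$. This yields, over a \nbh\ of $\xb_{0}$ in $\snx$, a homeomorphism of $\xn$ with a product of the relative Hilbert schemes attached to the single clusters $n_{i}p_{i}$, compatibly with $HC$. Hence it suffices to treat a single cluster, i.e. $\xb_{0}=np_{0}$.

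Next comes the heart of the argument. On a small ball $V\ni p_{0}$ every $\jrel{\xb}$ is integrable and $\varepsilon$-close to $J$, so by the Newlander--Nirenberg theorem with smooth dependence on parameters I would produce a smooth family of diffeomorphisms $\phi_{\xb}\colon(V,\jrel{\xb})\to B\suq\C^{2}$ onto a fixed ball, each \emph{biholomorphic} and normalized by $\phi_{\xb}(p_{0})=0$, depending smoothly on $\xb$ near $\xb_{0}$. Setting $\Phi(\xb)=\phi_{\xb}(\xb)\in S^{n}B$ and $\Psi(\xb,\xi)=\phi_{\xb}(\xi)\in B^{[n]}$, one has $HC\circ\Psi=\Phi\circ HC$, so that $\Psi$ transports the almost-complex Hilbert--Chow morphism to the classical one of the complex surface $B$. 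The normalization $\phi_{\xb}(p_{0})=0$ is precisely what prevents a spurious common translation of the configuration, and with it $\Phi$ agrees to first order with $S^{n}(\phi_{np_{0}})$, an honest homeomorphism of symmetric products. \textbf{This is the step I expect to be the main obstacle:} since $\snx$ is singular at $\xb_{0}$ one cannot invoke the inverse function theorem directly, and one must check by hand that $\Phi$ is a homeomorphism of a \nbh\ of $np_{0}$ onto a \nbh\ of $0$ in $S^{n}B$ --- e.g. by a proper-homotopy/degree comparison with $S^{n}(\phi_{np_{0}})$ --- and that $\Psi$ is then a homeomorphism intertwining the two Hilbert--Chow morphisms. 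That identification is exactly statement (iii), with $U_{\xb}=B$; that the local model is independent of the integrable structure chosen follows from the same straightening applied to two fixed structures.

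Finally I would read off the remaining assertions. For (i), Fogarty's theorem makes $B^{[n]}$ smooth of complex dimension $2n$, so (iii) exhibits every point of $\xn$ as having a \nbh\ homeomorphic to an open set of a $4n$-dimensional manifold; hence $\xn$ is a topological manifold. For (ii), I would rerun the straightening with the extra parameter $t\in B(0,r)$: the biholomorphisms $\phi_{\xb,t}$ depend smoothly on $(\xb,t)$, so the homeomorphisms $\Psi_{t}$ fit together continuously and trivialize the family $\bigl\{X^{[n]}_{\jrel{t}}\bigr\}_{t}$ over small balls of $B(0,r)$, exhibiting it as a locally trivial topological fibre bundle, in particular a topological fibration.
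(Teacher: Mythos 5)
The paper itself contains no proof of this theorem: it is quoted from the companion paper \cite{SchHilGri}, so there is no internal argument to compare yours with. Your strategy --- localize at a point $\xb_{0}$ of $\snx$, split into clusters, straighten the relative integrable structure by a parametrized Newlander--Nirenberg theorem, identify the result with the classical local model of an open set of $\C^{2}$, and deduce (i) from Fogarty --- is the expected one, and you have correctly isolated the crux: the straightening $\phi_{\xb}$ depends on the parameter $\xb$, so $\Phi(\xb)=\phi_{\xb}(\xb)$ is not $S^{n}$ of a single map of surfaces and its invertibility must be proved by hand. Two of your steps, however, are not sound as written.

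First, the ``proper-homotopy/degree comparison'' you propose for $\Phi$ is insufficient: $S^{n}B$ is not a topological manifold for $n\ge 2$, invariance of domain fails there, and a degree argument with $\Q$-coefficients on a $\Q$-homology manifold yields at best surjectivity, not injectivity or openness. The repair is to lift equivariantly: define $\widetilde{\Phi}:V^{n}\to B^{n}$ by $\widetilde{\Phi}(q_{1},\dots,q_{n})=\bigl(\phi_{\xb}(q_{1}),\dots,\phi_{\xb}(q_{n})\bigr)$ where $\xb=\sum_{i}q_{i}$. Your normalization $\phi_{\xb}(p_{0})=0$ forces the derivative of $\xb\mapsto\phi_{\xb}(p_{0})$ to vanish, so $d\widetilde{\Phi}$ at $(p_{0},\dots,p_{0})$ is block-diagonal with blocks $d\phi_{np_{0}}|_{p_{0}}$, hence invertible; the inverse function theorem together with $\sg_{n}$-equivariance then descends to a homeomorphism of neighbourhoods in the symmetric products, and $\Psi$ follows as a proper continuous fibrewise-bijective map over it. Second, your deduction of (ii) is too quick: the trivializations you construct are local on $\snx\times B(0,r)$, not local on $B(0,r)$, and they have no reason to agree on overlaps, so they do not ``fit together'' into a trivialization of the family over a small ball $D\subseteq B(0,r)$. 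What you actually obtain is that the total space is a proper topological submersion over $B(0,r)$; to conclude that it is a fibration one still needs a genuine local-to-global step (a controlled gluing of the local charts along the Hilbert--Chow stratification, or an appeal to a Siebenmann-type local triviality theorem for proper topological submersions). That missing step is where the real content of (ii) lies.
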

Point (ii) implies that for any couple $\bigl( \jre,J'{\vphantom{J}}^{\re}\be\bigr)$ of relative integrable structures in $\bbb$, $X^{[n]}_{\jre}$ and $X^{[n]}_{J'{\vphantom{J}}^{\re}\be}$ are homeomorphic and the isotopy class of this homeomorphism is canonical. Therefore, there exist canonical rings $H\ee\be\bigl( \xn,\Q\bigr)$ and $K\bigl( \xn\bigr)$ such that for every relative  integrable structure $\jre$ in $\bbb$,  $H\ee\be\bigl( \xn,\Q\bigr)$ is canonically isomorphic to $H\ee\be\bigl( X^{[n]}_{\jre},\Q\bigr)$ and $K\bigl( \xn\bigr)$ to $ K\bigl( X^{[n]}_{\jre}\bigr)$.
\par
Point (iii) implies that G\"{o}ttsche's classical formula for the Betti numbers of punctual Hilbert schemes also holds in the almost-complex case (see \cite{SchHilGriTh}). 
\subsection{Incidence varieties and Nakajima operators}
If $n$, $m$ $\in\N\ee\be$, let 
\[
Z_{n\tim m}\he=\bigl\{(\xb,\,\yb,\,p)\ \textrm{in}\ \snx\tim\snx\tim X\ \textrm{such that}\ p\in\xb\cup\yb\bigr\}\cdot
\]
Relative integrable structures in a \nbh\ of $Z_{n\tim m}\he$ will be denoted by $\jrel{n\tim m}$.
\begin{definition}\label{Ch4suite2DefTrois}
$\he$\par
\begin{enumerate}
 \item [(i)] If $\bigl(W_{1}\he, J^{1,\re}_{n\tim m}\bigr)$ and $\bigl(W_{2}\he, J^{2,\re}_{n\tim m}\bigr)$ are two relative integrable structures in \nbh s of $Z_{n\tim m}\he$, the \emph{product Hilbert scheme} $\bigl( X^{[n]\tim[m]}\be,J^{1,\re}_{n\tim m},J^{2,\re}_{n\tim m}\bigr)$ is defined by 
\begin{align*}
 \bigl( X^{[n]\tim[m]}\be,J^{1,\re}_{n\tim m},J^{2,\re}_{n\tim m}\bigr)=\bigl\{
(\xi ,\,\xi ',\,\xb,\,\yb)\ \textrm{in}\  W^{[n]}_{1,\re}&\tim_{\snx\tim S^{m}\be X}\he W^{[m]}_{2,\re}\\& \textrm{such that}\ 
HC(\xi )=\xb\ \textrm{and}\ HC(\xi ')=\yb
\bigr\}\cdot
\end{align*}
\item[(ii)] If $m>n$ and $\bigl( \ti{W},\jrel{n\tim(m-n)}\bigr)$ is a relative integrable structure in a \nbh\ of $Z_{n\tim(m-n)}$, the \emph{incidence variety} is defined by 
\begin{align*}
 \bigl(\xna{m,n},\jrel{n\tim m}\bigr)=\bigl\{
(\xi ,\,\xi ',\,&\xb,\,\yb)\ \textrm{in}\ \ti{W}^{[n]}_{\re}\tim_{\snx\tim S^{m}\be X}\he \ti{W}^{[m]}_{\re}\\ &\textrm{such that}\ \xi \subset\xi ',\ HC(\xi )=\xb\ \textrm{and}\ HC(\xi ')=\xb\cup\yb
\bigr\}\cdot
\end{align*}
\end{enumerate}
\end{definition}
As it is the case for topological Hilbert schemes, the product Hilbert scheme and the incidence varieties are uniquely defined up to homeomorphisms isotopic to the identity. 
\par\medskip 
Let $\bigl( W,\jrel{\vphantom{(}n\tim m}\bigr)$, $\bigl( \ti{W},\jrel{n\tim(m-n)}\bigr)$, $\bigl( W',\jrel{\vphantom{(}n}\bigr)$ and $\bigl( W'',\jrel{\vphantom{(}m}\bigr)$ be relative integrable structures in \nbh s of $Z_{\vphantom{(}n\tim m}$, $Z_{n\tim(m-n)}$, $Z_{\vphantom{(}n}$ and $Z_{\vphantom{(}m}$. We introduce the following set of compatibility conditions of relative analytic spaces (see Section \ref{n}):
\begin{enumerate}
 \item [(1)] $W\tim_{\snx\tim S^{m}\be X}\he(\snx\tim S^{m-n}\be X)\suq\ti{W}$, where the base change map is $\flgdba{(\xb,\yb)}{(\xb,\xb\cup\yb).}$
\item[(2)] $W'\tim_{\snx}\he(\snx\tim S^{m-n}\be X)\suq\ti{W}$, where the base change map is the first projection.
\item[(3)] $W''\tim_{S^{m}\be X}\he(\snx\tim S^{m-n}\be X)\suq\ti{W}$, where the base change map is $\flgdba{(\xb,\yb)}{\xb\cup\yb.}$
\end{enumerate}
-- If (1) holds, there is a natural embedding of $\bigl( \xna{m,n},\jrel{n\tim(m-n)}\bigr)$ into $\bigl( X^{[n]\tim[m]}\be,\jrel{n\tim m},\jrel{n\tim m}\bigr).$
\par 
\noindent-- If (2) holds, there is a natural morphism $\lambda $ from $\bigl( \xna{m,n},\jrel{n\tim(m-n)}\bigr)$ to $\bigl( \xn,\jrel{\vphantom{(}n}\bigr)$.
\par 
\noindent-- If (3) holds, there is a canonical morphism $\nu $ from $\bigl( \xna{m,n},\jrel{n\tim(m-n)}\bigr)$ to $\bigl( X^{[m]}\be,\jrel{\vphantom{(}m}\bigr)$. 
\par\medskip 
\noindent Unfortumately, conditions (2) and (3) cannot hold at the same time. However, since $X^{[n]\tim[m]}\be$ is canonically homeomorphic up to isotopy to $\xn\tim X^{[m]}\be$, one still has morphisms from $\xna{m,n}$ to $\xn$ and $X^{[m]}\be$ whose homotopy classes are canonical. They will still be denoted by $\lambda $ and $\nu $.
\par\medskip 
The incidence varieties $\xna{m,n}$ are locally homeomorphic to the integrable model $U^{[m,n]}\be$ where $U$ is an open set of $\C^{2}$. This allows to put a stratification on $\xna{m,n}$. In this way, $\xna{m,n}$ is a stratified topological space locally modelled over an analytic space, so it has a fundamental homology class.
\par\medskip 
The construction by Nakajima and Grojnowski of representations of the Heisenberg super-algebra of $H\ee\be(X,\Q)$ into $\HHH:=\oplus_{n\in\N}\he H\ee\be\bigl( \xn,\Q\bigr)$ via correspondence actions of incidence varieties done in \cite{SchHilNa} and \cite{SchHilGr} also holds in the almost-complex setting:
\begin{theorem}\cite{SchHilGri}\label{Ch4suite2ThTrois} 
 If $(X,J)$ is an almost-complex compact fourfold, Nakajima operators $\mathfrak{q}_{i}\he(\alpha )$, $i\in\Z$, $\alpha \in H\ee\be(X,\Q)$, can be constructed.
They depend only on the deformation class of $J$ and satisfy the Heisenberg commutation relations\emph{:}
\[
\forall i,j\in\Z,\ \forall\alpha ,\beta\in H\ee\be(X,\Q),\quad
\bigl[ \mathfrak{q}_{i}\he(\alpha ),\mathfrak{q}_{j}\he(\beta )\bigr]=i\,\delta _{i+j,0}\he\,\Bigl( \int_{X}\alpha \beta\Bigr)\id_{\HHH}\he. 
\]
Furthermore, these operators induce an irreducible representation of $\hh\bigl( H\ee\be(X,\Q)\bigr)$ in $\HHH$ with highest weight vector $1$.
\end{theorem}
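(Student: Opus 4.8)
The plan is to realise each $\mathfrak{q}_i(\alpha)$ as a correspondence carried by the incidence varieties and to establish the Heisenberg relations by localising the composition of two such correspondences on a diagonal, where the computation is governed by the classical integrable model. First I would define the creation operators. For $i>0$, let $Q^{[n+i,n]}\subset\xna{n+i,n}$ be the closed subvariety on which the residual support $\yb$ is concentrated at a single point of $X$; by the stratified structure recalled above it carries a fundamental homology class, the residual-support map $\rho\colon Q^{[n+i,n]}\to X$, and the canonical homotopy classes of $\lambda\colon Q^{[n+i,n]}\to\xn$ and $\nu\colon Q^{[n+i,n]}\to\xna{n+i}$. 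For $a\in H\ee\be\bigl(\xn,\Q\bigr)$ I would then set
\[
\mathfrak{q}_i(\alpha)(a)=\nu_*\bigl(\lambda^*a\cup\rho^*\alpha\bigr),
\]
where $\nu_*$ is the Gysin map attached to $[Q^{[n+i,n]}]$ via Poincar\'e duality, with the appropriate degree shift. For $i<0$ I would take $\mathfrak{q}_i(\alpha)$ to be the transposed correspondence (the adjoint for the Poincar\'e pairings, up to the standard sign), and $\mathfrak{q}_0=0$; by construction $\mathfrak{q}_i(\alpha)$ raises the grading on $\HHH$ by $i$.

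Second, I would check that these operators depend only on the deformation class of $J$. A priori they depend on the chosen relative integrable structure $\jre\in\bbb$, but Theorem \ref{Ch4suite2ThDeux}(ii), applied both to the Hilbert schemes and, by the same argument, to the incidence correspondences over a smooth path in $\bbb$, produces topological fibrations along which the fundamental classes and the maps $\lambda,\nu,\rho$ vary continuously. Hence the induced maps on cohomology are locally constant; since $\bbb$ is weakly contractible (Definition \ref{Ch4suite2DefUn}) they are canonical, and a path joining two deformation-equivalent structures shows the dependence is on the deformation class alone.

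Third --- and this is where the real difficulty lies --- I would prove the commutation relations. Fixing $i,j,\alpha,\beta$, the two products $\mathfrak{q}_i(\alpha)\mathfrak{q}_j(\beta)$ and $\mathfrak{q}_j(\beta)\mathfrak{q}_i(\alpha)$ are computed from the fibre products of the two incidence correspondences over the intermediate Hilbert scheme. On the open stratum where the two residual support points are distinct, the two orders of composition agree, since the modifications take place at separate points; these contributions therefore cancel in the commutator, which consequently localises on the diagonal stratum where the support points collide. The crux is to identify this diagonal contribution: by Theorem \ref{Ch4suite2ThDeux}(iii) the Hilbert--Chow morphism near such a point is isomorphic to the classical one for an integrable surface germ, so the local excess-intersection term coincides with the one computed in Nakajima's and Grojnowski's theory, yielding the factor $i\,\delta_{i+j,0}\bigl(\int_X\alpha\beta\bigr)\id_{\HHH}$, the multiplicity $i$ being the length of the residual scheme along the diagonal. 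The main obstacle is to make this localisation rigorous in the purely topological category: one must verify that the fibre products of the stratified incidence varieties carry the expected fundamental classes, that the strata of positive codimension contribute nothing for dimension reasons, and that the excess datum producing the diagonal term is read off from the local analytic model rather than from a global intersection theory, which is unavailable since $X$ carries no algebraic structure.

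Finally, I would deduce irreducibility and the highest weight property. The vacuum $1\in H\ee\be\bigl(\xna{0},\Q\bigr)=\Q$ is annihilated by every $\mathfrak{q}_i(\alpha)$ with $i<0$, and is thus a highest weight vector. By the Heisenberg relations the submodule generated by $1$ under the creation operators $\mathfrak{q}_i(\alpha)$, $i>0$, is a copy of the Fock space over $H\ee\be(X,\Q)$; comparing its graded dimensions with \got's Betti-number formula, valid in the almost-complex case by Theorem \ref{Ch4suite2ThDeux}(iii), shows that this submodule already fills $\HHH$, which forces the representation to be irreducible.
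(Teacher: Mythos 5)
This theorem is quoted here without proof from the companion paper \cite{SchHilGri}, and your sketch follows essentially the same strategy that paper (as summarised in the present one) uses: define $\mathfrak{q}_{i}\he(\alpha )$ by correspondences carried by the fundamental classes of the stratified incidence varieties, get canonicity from the topological fibration statement of Theorem \ref{Ch4suite2ThDeux} (ii), reduce the commutator to a local computation in the integrable model via Theorem \ref{Ch4suite2ThDeux} (iii) where the universal diagonal constant is read off from Nakajima--Grojnowski, and deduce irreducibility by comparing Fock-space dimensions with G\"{o}ttsche's formula, which holds in the almost-complex case by the same local isomorphism. You correctly isolate the genuine technical content (fundamental classes of the fibre products of stratified correspondences and the excess term on the diagonal stratum), which is exactly what the cited paper has to supply.
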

\section{The boundary operator}
The aim of this section is to carry out for symplectic fourforlds Lehn's computation of the boundary operator done in \cite{SchHilLe}. The first part of Lehn's argument can be adapted to the almost-complex case as in the proof of Nakajima relations done in \cite{SchHilGri}. This is explained in Section \ref{len}. The result of Section \ref{SecRef}, based on Donaldson's symplectic Kodaira's theorem allows to carry out in section \ref{exce} the second part of Lehn's argument when $X$ is symplectic.
\subsection{Lehn's formula in the almost-complex case}\label{len}
Let $(X,J)$ be an almost-complex compact fourfold and let $\T$ be the trivial complex line bundle on it. If $\jre$ is a relative structure in a \nbh\ of $Z_{n}\he$, there is a relative tautological bundle $\T^{[n]}_{\re}$ on $W^{[n]}_{\re}$ whose restriction to the fibers $\bigl( W^{[n]}_{\xb},\jrel{\xb}\bigr)$ are the usual tautological bundles $\bigl( \oo\he_{W^{[n]}_{\xb}}\bigr)^{[n]}\be$. If $\tn =\T^{[n]}_{\re}{}_{\vert\xn}\he$, $\tn $ satisfies the following properties (see \cite{SchHilGri}):
\begin{enumerate}
 \item [(i)] The class of $\tn $ in $K\bigl( \xn\bigr)$ is independent of $\jre$.
\item[(ii)] $-2c_{1}\he\bigl( \tn\bigr)$ is Poincar\'{e} dual with $\Q$-coefficients to $\bigl[ \partial\xn\bigr]$, where 
\[
\partial\xn=\bigl\{(\xi ,\xb)\ \textrm{in}\ \xn\ \textrm{such that there exists}\ p\ \textrm{in}\ \xb\ \textrm{with}\ \textrm{length}_{p}(\xi )\ge 2 \bigr\} 
\] is the so-called \emph{boundary of} $\xn$.
\item[(iii)] If $\apl{\lambda }{\xna{n+1,n}}{\xn}$ and $\apl{\nu }{\xna{n+1,n}}{\xna{n+1}}$ are the usual homotopy classes, $D$ is the exceptional divisor in $\xna{n+1,n}$ and $F$ is the complex line bundle on $\xna{n+1,n}$ whose first Chern class is Poincare dual to $-[D]$, then 
$\lambda \ee\be\,\T^{[n+1]}\be-\nu \ee\be\,\tn=F$ in $K\bigl( \xna{n+1,n}\bigr)$.
\end{enumerate}
We recall Lehn's definition of the boundary operator:
\begin{definition}\label{ah}
 Let $\HHH=\bigoplus_{n\ge 0}\he H\ee\be\bigl( \xn,\Q\bigr)$.
\begin{enumerate}
 \item [(i)] The \emph{boundary operator} $\apl{\mathfrak{d}}{\HHH}{\HHH}$ is defined by
$\mathfrak{d}\bigl[ (\alpha _{n}\he)_{n\ge 0}\he\bigr]=\bigl( c_{1}\he\bigl( \tn\bigr)\cup\alpha _{n}\he\bigr)_{n\ge 0}\he$.
\item[(ii)] If $A$ is an endomorphism of $\HHH$, \emph{the derivative} $A'$ of $A$ is defined by the formula
\[
A'=[\mathfrak{d},A]=\mathfrak{d}\circ A-A\circ\mathfrak{d}.
\]
\end{enumerate}
\end{definition}
\noindent We now state the following result:
\begin{theorem}\label{uu}
 Let $(X,J)$ be an almost-complex compact fourfold. There exist classes $\bigl( e_{n}\he\bigr)_{n\ge 0}\he$ in $H^{2}\be(X,\Q)$ such that
\[
\forall n,m\in\Z,\ \forall \alpha ,\beta \in H\ee\be(X,\Q),\quad
\bigl[ \mathfrak{q}'_{n}(\alpha ),\mathfrak{q}_{m}\he(\beta )\bigr]=-nm\,\mathfrak{q}_{n+m}\he(\alpha \beta )+\delta _{n+m,\,0}\he\Bigl( \int_{X}e_{|n|}\he\alpha \beta \Bigr)\id_{\HHH}\he.
\]
\end{theorem}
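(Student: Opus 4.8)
The plan is to follow Lehn's strategy \cite{SchHilLe}, replacing the sheaf-theoretic computations by the correspondence calculus on the stratified incidence varieties $\xna{m,n}$, exactly as the Heisenberg relations of Theorem \ref{Ch4suite2ThTrois} are obtained in \cite{SchHilGri}. Since $\mathfrak{q}_{-k}\he(\gamma )$ is, up to sign, the adjoint of $\mathfrak{q}_{k}\he(\gamma )$ for the Poincar\'e pairing on $\HHH$, and since $\mathfrak{d}$ is self-adjoint (being cup-product with the even class $c_{1}\he(\tn)$), the asserted relation is invariant under taking adjoints, so I may assume $n\ge 1$. A preliminary degree count already forces the shape of the answer: writing $a=\deg\alpha $ and $b=\deg\beta $, the bracket $\bigl[\mathfrak{q}'_{n}(\alpha ),\mathfrak{q}_{m}\he(\beta )\bigr]$ raises cohomological degree by $a+b+2(n+m)-2$ and the grading index by $n+m$, which matches $\mathfrak{q}_{n+m}\he(\alpha \beta )$ exactly, while a term proportional to $\id_{\HHH}$ can only occur when $n+m=0$ and $a+b=2$.

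First I would realise the derivative $\mathfrak{q}'_{n}(\alpha )=[\mathfrak{d},\mathfrak{q}_{n}(\alpha )]$ geometrically. As $\mathfrak{d}$ acts by cup-product with $c_{1}\he(\tn)$, commuting it through the incidence correspondence defining $\mathfrak{q}_{n}(\alpha )$ amounts, via the tautological relation $\lambda \ee\be\,\T^{[n+1]}\be-\nu \ee\be\,\tn=F$ with $c_{1}\he(F)$ Poincar\'e dual to $-[D]$, to inserting the class of the exceptional divisor into that correspondence. The essential point, which is what makes the almost-complex case accessible, is that this identification is purely local along the fibres of the Hilbert--Chow map: the $\xna{m,n}$ are locally homeomorphic to the integrable models $U^{[m,n]}$ with $U\subset\C^{2}$, so the decorated correspondences carry exactly the same local intersection data as in the projective situation, and their compositions are computed in cohomology through the fundamental classes of the strata.

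Next I would compose the two decorated correspondences and localise the bracket. Away from the locus where the $n$-point and $m$-point configurations collide, $\mathfrak{q}'_{n}(\alpha )$ and $\mathfrak{q}_{m}\he(\beta )$ commute up to sign, so $\bigl[\mathfrak{q}'_{n}(\alpha ),\mathfrak{q}_{m}\he(\beta )\bigr]$ is supported on the collision stratum, over which the $n+m$ points merge above a single point of $X$; the transverse part of this stratum produces $\mathfrak{q}_{n+m}\he(\alpha \beta )$, and the coefficient $-nm$ emerges as the local excess-intersection number of the collision, computed in the integrable chart precisely as in \cite{SchHilLe}. This excess-intersection step is the one I expect to be the main obstacle: one must run the self-intersection computation of $[D]$ entirely inside the stratified topological framework, verify that it is independent of the chosen relative integrable structure, and check that in the integrable chart it reduces to Lehn's algebraic value; this is the exact analogue here of the excess terms treated in \cite{SchHilGri}.

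Finally I would dispose of the diagonal term. When $n+m\neq 0$ the term $-nm\,\mathfrak{q}_{n+m}\he(\alpha \beta )$ accounts for the whole bracket, so I may take $m=-n$. The remaining contribution is a scalar multiple of $\id_{\HHH}$, given by an intersection number on the fully collapsed stratum, which fibres over $X$ with the collision point as parameter. Because $\alpha $ and $\beta $ are both pulled back to this common point, they enter only through the cup-product $\alpha \beta $, so by the degree count the fibre integrand is $\alpha \beta $ times a fixed degree-two class; the contribution is therefore $\int_{X}e_{n}\he\alpha \beta $, where $e_{n}\in H^{2}\be(X,\Q)$ is the fibre integral over $X$ of the characteristic data of the collapsed correspondence. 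Since the intersection form on $H^{2}\be(X,\Q)$ is non-degenerate, $e_{n}$ is well defined by this functional, which establishes the existence statement; its explicit evaluation in terms of $c_{1}\he(X)$, requiring Donaldson's symplectic hyperplane theorem, is deferred to the subsequent sections.
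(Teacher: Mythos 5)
Your proposal follows essentially the same route as the paper's proof: the decisive step in both is the identification $\bigl(u_{*}\he\bigr)'=\bigl[u\cap c_{1}\he\bigl(I_{\,\T}^{[m,\,n]}\bigr)\bigr]_{*}\he$ for the relative tautological difference bundle $I_{\,\T}^{[m,\,n]}=\nu\ee\be\,\T^{[m]}\be-\lambda\ee\be\,\tn$ on the incidence correspondence (your ``insertion of the exceptional divisor class'', which is the paper's Lemma \ref{hum}, stated for general $m>n$ and not only $m=n+1$), after which Lehn's commutator computation is transported through the local integrable charts, giving the locally computable coefficient $-nm$ while leaving the global class $e_{n}\he$ undetermined at this stage. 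This matches the paper's argument in structure and substance, including the reduction of the diagonal term to a degree-two functional on $H\ee\be(X,\Q)$.
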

\begin{proof}
 Let $n,\,m$ be positive integers such that $m\ge n$, $\jrel{n\tim(m-n)}$ a relative integrable structure in a \nbh\ of $Z_{n\tim(m-n)}\he$, and let $I^{[m,\,n]}_{\,\T,\re}$ be the complex bundle on $W^{[m,\,n]}_{\re}$ whose restriction to the fibers $W^{[m,\,n]}_{\xb,\,\yb}$ are the kernels of the natural surjective morphisms from $\bigl( \oo_{W^{[m,\,n]}_{\xb,\,\yb}}\bigr)^{[m]}\be$ to $\bigl( \oo_{W^{[m,\,n]}_{\xb,\,\yb}}\bigr)^{[n]}\be$. We put $I_{\,\T}^{[m,\,n]}=I_{\,\T,\re}^{[m,\,n]}{}_{\vert\xna{m,\,n}}\he$. Then $I_{\,\T}^{[m,\,n]}=\nu \ee\be\,\T^{[n+1]}\be-\lambda \ee\be\,\tn$ in $K\bigl( \xna{m,\,n}\bigr)$, the proof being similar to  \cite[Proposition 3.5]{SchHilGri}. This allows to prove that correspondences behave well under derivations:
\begin{lemma}\label{hum}
 Let $u$ be a class in $H_{*}\he\bigl( \xna{m,\,n},\Q\bigr)$ and $\apl{u_{*}\he}{H\ee\be\bigl( \xn,\Q\bigr)}{H\ee\be\bigl( X^{[m]}\be,\Q\bigr)}$ be the associated correspondence map.\
Then $\bigl( u_{*}\he\bigr)'\be=\bigl[ u\cap c_{1}\he\bigl( I_{\,\T}^{[m,\,n]}\bigr)\bigr]_{*}\he$.
\end{lemma}
\begin{proof}
Let $\apl{\lambda }{\xna{m,\,n}}{\xn}$ and $\apl{\nu }{\xna{m,\,n}}{\xna{m}}$ be the usual maps. The identity 
\[
\nu \ee\be \T^{[m]}\be-\lambda \ee\be \tn =I_{\,\T}^{[m,\,n]}
\] is satisfied in $K\bigl( \xna{m,\,n}\bigr)$. Thus
\begin{align*}
 \bigl( u_{*}\he\bigr)'\tau &=c_{1}\he\bigl( \T^{[m]}\be\bigr)\cup u_{*}\he\tau-u_{*}\he\bigl( c_{1}\he\bigl(\T^{[n]}\be \bigr)
\cup \tau \bigr) \\
&=PD^{-1}\Bigl[ \bigl( \nu  _{*}\he(u\cap\lambda \ee\be\tau )\bigr)\cap c_{1}\he\bigl( \T^{[m]}\be\bigr)-\nu _{*}\he\bigl( u\cap\lambda \ee\be \bigl( c_{1}\he\bigl( \T^{[n]\be}\bigr)\cup\tau \bigr)\bigr)\Bigr]\\
&=PD^{-1}\Bigl[ \nu _{*}\he\Bigl( u\cap \bigl[\bigl( \nu \ee\be c_{1}\he\bigl( \T^{[m]}\be\bigr)-\lambda \ee\be c_{1}\he\bigl( \T^{[n]}\be\bigr)\bigr)\cup\lambda \ee\be\tau \bigr]\Bigr)\Bigr]\\
&=PD^{-1}\, \nu _{*}\he\Bigl( \bigl[ u\cap c_{1}\he\bigl(I_{\T}^{[m,\,n]} \bigr)\bigr]\cap \lambda \ee\be \tau \Bigr).
\end{align*}
\end{proof}
By this lemma, to prove the theorem it suffices to compute the commutator of two corres\-pondences. We adapt Lehn's proof exactly as in \cite{SchHilGri} for the Nakajima relations. This yields (see \cite{SchHilGriTh} for a detailed exposition):
\par\medskip 
$_{\ds *}$ For all $m,n$ in $\Z$ with $m\neq n$, $\bigl[ \mathfrak{q}'_{n}(\alpha ),\mathfrak{q}_{m}\he(\beta )\bigr]=\mu _{n,\,m}\he\,\mathfrak{q}_{n+m}\he(\alpha \beta )$, where $\mu _{n,\,m}\he\in\Z$.
\par 
$_{\ds *}$ For all $n$ in $\Z$, $\bigl[ \mathfrak{q}'_{n}(\alpha ),\mathfrak{q}_{-n}\he(\beta )\bigr]=\bigl( \int_{X}e_{|n|}\he\,\alpha \beta\bigr)\id_{\HHH}\he $, where the classes $e_{|n|}\he$ belong to $ H^{2}\be(X,\Q)$.
\par\medskip 
The terms $\mu _{n,\,m}\he$ and $e_{n}\he$ are \emph{the excess contributions}. The multiplicity $\mu _{n,\,m}\he$ can computed locally on $X$, so that Lehn's proof applies and gives $\mu _{n,\,m}\he=-nm$. However, this is not the case for the class $e_{n}\he$ which involves the \emph{global} geometry of $X$.
\end{proof}
\begin{corollary}\label{aas}
 When $\alpha $ runs through a basis of $H\ee\be(X,\Q)$, the operators $\mathfrak{d}$ and $\mathfrak{q}_{1}\he(\alpha )$ generate $\HHH$ from the vector $1$.
\end{corollary}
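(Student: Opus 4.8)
My plan is to deduce the corollary formally from two ingredients already available: the description of $\HHH$ as an irreducible Fock space furnished by Theorem~\ref{Ch4suite2ThTrois}, and the derivation formula of Theorem~\ref{uu}. By Theorem~\ref{Ch4suite2ThTrois} the operators $\mathfrak{q}_{i}(\alpha)$ realise $\HHH$ as the irreducible representation of $\hh\bigl(H\ee\be(X,\Q)\bigr)$ with highest weight vector $1$; concretely this means $\mathfrak{q}_{i}(\alpha)\cdot 1=0$ for $i<0$ and that $\HHH$ is linearly spanned by the creation monomials $\mathfrak{q}_{n_{1}}(\gamma_{1})\cdots\mathfrak{q}_{n_{k}}(\gamma_{k})\cdot 1$ with all $n_{j}\ge 1$ and all $\gamma_{j}\in H\ee\be(X,\Q)$. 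Let $\aaa$ denote the associative subalgebra of $\en(\HHH)$ generated by $\mathfrak{d}$ and by the operators $\mathfrak{q}_{1}(\alpha)$ for $\alpha$ in a fixed basis of $H\ee\be(X,\Q)$; by linearity $\mathfrak{q}_{1}(\gamma)\in\aaa$ for every $\gamma$. Since $\aaa\cdot 1$ then contains every spanning monomial as soon as all creation operators lie in $\aaa$, it suffices to prove that $\mathfrak{q}_{n}(\gamma)\in\aaa$ for every $n\ge 1$ and every $\gamma\in H\ee\be(X,\Q)$.

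I would establish this by induction on $n$, the case $n=1$ being the definition of $\aaa$. Write $1_{X}\in H^{0}(X,\Q)$ for the unit. Applying Theorem~\ref{uu} with $m=1$, $\alpha=1_{X}$, $\beta=\gamma$, and observing that the Kronecker term vanishes because $n+1\neq 0$ while $1_{X}\cdot\gamma=\gamma$, one gets
\[
\bigl[\mathfrak{q}'_{n}(1_{X}),\mathfrak{q}_{1}(\gamma)\bigr]=-n\,\mathfrak{q}_{n+1}(\gamma),
\]
hence, dividing by the nonzero integer $-n$,
\[
\mathfrak{q}_{n+1}(\gamma)=-\tfrac{1}{n}\bigl[[\mathfrak{d},\mathfrak{q}_{n}(1_{X})],\mathfrak{q}_{1}(\gamma)\bigr].
\]
If the induction hypothesis gives $\mathfrak{q}_{n}(1_{X})\in\aaa$, then $[\mathfrak{d},\mathfrak{q}_{n}(1_{X})]\in\aaa$ because $\mathfrak{d}\in\aaa$, and a further commutator with $\mathfrak{q}_{1}(\gamma)\in\aaa$ stays in $\aaa$; this yields $\mathfrak{q}_{n+1}(\gamma)\in\aaa$ for all $\gamma$ and closes the induction. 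Consequently $\aaa$ contains all creation operators, so $\aaa\cdot 1=\HHH$, which is exactly the assertion.

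I do not expect any serious obstacle here: the content of the corollary is entirely carried by Theorem~\ref{uu}, and what remains is purely formal. The only two points demanding attention are that the excess scalar contribution drops out in the range $n+m>0$ that we actually use, and that multiplying by the unit $1_{X}$ lets the product class $\alpha\beta$ equal any prescribed target $\gamma$, so that the nonvanishing coefficient $-n$ can be inverted. Both are immediate, so the corollary should follow with no additional geometric input beyond what precedes it.
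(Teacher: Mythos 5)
Your proof is correct and follows essentially the same route as the paper, which deduces the corollary from the specialization $\bigl[\mathfrak{q}'_{1}(\alpha),\mathfrak{q}_{m}(1)\bigr]=-m\,\mathfrak{q}_{m+1}(\alpha)$ of Theorem~\ref{uu}; you use the mirror specialization $\bigl[\mathfrak{q}'_{n}(1_{X}),\mathfrak{q}_{1}(\gamma)\bigr]=-n\,\mathfrak{q}_{n+1}(\gamma)$, but the inductive mechanism (iterated commutators with $\mathfrak{d}$ manufacture all creation operators, which span $\HHH$ applied to the highest weight vector) is identical.
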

\begin{proof}
 This is a consequence of the relations $\bigl[ q'_{1}(\alpha ),q_{m}\he(1)\bigr]=-m\,q_{m+1}\he(\alpha )$, $m\in\N$.
\end{proof}

\subsection{Holomorphic curves in symplectic fourfolds}\label{SecRef}
Until now, we have only considered integrable structures in small open sets of $(X,J)$. To compute the excess classes $e_{n}\he$, we will construct pseudo-holomorphic curves in $X$ for perturbed almost-complex structures. To do so we use the following theorem of Donaldson \cite{SchHilDo}, which is a symplectic version of Kodaira's imbedding theorem:
\par\medskip 
\begin{theorem}\cite{SchHilDo}\label{ThDo}
 Let $(V,\omega )$ be a symplectic manifold of dimension $2n$ such that $\omega $ is an integral class. If $h$ is a lift of $\omega $ in $H^{2}\be(V,\Z)$, then, for $k\gg 0$, the classes $PD(kh)$ in $ H_{2n-2}\he(V,\Z)$ can be realized by homology classes of symplectic submanifolds. More precisely, if $J$ is an almost-complex structure on $V$ compatible with $\omega $, it is possible to write $PD(kh)=\bigl[ S_{k}\he\bigr]$ for $k$ large enough, where $S_{k}\he$ is a $J_{k}\he$-holomorphic codimension $2$ submanifold in $V$ and $||J_{k}\he-J||_{C^{0}\be}\he\le C/\sqrt{k}$.
\end{theorem}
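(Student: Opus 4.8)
The plan is to prove this via the method of asymptotically holomorphic sections, which is essentially the only known route to such a statement. First I would fix a Hermitian line bundle $(L,h_L)$ on $V$ together with a unitary connection whose curvature equals $-2\pi i\,\omega$; the integrality hypothesis guarantees that such an $L$ exists with $c_1(L)=h$, and passing to $L^{\otimes k}$ multiplies the curvature by $k$. The central device is to rescale the metric by a factor of order $\sqrt{k}$, so that on $\omega$-geodesic balls of radius $O(1/\sqrt{k})$ the geometry of $(V,J,L^{\otimes k})$ converges to the flat model $(\C^{n},J_{\mathrm{std}},\mathcal{O})$ carrying the Gaussian weight. In this model the section $z\mapsto e^{-|z|^{2}/4}$ is genuinely holomorphic, and I would use rescaled, localized copies of it as reference sections $s_{k,p}$ of $L^{\otimes k}$ peaked at points $p\in V$. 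These are \emph{asymptotically holomorphic}: in the rescaled metric one has $|\bar\partial s_{k,p}|=O(1/\sqrt{k})$ while $|\partial s_{k,p}|$ stays bounded below near $p$.

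Next I would assemble a global section $s_k$ of $L^{\otimes k}$ from controlled combinations of these peak sections and perturb it to be \emph{uniformly transverse to zero}: there should be an $\eta>0$ independent of $k$ such that, wherever $|s_k|<\eta$, the covariant derivative $\partial s_k$ is bounded below in norm (again in the rescaled metric). This quantitative transversality is the crux, and I expect it to be the main obstacle. It is obtained by Donaldson's globalization scheme: cover $V$ by $O(1)$ families of $1/\sqrt{k}$-separated balls, and over each family apply a quantitative Sard-type lemma to perturb $s_k$ by small multiples of localized asymptotically holomorphic sections, raising the transversality constant on that family while only mildly degrading it elsewhere. The delicate point is to preserve both the $C^{0}$-smallness of $\bar\partial s_k$ and the lower bound on $\partial s_k$ through finitely many such steps, which forces one to replace the classical Sard theorem by explicit polynomial estimates on the measure of near-critical values.

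Finally, given a uniformly transverse asymptotically holomorphic section $s_k$, I would set $S_k=s_k^{-1}(0)$. Transversality makes $S_k$ a smooth real codimension-$2$ submanifold whose Poincaré dual is $c_1(L^{\otimes k})=k\,h$, so that $[S_k]=PD(kh)$ in $H_{2n-2}(V,\Z)$. Since $|\bar\partial s_k|$ is small relative to the lower bound on $|\partial s_k|$ along $S_k$, each tangent space $T_xS_k$ is $O(1/\sqrt{k})$-close to a $J$-complex subspace; hence $\omega|_{S_k}$ is nondegenerate and $S_k$ is symplectic. To exhibit the required compatible structure I would modify $J$ only in directions transverse to $S_k$ so that $TS_k$ becomes genuinely complex, producing an almost-complex structure $J_k$ for which $S_k$ is $J_k$-holomorphic and $\|J_k-J\|_{C^{0}}\le C/\sqrt{k}$, which is precisely the asserted estimate.
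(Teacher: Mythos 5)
The paper does not prove this statement at all: it is quoted verbatim from Donaldson \cite{SchHilDo} and used as a black box, so there is no internal proof to compare against. Your outline is a faithful summary of Donaldson's actual argument (peak sections of $L^{\otimes k}$, asymptotic holomorphicity in the $\sqrt{k}$-rescaled metric, quantitative transversality via the globalization scheme and a quantified Sard lemma, then perturbing $J$ near $S_k=s_k^{-1}(0)$ to make it $J_k$-holomorphic), and you correctly identify the uniform transversality step as the crux, though as written it is a plan rather than a complete proof of that step.
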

\noindent We apply this theorem to our situation:
\begin{corollary}\label{cor}
 Let $(X,\omega )$ be a symplectic compact fourfold and $J$ an adapted almost-complex structure on $X$. Then there exist almost-complex structures 
$\bigl( J_{i}\he\bigr)_{1\le i\le N}\he$ arbitrary close to $J$ and $J_{i}\he$-holomorphic curves $\bigl( C_{i}\he\bigr)_{1\le i\le N}\he$ such that\emph{:}
\begin{enumerate}
 \item [(i)] For all $i$, $J_{i}\he$ is integrable in a \nbh\ of $C_{i}\he$.
\item[(ii)] The classes $\bigl[ C_{i}\he\bigr]$ span $H_{2}\he(X,\Q)$.
\end{enumerate}
\end{corollary}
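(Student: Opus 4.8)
The plan is to produce the curves by applying Donaldson's Theorem \ref{ThDo} not to $\omega$ alone but to a whole finite family of symplectic forms whose cohomology classes span $H^2(X,\Q)$, and then to arrange integrability (condition (i)) by a purely local modification of the almost-complex structure near each curve. The two requirements are essentially independent: (ii) comes from the global input of Donaldson's theorem combined with a variational argument, whereas (i) is a soft local construction on a tubular neighbourhood.

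For (ii), I would first record that $[\omega]\neq 0$ (since $\int_X\omega^2>0$) and that both nondegeneracy of a $2$-form and the taming condition $\omega'(v,Jv)>0$ are open $C^0$-conditions on a compact $X$. Fixing a linear section of the de Rham map $H^2(X,\R)\to\{\text{closed }2\text{-forms}\}$, every class $a$ close to $[\omega]$ is represented by a closed form $\omega_a$ close to $\omega$; for $a$ in a small enough neighbourhood $U$ of $[\omega]$ the form $\omega_a$ is symplectic and still tames $J$. Since rational classes are dense and linear independence is an open condition, I can choose rational classes $a_1,\dots,a_N\in U$ spanning $H^2(X,\Q)$ (for instance rational perturbations of $[\omega],[\omega]+\delta e_1,\dots$ for a basis $(e_i)$ chosen so that these vectors are independent, using $[\omega]\neq 0$). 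For each $i$, starting from the $\omega_{a_i}$-tame structure $J$ and applying the standard polar-decomposition retraction produces an $\omega_{a_i}$-compatible almost-complex structure $J^{(i)}$, which is close to $J$ because $\omega_{a_i}$ is close to $\omega$ (for which the retraction returns $J$ itself). Clearing denominators, $q_i\omega_{a_i}$ has integral class for a suitable $q_i\in\N$ and is still $J^{(i)}$-compatible.

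I would then apply Theorem \ref{ThDo} to each triple $(X,q_i\omega_{a_i},J^{(i)})$: for $k\gg 0$ it yields a $J^{(i)}_k$-holomorphic surface $C_i$ with $[C_i]=PD(kq_i a_i)$, where $J^{(i)}_k$ is $C^0$-close to $J^{(i)}$ and hence as close to $J$ as we wish. Since $PD$ is an isomorphism and the $a_i$ span $H^2(X,\Q)$, the nonzero multiples $[C_i]=PD(kq_i a_i)$ span $H_2(X,\Q)$, which is exactly (ii).

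It remains to upgrade each $J^{(i)}_k$ to a global structure $J_i$ that is integrable near $C_i$ while keeping $C_i$ holomorphic and $J_i$ close to $J$; this is where the real work lies, since Donaldson's theorem provides no integrability. Here I would use that $C_i$ is a real surface, so $(C_i,J^{(i)}_k|_{TC_i})$ is automatically a Riemann surface, and that its $J^{(i)}_k$-invariant normal bundle $N_{C_i/X}$ is a complex line bundle, which I equip with a holomorphic structure. The total space of $N_{C_i/X}$ is then a genuine complex surface containing $C_i$ as a holomorphic curve; transporting this integrable structure through a tubular-neighbourhood diffeomorphism whose $1$-jet along $C_i$ is the identity gives an integrable $J'$ on a neighbourhood of $C_i$ that agrees with $J^{(i)}_k$ along $C_i$, so shrinking the neighbourhood makes $J'$ arbitrarily $C^0$-close to $J^{(i)}_k$. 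Finally I interpolate between $J'$ and $J^{(i)}_k$ across a collar to obtain a global $J_i$ equal to $J'$ (hence integrable) on a smaller neighbourhood of $C_i$ and equal to $J^{(i)}_k$ outside; the interpolation stays within the space of almost-complex structures because the two structures are $C^0$-close there, and the resulting $J_i$ is close to $J$. The main obstacle, deserving the most care, is precisely this passage from Donaldson's holomorphic-but-merely-almost-complex curve to an integrable local model, together with the bookkeeping needed to keep the $C^0$-size of every perturbation below the prescribed distance from $J$.
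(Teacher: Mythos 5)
Your proposal is correct and follows essentially the same route as the paper: perturb $\omega$ to a family of nearby symplectic forms with rational classes spanning $H^{2}\be(X,\Q)$, apply Donaldson's theorem to compatible almost-complex structures close to $J$, and then obtain the integrable local model by putting a holomorphic structure on the complex line bundle $N_{C_{i}\he/X}$ over the Riemann surface $C_{i}\he$ and gluing it to the ambient structure in an annulus around $C_{i}\he$. The only differences are expository --- you spell out the openness of the taming condition, the polar-decomposition retraction, and the denominator-clearing that the paper leaves implicit.
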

\begin{proof}
 We pick $\alpha _{1}\he,\dots,\alpha _{N}\he$ in $H^{2}\be(X,\R)$ such that the $\omega +\alpha _{i}\he$'s are symplectic forms in $H^{2}\be(X,\Q)$ which generate $H^{2}\be(X,\Q)$. There exist almost-complex structures $\bigl( \ti{J}_{i}\he\bigr)_{1\le i\le N}\he$ adapted to 
$(\omega +\alpha _{i}\he)_{1\le i\le N}\he$ arbitrary close to $J$ if the $\alpha _{i}\he$'s are small enough. For suitable sufficiently large values of $m$, the symplectic forms $m(\omega +\alpha _{i}\he)$ are integral and Donaldson's theorem applies. We obtain \mbox{$J_{i}\he$-holomorphic} curves $(C_{i})_{1\le i\le N}\he$ where $J_{i}\he$ is arbitrary close to $\ti{J}_{i}\he$  and $\bigl[ C_{i}\he\bigr]=PD\bigl( k_{i}\he(\omega +\alpha _{i}\he)\bigr)$ in $H_{2}\he(X,\Q)$. 
Let $U_{i }\he$ be a small \nbh\ of the zero section of $N_{C_{i}\he/X}$. We identify $U_{i}\he$ with a \nbh\ of $C_{i}\he$ in X. 
Since $\dim C_{i}\he=2$, $J_{i}\he{}_{\vert \, C_{i}}$ is integrable. Since $C_{i}\he$ is $J_{i}\he$-holomorphic, $N_{C_{i}\he/X}$ is naturally a complex vector bundle over the complex curve $C_{i}\he$, so that we can put a holomorphic structure on it. This gives an integrable structure $J'_{i}$ on $U_{i}\he $ such that 
$J'_{i}{}_{\vert \, C_{i}}=J_{i}\he{}_{\vert \, C_{i}}$. We glue together $J_{i}\he$ and $J'_{i}$ in an annulus around $C_{i}\he$. The resulting almost-complex structure can be chosen arbitrary close to $J_{i}\he$ if $U_{i}\he $ is small enough.
\end{proof}
\subsection{Computation of the excess term in the symplectic case}\label{exce}
Our aim in this section is to prove the following theorem:
\begin{theorem}\label{qqq}
 If $(X,\omega )$ is a symplectic compact fourfold and $J$ is any  almost-complex structure compatible with $\omega $, the excess contributions $e_{n}\he$ of Theorem \ref{uu} are given by 
\[
\forall n\in\Z,\quad
e_{n}\he=\frac{1}{2}\,n^{2}(|n|-1)c_{1}\he(X).
\] This means that for all $n,m$ in $\Z$ and for all $\alpha ,\beta $ in $ H^{2}\be(X,\Q)$,
\[
\bigl[ \mathfrak{q}'_{n}(\alpha ),\mathfrak{q}_{m}\he(\beta )\bigr]=-nm\Bigl\{\mathfrak{q}_{n+m}\he(\alpha \beta )-\dfrac{|n|-1}{2}\,\delta _{n+m,0}\he\Bigl( \int_{X}c_{1}\he(X)\alpha \beta \Bigr)\id_{\HHH}\he\Bigr\}\cdot
\]
\end{theorem}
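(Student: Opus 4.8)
The plan is to pin down the universal class $e_n$ by pairing it against a spanning set of curve classes in $H_2(X,\Q)$, each curve being chosen so that the ambient almost-complex structure is integrable in a neighbourhood of it; on such a neighbourhood the excess computation becomes purely complex-analytic and reduces to Lehn's original calculation in \cite{SchHilLe}. First I would record that, by Theorem \ref{uu}, the classes $e_n\in H^2(X,\Q)$ are already known to exist, and that any class in $H^2(X,\Q)$ is determined by its pairings against $H_2(X,\Q)$. Specializing the commutator of Theorem \ref{uu} to $\alpha=1\in H^0(X,\Q)$ and $\beta\in H^2(X,\Q)$ gives $[\mathfrak{q}'_n(1),\mathfrak{q}_{-n}(\beta)]=\bigl(\int_X e_{|n|}\,\beta\bigr)\,\id_{\HHH}$, and for $\beta=PD[C]$ the right-hand coefficient is exactly $\int_C e_{|n|}$. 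Hence it suffices to compute $\int_C e_{|n|}$ for a family of curves whose classes span $H_2(X,\Q)$, and Corollary \ref{cor} provides precisely such curves $C_i$, together with almost-complex structures $J_i$ arbitrarily close to $J$ that are integrable in a neighbourhood $U_i$ of each $C_i$.

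The key step is the localization of this excess coefficient to $U_i$. Since the Nakajima operators, and hence the excess class, depend only on the deformation class of the almost-complex structure (Theorems \ref{Ch4suite2ThTrois} and \ref{uu}), I may replace $J$ by $J_i$ when computing $\int_{C_i} e_{|n|}$. Choosing $\beta=PD[C_i]$, the correspondence defining $\mathfrak{q}_{-n}(\beta)$ is represented by a cycle supported near $C_i$, so the entire commutator $[\mathfrak{q}'_n(1),\mathfrak{q}_{-n}(PD[C_i])]$ is governed by the incidence varieties of configurations degenerating along $C_i$. On $U_i$ the structure $J_i$ is integrable, so these incidence varieties and their fundamental classes coincide with the honest complex-analytic ones over the surface germ $U_i$, and the derivative formula of Lemma \ref{hum} takes its classical form. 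Lehn's computation in \cite{SchHilLe} then applies over $U_i$ and yields $\int_{C_i} e_{|n|}=\tfrac12\,n^2(|n|-1)\int_{C_i} c_1(X)$.

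Finally, since the classes $[C_i]$ span $H_2(X,\Q)$ and the pairing $H^2(X,\Q)\times H_2(X,\Q)\to\Q$ is perfect, the vanishing of $\langle e_n-\tfrac12 n^2(|n|-1)c_1(X),[C_i]\rangle$ for every $i$ forces $e_n=\tfrac12 n^2(|n|-1)c_1(X)$, the sign convention $c_1(X)=-K_X$ matching Lehn's expression in terms of the canonical class. The main obstacle is the localization step: one must argue rigorously that the global excess class $e_n$, evaluated on $[C_i]$, is computed entirely by the geometry near $C_i$ with the integrable structure $J_i$, i.e. that the contributions to the commutator concentrate on configurations collapsing onto the curve. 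This rests on the deformation invariance of the operators and on matching the almost-complex incidence varieties over $U_i$ with Lehn's complex-analytic ones; it is exactly here that Donaldson's theorem, through Corollary \ref{cor}, is indispensable, since in general there is no global pseudo-holomorphic curve on $X$.
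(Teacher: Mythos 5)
Your overall strategy matches the paper's: reduce the determination of $e_n$ to its pairings against a spanning family of curve classes, produce those curves via Donaldson's theorem (Corollary \ref{cor}) together with almost-complex structures integrable in a neighbourhood of each curve, and then import Lehn's projective computation. The reduction in your first and third paragraphs (perfectness of the pairing, specialization to $\alpha=1$, $\beta=PD[C_i]$, deformation invariance allowing the replacement of $J$ by $J_i$) is exactly what the paper does.

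The gap is in your second paragraph, and you flag it yourself as ``the main obstacle'' without closing it. The claim that the commutator $\bigl[\mathfrak{q}'_n(1),\mathfrak{q}_{-n}(PD[C_i])\bigr]$ is computed by incidence varieties supported over a neighbourhood $U_i$ of $C_i$ is not established, and it is not the statement the paper localizes. Indeed $\mathfrak{q}'_n(1)=[\mathfrak{d},\mathfrak{q}_n(1)]$ involves the boundary operator, i.e.\ cup product with the global class $c_1\bigl(\T^{[n]}\bigr)$, and the correspondence defining $\mathfrak{q}_{-n}(PD[C_i])$ only constrains the $n$ residual points to lie near $C_i$, not the whole configuration; making your localization precise would essentially amount to redoing the excess analysis of Theorem \ref{uu} with supports. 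What the paper localizes instead is the single cycle-class identity $[C^{[n]}]=S_n([C])\,.\,1$ (Lemma \ref{sans}), a statement about one compactly supported class on $U_i^{[n]}$, proved by deformation invariance of the Nakajima operators together with the fact that over $S^n U_i$ the almost-complex Hilbert schemes and incidence varieties coincide with the integrable ones. With that identity in hand, the unknown $\int_{C}e_n$ is extracted not by localizing the commutator but by evaluating the intersection number $I=\int_{\xn}\bigl[X_0^{[n]}\bigr]\,.\,\bigl[\partial C^{[n]}\bigr]$ in two ways: once through the already-established \emph{global} commutation relations of Theorem \ref{uu}, which yields $I=\frac{1}{n}\int_X e_n\,.\,[C]+\binom{n}{2}[C]^2$ with $e_n$ as the only unknown, and once by direct local geometry of $C^{[n]}$, which yields $-n(n-1)\deg_C K_X$. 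Comparing the two expressions gives the stated formula. To complete your argument you should replace the ``localization of the commutator'' by Lemma \ref{sans} and this two-sided evaluation of $I$; as written, the decisive computation is asserted rather than proved.
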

\begin{proof}
 If 
$X$ is a smooth projective surface, this theorem is due to Lehn (\cite{SchHilLe}, Th. 3.10). First we sketch his argument, then we adapt it in the symplectic case.
\par\medskip 
In this proof the notation $[Z]$ will be used to denote the cohomological cycle class of a cycle $Z$, and its the homology class as it was previously the case.
\par\medskip 
If $X$ is a smooth projective surface and $C$ is a smooth algebraic curve on $X$, results of Grojnowski and Nakajima (\cite{SchHilGr}, \cite{SchHilNabis}) describe explicitely the class 
$\bigl[ C^{[n]}\be\bigr]$ in $H^{2n}\be\bigl( \xn,\Q\bigr)$ in terms of the classes 
$\mathfrak{q}_{i_{1}\he}\he\bigl( [C]\bigr)\dots \mathfrak{q}_{i_{N}\he}\he\bigl( [C]\bigr)\,.\,1$, where $i_{1}\he,\dots,i_{N}\he$ are positive integers of total sum $n$. 
\par\medskip 
Let $X_{0}^{[n]}$ be the set of schemes in $\xn$ whose support is a single point. If $\partial C^{[n]}\be=C^{[n]}\be\cap\partial\xn$, the term $I=\ds\int_{\xn}\bigl[ X_{0}^{[n]}\bigr]\,.\,\bigl[ \partial C^{[n]}\be\bigr]$ can be computed in two different ways:
\par\medskip 
(i) The integral $I$ is equal to $\mathfrak{q}_{-n}\he(1)\,\bigl( \bigl[ \partial C^{[n]}\be\bigr]\bigr)$. Since $C^{[n]}\be$ and $\partial \xn$ intersect generically transversally, $\bigl[ \partial C^{[n]}\be\bigr]=\bigl[ \partial\xn\bigr]\,.\,\bigl[ C^{[n]}\be\bigr]=-2\,c_{1}\he(\T^{n}\be)\,.\,\bigl[ C^{[n]}\be\bigr]=-2\,\mathfrak{d}\bigl[C^{[n]}\be \bigr]$. Therefore, $I$ is a li\-near combination of terms of the form 
$\mathfrak{q}_{-n}\he(1)\,\he \mathfrak{q}_{i_{1}\he}\bigl( [C]\bigr)\dots \mathfrak{q}'_{i_{k}\he}\bigl( [C]\bigr)\dots \mathfrak{q}_{i_{N}\he}\bigl( [C]\bigr)\,.\,1$, where $i_{1}\he,\dots, i_{N}\he$ are positive integers of total sum $n$. These terms vanish except for:
\begin{enumerate}
 \item [--] $N=1$, $i_{1}\he=n$. 
Then $\mathfrak{q}_{-n}\he(1)\,\mathfrak{q}'_{n}([C])\,.\,1=\ds-\int_{X}e_{n}\he\,.\,[C]$.
\item[--] $N=2$, $i_{1}\he+i_{2}\he=n$. 
Then 
$\mathfrak{q}_{-n}\he(1)\,\mathfrak{q}_{k}([C])\,\mathfrak{q}'_{n-k}([C])\,.\,1=0$ and
\[
\mathfrak{q}_{-n}\he(1)\,\mathfrak{q}'_{k}([C])\,\mathfrak{q}_{n-k}\he([C])\,.\,1=-nk\,\mathfrak{q}_{k-n}\he([C])\,\mathfrak{q}_{n-k}\he([C])\,.\,1=nk(n-k)\,[C]^{2}\be.
\]
\end{enumerate}
This computation gives $I=\dfrac{1}{n}\ds\int_{X}e_{n}\,.\,[C]+\binom{n}{2}\,[C]^{2}\be$.
\par\medskip 
(ii) The cycle $C^{[n]}\be$ intersect transversally $X_{0}^{[n]}$ in its smooth locus and 
$C^{[n]}\be\cap X_{0}^{[n]}=C_{0}^{[n]}\simeq C$. Therefore
$
I=\int_{\xn}\bigl[ X_{0}^{[n]}\bigr]\,.\,\bigl[ C^{[n]}\be\bigr]\,.\,\bigl[ \partial\xn\bigr]=\deg_{\,C}\he\bigl[ \oo_{\xn}\he\bigl( \partial\xn\bigr)\bigr]=\deg
_{\,C}\he\bigl[ \oo_{C^{[n]}\be}\he\bigl( \partial C^{[n]}\be\bigr)\bigr]$,
which is $-n(n-1)\,\deg_{\,C}\he K_{X}\he$ by direct computation.
\par\medskip 
In the algebraic case, the excess terms $e_{n}\he$ lie in the Neron-Severi group of $X$ so that it is enough to prove that for every smooth algebraic curve $C$,
$\ds\int_{X}\Bigl[ e_{n}\he-\dfrac{1}{2}\,n^{2}\be(n-1)\,c_{1}\he(X)\Bigr]\,.\,[C]=0$. 
This is proved by comparison of the two expressions obtained for $I$.
\par\medskip 
Let us now suppose that $(X,\omega )$ is a symplectic compact fourfold and that $J$ is an adapted almost-complex structure on $X$. If $\gamma \in H\ee\be(X)$ is a class
of even degree, we define the vertex operators $\bigl( S_{m}\he(\gamma )\bigr)_{m\ge 0}\he$ by the formula
$\sum_{m\ge 0}S_{m}\he(\gamma )\, t^{m}\be =\exp\left(\sum_{n>0}\frac{(-1)^{n-1}}{n}\, \mathfrak{q}_{n}^{\vphantom{A}}(\gamma )\, t^{n}\be\right)$. Since $\gamma $ is of even degree, the operators $\bigl( \mathfrak{q}_{i}\he(\gamma )\bigr)_{i>0}\he$ commute in the usual sense, and the definition of $S_{m}\he(\gamma )$ makes sense.
\begin{lemma}\label{sans} \emph{(\cite{SchHilGr}, \cite{SchHilNabis} in the integrable case).} Let $\ti{J}$ be an almost-complex structure close to $J$, and let $C$ be a $\ti{J}$-holomorphic curve. Suppose that $\ti{J}$ is integrable in a \nbh\ of $C$. Then $\bigl[ C^{[n]}\be\bigr]=S_{n}\he\bigl( [C]\bigr)\, .\, 1$. 
\end{lemma}
\begin{proof}
Since the Nakajima operators are invariant by deformation of the almost-complex structure, we can make the computations with  almost-complex structures equal to $\ti{J}$ when all the points are near $C$.
 Let $U$ be a small \nbh\ of $C$ such that $\ti{J}$ is integrable in $U$. Then, for any positive integers $n$ and $m$ such that $m>n$, the Hilbert schemes $\xna{m}$, $\xn$ and the incidence variety $\xna{m\!,\,n}$ are the usual integrable ones over $S^{m}\be U$, 
$S^{n}\be U$ and $S^{n}\be U\tim S^{n-m}\be U$ respectively. 
Since Lemma \ref{sans} holds in $H^{2n}_{c}\bigl( U^{[n]}\be,\Q\bigr)$, 
we are done. 
\end{proof}
If $(C,\ti{J})$ satisfies the hypotheses of Lemma \ref{sans}, Lehn's computations recalled above apply verbatim and give $\ds\int_{X}\Bigl[ e_{n}\he-\dfrac{1}{2}\,n^{2}\be(|n|-1)\,c_{1}\he(X)\Bigr]\,.\,[C]=0$. By Corollary \ref{cor}, $H^{2}\be(X,\Q)$ is spanned by cohomology classes of such holomorphic curves. This gives the result.
\end{proof}
The derivative of the Nakajima operators can be explicitely computed in terms of the Virasoro operators $\mathfrak{L}_{n}\he(\alpha )$ defined in \cite[Section 3.1]{SchHilLe}:
\begin{corollary}\label{www}
If $(X,\omega )$ is a symplectic compact fourfold and $J$ is a compatible almost-complex structure, then for all $n$ in $\Z$, $\mathfrak{q}'_{n}(\alpha )=n\, \mathfrak{L}_{n}\he(\alpha )-\frac{1}{2}\,n(|n|-1)\mathfrak{q}_{n}^{\vphantom{A}}\bigl( c_{1}\he(X)\,\alpha \bigr)$.
\end{corollary}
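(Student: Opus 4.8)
The plan is to read Corollary \ref{www} as a purely formal consequence of Theorem \ref{qqq} combined with the commutation rule between the Virasoro operators $\mathfrak{L}_n(\alpha)$ of \cite[Section 3.1]{SchHilLe} and the Nakajima operators, and then to pin down the resulting operator identity using the irreducibility of the Heisenberg representation of Theorem \ref{Ch4suite2ThTrois}. Concretely, I would set
$B_n(\alpha)=n\,\mathfrak{L}_n(\alpha)-\tfrac{1}{2}\,n(|n|-1)\,\mathfrak{q}_n\bigl(c_1(X)\,\alpha\bigr)$ and aim to prove $\mathfrak{q}'_n(\alpha)=B_n(\alpha)$ for every $n\in\Z$ and every $\alpha\in H\ee\be(X,\Q)$, by showing the two operators have the same commutator with every Nakajima operator and then that their difference must vanish.

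First I would record the commutator of a Virasoro operator with a Nakajima operator. Since the operators $\mathfrak{L}_n(\alpha)$ are built from normal-ordered quadratic expressions in the $\mathfrak{q}_i$, the Heisenberg relations of Theorem \ref{Ch4suite2ThTrois} yield, in Lehn's normalisation, the Sugawara-type identity $\bigl[\mathfrak{L}_n(\alpha),\mathfrak{q}_m(\beta)\bigr]=-m\,\mathfrak{q}_{n+m}(\alpha\beta)$, with no scalar correction term (the central term appears only in $[\mathfrak{L}_n,\mathfrak{L}_m]$). This is a formal manipulation inside the Heisenberg algebra, and since the commutation relations for $\mathfrak{q}_i(\alpha)$ in the almost-complex setting are identical to the integrable ones, Lehn's derivation transfers verbatim.

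Next I would compute $\bigl[B_n(\alpha),\mathfrak{q}_m(\beta)\bigr]$ using the identity above for the $\mathfrak{L}_n$-term and the Heisenberg relation $\bigl[\mathfrak{q}_n(\gamma),\mathfrak{q}_m(\beta)\bigr]=n\,\delta_{n+m,0}\bigl(\int_X\gamma\beta\bigr)\id_{\HHH}$ with $\gamma=c_1(X)\,\alpha$ for the second term, obtaining
\[
\bigl[B_n(\alpha),\mathfrak{q}_m(\beta)\bigr]=-nm\,\mathfrak{q}_{n+m}(\alpha\beta)-\tfrac{1}{2}\,n^{2}(|n|-1)\,\delta_{n+m,0}\Bigl(\int_X c_1(X)\,\alpha\beta\Bigr)\id_{\HHH}.
\]
On the other hand, when $n+m=0$ one has $nm=-n^{2}$, so the expression $-nm\bigl\{\mathfrak{q}_{n+m}(\alpha\beta)-\tfrac{|n|-1}{2}\,\delta_{n+m,0}(\int_X c_1(X)\alpha\beta)\,\id_{\HHH}\bigr\}$ provided by Theorem \ref{qqq} for $\mathfrak{q}'_n(\alpha)$ is exactly the same operator. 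Hence $\Delta_n(\alpha):=\mathfrak{q}'_n(\alpha)-B_n(\alpha)$ commutes with every $\mathfrak{q}_m(\beta)$.

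Finally I would conclude by irreducibility. Both $\mathfrak{q}'_n$ and $B_n$ shift the weight grading of $\HHH=\bigoplus_k H\ee\be(\xna{k},\Q)$ by $n$ (the operator $\mathfrak{d}$ preserves each summand, while $\mathfrak{L}_n$ and $\mathfrak{q}_n$ raise it by $n$), so $\Delta_n(\alpha)$ is a weight-$n$ operator commuting with the whole Heisenberg algebra. Applying it to the highest weight vector $1$ and commuting the annihilation operators through shows $\Delta_n(\alpha)\cdot 1$ is again annihilated by all lowering operators, i.e. a highest weight vector; as the representation of Theorem \ref{Ch4suite2ThTrois} is irreducible with a one-dimensional (weight-$0$) space of highest weight vectors, $\Delta_n(\alpha)\cdot 1=0$ for $n\neq 0$, and since the $\mathfrak{q}_m$ generate $\HHH$ from $1$ this forces $\Delta_n(\alpha)=0$; the case $n=0$ is trivial as both sides vanish. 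The only genuinely delicate point is the bookkeeping of the scalar $\delta$-term, so that the factor $\tfrac{1}{2}n(|n|-1)$ coming from $e_{|n|}$ in Theorem \ref{qqq} matches the one produced by the Heisenberg central term in $\bigl[B_n(\alpha),\mathfrak{q}_{-n}(\beta)\bigr]$; everything else is formal.
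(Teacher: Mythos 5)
Your argument is correct and is essentially the proof the paper relies on: the paper simply refers to \cite[p.~180]{SchHilLe}, where Lehn argues exactly as you do, namely that $n\,\mathfrak{L}_{n}\he(\alpha)-\tfrac{1}{2}n(|n|-1)\mathfrak{q}_{n}^{\vphantom{A}}(c_{1}\he(X)\alpha)$ has the same commutators with all Nakajima operators as $\mathfrak{q}'_{n}(\alpha)$ (using $[\mathfrak{L}_{n}\he(\alpha),\mathfrak{q}_{m}\he(\beta)]=-m\,\mathfrak{q}_{n+m}\he(\alpha\beta)$ and Theorem \ref{qqq}), so their difference commutes with the irreducible Heisenberg action and is killed by the weight argument on the highest weight vector $1$. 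Your bookkeeping of the central term matches the $e_{|n|}$ contribution, so there is nothing to add.
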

\noindent For the proof, see \cite[p. 180]{SchHilLe}. 
\section{The ring structure of $H^{*}\bigl(X^{[n]},\Q\bigr)$}
\subsection{Geometric tautological Chern characters}
Let $(X,J)$ be an almost-complex compact fourfold, 
$
\apl{\lambda }{\xna{n+1}}{\xn,}\ \apl{\nu }{\xna{n+1,n}}{X^{[n+1]}\be,}\ \apl{\rho  }{\xna{n+1,n}}{X}
$
the three associated maps, which are canonical up to homotopy, and $D\suq \xna{n+1,n}$ the exceptional divisor.
\par\medskip 
If $E$ is a complex vector bundle on $X$, it is possible to associate to $E$ a sequence of tautological vector bundles $\bigl( E^{[n]}\bigr)_{n>0}\he$ on $\xn$. These tautological bundles are constructed in \cite{SchHilGri} using relative holomorphic structures on $E$, and their classes in \mbox{$K$-theory} are shown to be independent of these auxiliary structures. This defines tautological morphisms from $K(X)$ to $K\bigl( \xn\bigr)$.
\par 
Let $F$ be the complex line bundle on $\xna{n+1,n}$ such that $c_{1}\he(F)$ is Poincar\'{e} dual to $-[D]$. Then (see \cite[3.2]{SchHilGri}),
 $\nu \ee\be E^{[n+1]}\be=\lambda \ee E^{[n]}\be+\rho \ee\be E\oti $F
in $K\bigl( \xna{n+1,n}\bigr)$. This gives the relation $\nu \ee\be\bigl( \ch\bigl( E^{[n+1]}\be\bigr)\bigr)=\lambda \ee\be\bigl( \ch\bigl( E^{[n]}\be\bigr)\bigr)+\rho \ee\be \ch(E)\,.\,c_{1}\he(F)$ in $H^{\textrm{even}}\be\bigl( X^{[n+1,n]}\be,\Q\bigr)$. 
\begin{lemma}\label{Ch4SuiteLemUn}
 For every class $\alpha $ in $H^{\emph{even}}\be(X,\Q)$ and every $n$ in $\N\ee\be$, there exists a unique class $G(\alpha ,n)$ in $H^{\emph{even}}\be\bigl( \xn,\Q\bigr)$ such that $G(\alpha ,1)=\alpha $, and for all $n$ in $\N\ee\be$, 
\[\nu \ee\be G(\alpha ,n+1)-\lambda \ee\be G(\alpha ,n)=\rho \ee\be \alpha \,.\,c_{1}(F).\]
\end{lemma}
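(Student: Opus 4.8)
The plan is to build $G(\alpha,n)$ out of the tautological Chern characters already available for honest bundles, and then to read off uniqueness from the injectivity of $\nu^{*}$. For existence, I would first use that $X$ is a compact manifold, so that the Chern character $\ch\colon K(X)\otimes\Q\to H^{\mathrm{even}}(X,\Q)$ is an isomorphism; hence every $\alpha$ can be written $\alpha=\ch(\mathcal{E})$ for a unique $\mathcal{E}\in K(X)\otimes\Q$. The tautological morphism $K(X)\to K(\xn)$ recalled just above the lemma is additive, so it extends $\Q$-linearly; writing $\mathcal{E}^{[n]}$ for the image of $\mathcal{E}$, I would set $G(\alpha,n)=\ch(\mathcal{E}^{[n]})$. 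Since $\xn=X$ and $\mathcal{E}^{[1]}=\mathcal{E}$ when $n=1$, this gives $G(\alpha,1)=\alpha$; and applying the displayed identity $\nu^{*}\ch(E^{[n+1]})=\lambda^{*}\ch(E^{[n]})+\rho^{*}\ch(E)\cdot c_{1}(F)$ to each genuine summand of $\mathcal{E}$ and extending $\Q$-linearly yields exactly $\nu^{*}G(\alpha,n+1)-\lambda^{*}G(\alpha,n)=\rho^{*}\alpha\cdot c_{1}(F)$.

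For uniqueness it is enough to prove that $\nu^{*}\colon H^{\mathrm{even}}(\xna{n+1},\Q)\to H^{\mathrm{even}}(\xna{n+1,n},\Q)$ is injective for every $n$. Indeed, if $G$ and $G'$ are two solutions and $H_{n}=G(\alpha,n)-G'(\alpha,n)$, then $H_{1}=0$ and $\nu^{*}H_{n+1}=\lambda^{*}H_{n}$, so an immediate induction gives $H_{n}=0$ for all $n$. To obtain the injectivity I would argue that $\nu$ is surjective and generically finite of degree $n+1$: this can be checked on the local integrable model $U^{[n+1,n]}$, where over a reduced length-$(n+1)$ scheme the fibre consists of the $n+1$ ways of deleting one point. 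Consequently $\nu_{*}[\xna{n+1,n}]=(n+1)\,[\xna{n+1}]$, and the projection formula gives $\nu_{*}(\nu^{*}\beta\cap[\xna{n+1,n}])=(n+1)\,\beta\cap[\xna{n+1}]$; since $\xna{n+1}$ is a manifold, Poincar\'{e} duality then forces $\beta=0$ whenever $\nu^{*}\beta=0$.

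The existence half is essentially formal once the Chern character isomorphism and the additivity of the tautological $K$-theory morphism are granted, so the hard part will be the injectivity of $\nu^{*}$. The delicate point there is not the generic degree count, which is purely local, but justifying the fundamental-class and pushforward formalism for $\nu_{*}$ on the stratified topological space $\xna{n+1,n}$, together with the identity $\nu_{*}\nu^{*}=(n+1)\,\id$, in this almost-complex setting where $\nu$ is only defined up to canonical homotopy.
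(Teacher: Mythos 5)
Your proposal is correct and follows essentially the same route as the paper: existence via the Chern character isomorphism $K(X)\otimes_{\Z}\Q\simeq H^{\textrm{even}}(X,\Q)$ and the definition $G(\alpha,n)=\ch\bigl(\mathcal{E}^{[n]}\bigr)$, and uniqueness from the fact that $\nu_{*}\nu^{*}$ is a nonzero multiple of the identity, hence $\nu^{*}$ is injective. The paper states these two points in a single line and leaves the degree count and pushforward formalism for $\nu$ implicit, exactly the points you flag as requiring care.
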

\begin{proof}
By the degeneration of the Atiyah-Hirzebruch spectral sequence from \mbox{$K$-theory} to coho\-mology with \mbox{$\Q$-coefficients}, we have isomorphisms $\apliso{\ch}{K\bigl( \xn\bigr)\oti_{\Z}\he\Q}{H^{\textrm{even}}\be\bigl( \xn,\Q\bigr).}$ Therefore, we can define the classes $G(\alpha ,n)$ in $H^{\textrm{even}}\bigl( \xn,\Q\bigr)$ as follows: if $E$ is the unique class in $K(X)\oti_{\Z}\he\Q$ such that $\ch(E)=\alpha $, then $G(\alpha ,n)=\ch\bigl( E^{[n]}\be\bigr)$. Furthermore, $G(\alpha ,n)$ is unique since $\nu  _{*}\he\nu \ee\be=\frac{1}{n+1}\id$.
\end{proof}
\subsection{Virtual tautological Chern characters}\label{decadix}
This section is somehow technical and can be omitted if we suppose that $b_{1}\he(X)=0$, that is if $H^{\textrm{odd}}\be(X,\Q)=0$. The purpose here is to extend Lemma \ref{Ch4SuiteLemUn} to odd-dimensional cohomology classes. We adapt the method originally developed in the projective case by Li, Qin and Wang in \cite{SchHilLQWMA}.
\begin{proposition}\label{Ch4SuitePropUn}
 For every class $\alpha $ in $H\ee\be(X,\Q)$ and every $n$ in $\N\ee\be$, there exists a unique class $G(\alpha ,n)$ in $H\ee\be\bigl( \xn,\Q\bigr)$ such that $G(\alpha ,1)=\alpha $ and for all $n\in\N\ee\be$, 
\[\nu \ee\be G(\alpha ,n+1)-\lambda \ee\be G(\alpha ,n)=\rho \ee\be \alpha \,.\,c_{1}(F).\]
\end{proposition}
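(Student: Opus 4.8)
The plan is to extend the construction of Lemma~\ref{Ch4SuiteLemUn} from even-degree classes to arbitrary classes $\alpha\in H\ee\be(X,\Q)$, the obstruction being that the Chern character isomorphism only reaches $H^{\textrm{even}}$, so odd classes have no direct interpretation as $\ch(E^{[n]})$ for a $K$-theory class $E$. Following Li--Qin--Wang, I would first settle uniqueness, which is formal: the recursion
\[
\nu\ee\be G(\alpha,n+1)-\lambda\ee\be G(\alpha,n)=\rho\ee\be\alpha\,.\,c_{1}(F)
\]
determines $\nu\ee\be G(\alpha,n+1)$ from $G(\alpha,n)$, and since $\nu_{*}\he\nu\ee\be=\tfrac{1}{n+1}\id$ the class $G(\alpha,n+1)$ is recovered by applying $\tfrac{1}{n+1}\nu_{*}$. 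Thus once existence is known, uniqueness is immediate by induction on $n$ starting from $G(\alpha,1)=\alpha$, exactly as in the even case.

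For existence, the natural device is to build the odd classes out of the even ones by a deformation/derivation argument in the big Fock space $\HHH$. First I would verify that $\alpha\mapsto G(\alpha,n)$ is additive, so it suffices to produce $G(\alpha,n)$ for $\alpha$ ranging over a basis, and then reduce to odd $\alpha$. The key idea is to interpret the defining recursion through the Nakajima correspondence: the maps $\lambda$, $\nu$ and the divisor $D$ on $\xna{n+1,n}$ are precisely the geometric data entering the boundary operator $\mathfrak{d}$ of Definition~\ref{ah}, and the relation involving $c_{1}(F)=-PD^{-1}[D]$ should translate into a statement about the commutator of $\mathfrak{d}$ with the creation operator $\mathfrak{q}_{1}\he(\alpha)$. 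Concretely, I expect that the sought family $\bigl(G(\alpha,n)\bigr)_{n}$ can be characterized as the image of the vacuum under a suitable generating operator, and that the recursion becomes an identity among Nakajima operators that holds for all $\alpha$, of whatever parity, by the Heisenberg relations of Theorem~\ref{Ch4suite2ThTrois}.

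The cleanest route is to use a product structure: for $\alpha$ odd one writes $\alpha$ in terms of products $\alpha=\beta\cup\gamma$ or, more robustly, one transports the even-degree construction along a family. I would introduce an auxiliary even-degree class on $X\tim S^{1}$ or use the pairing structure so that odd classes on $X$ arise as slant products with an even class, then define $G(\alpha,n)$ by the same slant-product recipe and check the recursion holds by naturality of $\lambda$, $\nu$, $\rho$ and $c_{1}(F)$ under this construction. The main obstacle will be precisely this step: making sense of ``$\ch(E^{[n]})$'' when $\alpha$ is odd, i.e.\ producing a canonical class satisfying the recursion without the crutch of the Atiyah--Hirzebruch isomorphism. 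I expect to resolve it by the Li--Qin--Wang technique of realizing odd cohomology via an even class on a product and extracting $G(\alpha,n)$ by a Künneth/slant-product decomposition, after which additivity and the uniqueness argument above immediately upgrade the even-degree Lemma~\ref{Ch4SuiteLemUn} to the full statement.
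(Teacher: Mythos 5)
Your uniqueness argument is the paper's: induction on $n$ using $\nu_{*}\he\nu\ee\be=\frac{1}{n+1}\id$. The existence part, however, has a genuine gap: it is a list of candidate strategies rather than a construction, and the one concrete idea the paper actually uses is missing. The Li--Qin--Wang device is not to realize an odd class $\alpha$ as a cup product or as a slant with an auxiliary even class on some modified space; it is to use a \emph{fixed} universal kernel on $\xn\tim X$, namely the structure sheaf $\oo_{\yn}\he$ of the incidence locus, and to set $G(\alpha ,n)=\pr_{1*}\he\bigl[ \ch(\oo_{\yn}\he)\,.\,\pr_{2}\ee\alpha \,.\,\pr_{2}\ee\td(X)\bigr]$ (Remark \ref{Ch4SuiteRemUn}). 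Since $\alpha$ enters only through $\pr_{2}\ee\alpha$, its parity is irrelevant; the Atiyah--Hirzebruch isomorphism is simply not needed. The recursion is then verified from the exact sequence $\sutrgd{j_{\re*}\LL}{\psi_{W}\ee\,\ti\oo^{\re}_{n+1}}{\phi_{W}\ee\,\ti\oo^{\re}_{n}}$ of Lemma \ref{f} together with Grothendieck--Riemann--Roch for the diagonal injection, which produces exactly the term $\rho\ee\be\alpha\,.\,c_{1}\he(F)$. Your alternative suggestions do not survive scrutiny: odd classes are in general not decomposable as cup products, and an auxiliary even class on $X\tim S^{1}$ has no Hilbert scheme attached to it in this theory, so the recursion cannot even be formulated there. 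Your Fock-space/commutator reading also inverts the logic of the paper: Proposition \ref{UN} is \emph{deduced from} Proposition \ref{Ch4SuitePropUn}, not a route to it.

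There is a second, specifically almost-complex, obstacle you do not address: on $\xn\tim X$ there is no coherent sheaf $\oo_{\yn}\he$ to take a Chern character of, because there is no integrable structure. The paper spends its entire appendix building the category of relatively coherent sheaves on relative smooth analytic spaces precisely so that the relative incidence sheaf $\ore{n}$ on $W^{[n]}_{\re}\tim_{\snx}\he W$ has a well-defined topological class (Proposition \ref{SansLab}), hence Chern classes, which are then restricted to $\xn\tim X$. Any correct proof of existence in this setting must either reproduce that machinery or substitute something equivalent; your proposal contains no replacement for it.
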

\begin{remark}\label{Ch4SuiteRemUn}
If $X$ is projective and $\yn\suq\xn\tim X$ is the incidence locus, it is proved in \cite{SchHilLQWMA} that
$G(\alpha ,n)=\pr_{1}\ee\bigl[ \ch(\oo_{\yn}\he)\,.\,\pr_{2}\ee\alpha \,.\,\pr_{2}\ee\,\td(X)\bigr]$.
\end{remark}
\begin{proof}
 We use the relative trick and the machinery of relative coherent sheaves developed in the appendix. Since these methods are going to be explained thoroughly in the second part of the paper with similar computations (e.g. in Section \ref{qwe}), we skip some lengthy details.
\par\medskip 
If $\xg$ is a \rsas\ and $\zg$ a relative analytic subspace of $\xg$ (see Definition \ref{DefUnAppenCh4}), we will denote $\lim\limits_{\genfrac{}{}{0pt}{}{\longleftarrow}{\xgp}}\he H\ee_{\xgp\cap\zg}(\xgp,\Q)$ by 
$\hf _{\zg}\ee(\xg,\Q)$, where in the projective limit $\xgp$ runs through all open relatively compact relative analytic subspaces of $\xg$.
\par 
If $W$ is a \nbh\ of $Z_{n}\he$ in $\snx\tim X$, $\yg_{n}\he$ is the relative incidence locus and $\ore{n}$ is the relative incidence \sh\ on $W^{[n]}_{\re}\tim_{\snx}\he W$ (see Definition \ref{a} and Definition \ref{b}), the topological class of $\ore{n}$ lies in
$\lim\limits_{\genfrac{}{}{0pt}{}{\longleftarrow}{\xgp}}\he K_{\xgp\cap\yg_{n}\he}\he(\xgp)$. Let $\apl{\pi }{W}{X}$, $\apl{p}{W^{[n]}_{\re}\tim_{\snx}\he W}{W_{\re}^{[n]}}$ be the natural projections.
We define the following cohomology classes:
\begin{enumerate}
 \item [$_{\ds *}$] $\mu ^{\re}_{n}$ is the Chern character of $\ore{n}$ in $\hf\ee_{\yg_{n}\he}\bigl( W^{[n]}_{\re}\tim_{\snx}\he W,\Q\bigr)$.
\item[$_{\ds *}$] $G(\alpha ,n)^{\re}\be=p_{*}\he\bigl[ \mu _{n}^{\re}\,.\,\pi \ee\be\alpha \,.\,\pi \ee\be\td(X)\bigr]$ in $\hf\ee\be\bigl( W^{[n]}_{\re},\Q\bigr)$.
\item[$_{\ds *}$] $G(\alpha,n)=G(\alpha ,n)_{\vert\xn}^{\re}$.
\end{enumerate}
We take the notations introduced at the beginning of Section \ref{Comparaison}, so that $W$ will be from now on a \nbh\ of 
$Z_{n\tim 1}$ in $\snx\tim X$. 
Let $\apl{\pi }{W}{X,}$ $\apl{p}{W^{[n+1,n]}_{\re}\tim_{\snx\tim X}\he W}{W^{[n+1,n]}_{\re}}$ and $\apl{q}{W^{[n]}_{\re}\tim_{\snx\tim X}\he W}{W_{\re}^{[n]}}$ be the natural projections. We define new cohomology classes:
\begin{enumerate}
 \item [$_{\ds *}$] $\ti{\mu }_{n}^{\re}=\ch\bigl( \ti{\oo}^{\re}_{n}\bigr)$ in $\hf\ee_{\,\ti\yg_{n}\he}\bigl( W^{[n]}_{\re}\tim_{\snx}\he W,\Q\bigr)$, where $\ti{\yg}_{n}\he=\supp\bigl( \ti\oo^{\re}_{n}\bigr)$.
\item[$_{\ds *}$] $\ti{G}(\alpha ,n)^{\re}\be=q_{*}\bigl[ \ti{\mu }_{n}^{\re}\,.\,\pi \ee\be\alpha \,.\,\pi \ee\be\td(X)\bigr]$.
\end{enumerate}
Then $\psi _{W}\ee\, \ti{G}
(\alpha ,n+1)^{\re}\be-\phi \ee_{W}\ti{G}(\alpha ,n)^{\re}\be=p_{*}\he\bigl[ (\psi _{W}\ee\, \ti{\mu }^{\re}_{n+1}-\phi \, \ee_{W}\, \ti{\mu }_{n}^{\re})\,.\,\pi \ee\be\alpha \,.\,\pi \ee\be\td(X)\bigr]$.
Lemma \ref{f} shows that this quantity is equal to $p_{*}\bigl[ p\ee\be\ch{(\LL)}\,.\,\rho \ee_{\re,\,W}\ch\bigl( \ore{\Delta _{\re}\he}\bigr)\,.\,\pi \ee\be\alpha \,.\,\pi \ee\be\td(X)\bigr]$, which is equal to $\ch(\LL)\,\rho _{\re}\ee\, r_{*}\he\bigl[ \ch\bigl(\ore{\Delta _{\re}\he}\bigr)\,.\,\pi \ee\be\alpha \,.\,\pi \ee\be\td(X)\bigr]$ via the second diagram of the proof of Proposition \ref{SansLabBis}. Using the same argument as in proof of Lemma \ref{EtToc} (iv), 
\[
\bigl[ \psi _{W}\ee\,\ti{G}(\alpha ,n+1)^{\re}\be-\phi _{W}\ee\,\ti{G}(\alpha ,n)^{\re}\be\bigr]_{\vert\xna{n+1,n}}\he=\ch(\LL)_{\vert\xna{n+1,n}}\he\,\rho \ee\be\bigl[ \pr_{1*}\he\ch\bigl( \ci_{\Delta _{X}\he}\bigr)\,.\,\alpha \,.\,\td(X)\bigr]
\]
which is equal, by the Grothendieck-Riemann-Roch theorem of \cite{SchHilAtHi} applied to the diagonal injection, to $c_{1}\he(F)\,.\,\rho \ee\be(\alpha )$. To conclude the proof, it suffices to show that 
\[
\psi \ee_{W}\,\ti{G}(\alpha ,n+1)^{\re}\be{}\he_{\vert\xna{n+1,n}}=\nu \ee\be \,G(\alpha ,n+1)\quad\textrm{and}\quad\phi \ee_{W}\,\ti{G}(\alpha ,n)^{\re}\be{}\he_{\vert\xna{n+1,n}}=\lambda \ee\be G(\alpha ,n).
\]
This is performed exactly as in Lemma \ref{EtToc}, (i)--(iii).
\end{proof}
\subsection{The ring structure and the crepant resolution conjecture}
In this section, $X$ will be a symplectic compact fourfold endowed with a compatible almost-complex structure.
\par\medskip 
We introduce operators acting on $\HHH=\bop\nolimits_{n\in\N}\he H\ee\be\bigl( \xn,\Q\bigr)$ by cup-product with the components of the virtual tautological Chern characters constructed in Section \ref{decadix}.
\begin{definition}\label{5}
 Let $\alpha \in \hb(X,\Q)$ be a homogeneous cohomology class. Then
\begin{enumerate}
 \item [(i)] $G_{i}\he(\alpha ,n)$ denotes the $(|\alpha |+2i)$-th component of $G(\alpha ,n)$.
\item[(ii)] $\mathfrak{S}_{i}\he(\alpha )$ denotes the operator $\HHH$ which acts on $\hb(\xn,\Q)$ by cup product with 
$G_{i}\he(\alpha ,n)$.
\end{enumerate}
\end{definition}
Then the following result, originally proved by Lehn for geometric tautological Chern characters and generalized by Li, Qin and Wang for the virtual ones, is still valid: 
\begin{proposition}\label{UN}
For all $\alpha ,\beta $ in
 $\hb(X,\Q)$ and for all $k$ in $\N$, $\bigl[ \mathfrak{S}_{k}\he(\alpha ),\mathfrak{q}_{1}\he(\beta )\bigr]=\frac{1}{k!}\, \mathfrak{q}_{1}^{(k)}(\alpha \beta )$.
\end{proposition}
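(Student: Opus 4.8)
The plan is to reduce the statement to a single correspondence computation on the incidence variety $\xna{n+1,n}$, exactly as Lehn does in the projective case, and to read off both sides of the identity from the tautological recursion defining $G(\alpha,n)$ together with Lemma \ref{hum}. I would first record a geometric model for the right-hand side. Writing $v_\beta=[\xna{n+1,n}]\cap\rho\ee\beta$ in $H_*(\xna{n+1,n},\Q)$, the Nakajima creation operator is the correspondence $\mathfrak{q}_1(\beta)=(v_\beta)_*$, so that (up to Poincar\'e duality) $\mathfrak{q}_1(\beta)\tau=\nu_*(\lambda\ee\tau\cup\rho\ee\beta)$ for $\tau\in H\ee(\xn,\Q)$. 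Since $\mathfrak{d}$ acts by cup product with $c_1(\tn)$ (Definition \ref{ah}) and $c_1\bigl(I_{\,\T}^{[n+1,n]}\bigr)=c_1\bigl(\nu\ee\T^{[n+1]}-\lambda\ee\tn\bigr)=c_1(F)$, Lemma \ref{hum} gives $\mathfrak{q}_1'(\beta)=(v_\beta\cap c_1(F))_*$. Iterating the derivation $k$ times on the same incidence variety then yields
\[
\mathfrak{q}_1^{(k)}(\beta)=\bigl(v_\beta\cap c_1(F)^{k}\bigr)_*,\qquad\text{i.e.}\qquad \mathfrak{q}_1^{(k)}(\beta)\tau=\nu_*\bigl(\lambda\ee\tau\cup\rho\ee\beta\cup c_1(F)^{k}\bigr).
\]

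Next I would compute the left-hand side directly. By Definition \ref{5}, $\mathfrak{S}_k(\alpha)$ is cup product with $G_k(\alpha,n)$; applying the commutator to $\tau\in H\ee(\xn,\Q)$ and using the projection formula for $\nu$ gives
\[
\bigl[\mathfrak{S}_k(\alpha),\mathfrak{q}_1(\beta)\bigr]\tau=\nu_*\Bigl(\bigl(\nu\ee G_k(\alpha,n+1)-\lambda\ee G_k(\alpha,n)\bigr)\cup\lambda\ee\tau\cup\rho\ee\beta\Bigr).
\]
The point is that the bracketed difference is governed by the recursion of Proposition \ref{Ch4SuitePropUn}. Expanding $\ch(F)=e^{c_1(F)}$ in the tautological relation $\nu\ee G(\alpha,n+1)-\lambda\ee G(\alpha,n)=\rho\ee\alpha\cup\ch(F)$ (coming from $\nu\ee E^{[n+1]}=\lambda\ee E^{[n]}+\rho\ee E\oti F$ in $K$-theory) and extracting, via Definition \ref{5}(i), the homogeneous component of degree $|\alpha|+2k$, I obtain
\[
\nu\ee G_k(\alpha,n+1)-\lambda\ee G_k(\alpha,n)=\tfrac{1}{k!}\,\rho\ee\alpha\cup c_1(F)^{k}.
\]

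Substituting this into the commutator and combining $\rho\ee\alpha\cup\rho\ee\beta=\rho\ee(\alpha\beta)$ identifies $\bigl[\mathfrak{S}_k(\alpha),\mathfrak{q}_1(\beta)\bigr]\tau$ with $\tfrac{1}{k!}\,\nu_*\bigl(\lambda\ee\tau\cup\rho\ee(\alpha\beta)\cup c_1(F)^{k}\bigr)$, which is precisely $\tfrac{1}{k!}\,\mathfrak{q}_1^{(k)}(\alpha\beta)\tau$ by the first step. This closes the argument.

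I expect the main obstacle to be the bookkeeping rather than the geometry. One must justify that each successive derivative is again realised by a homology class on the fixed incidence variety $\xna{n+1,n}$, so that Lemma \ref{hum} applies verbatim at every stage; and, since Section \ref{decadix} allows $\alpha,\beta$ of odd degree, one must carry the Koszul signs consistently through the projection formula and through the commutations of $\rho\ee\beta$ past $\lambda\ee\tau$ and past $c_1(F)^{k}$, checking that they match the signs built into the definitions of $\mathfrak{q}_1^{(k)}$ and of the operators $\mathfrak{S}_k$. The conceptual heart, by contrast, is clean: it is the degree-$(|\alpha|+2k)$ extraction from $e^{c_1(F)}$ that produces simultaneously the power $c_1(F)^{k}$ matching the $k$-fold derivative and the numerical factor $\tfrac{1}{k!}$.
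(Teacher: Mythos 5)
Your argument is correct and is precisely the argument the paper invokes: its proof of Proposition \ref{UN} is a one-line reference to Lemma \ref{hum} and Proposition \ref{Ch4SuitePropUn} together with Lehn's Theorem 4.2, and what you have written is exactly the unfolding of that reference (iterate Lemma \ref{hum} on the fixed incidence variety $\xna{n+1,n}$ to get $\mathfrak{q}_1^{(k)}(\beta)=\bigl(v_\beta\cap c_1(F)^k\bigr)_*$, apply the projection formula to the commutator, and extract the degree-$(|\alpha|+2k)$ component of the tautological recursion). One point deserves attention: you state the recursion as $\nu\ee G(\alpha,n+1)-\lambda\ee G(\alpha,n)=\rho\ee\alpha\cup\ch(F)$, whereas Proposition \ref{Ch4SuitePropUn} as printed has $c_1(F)$ in place of $\ch(F)=e^{c_1(F)}$; your form is the one the argument actually requires (with the literal $c_1(F)$ version the commutator would vanish for every $k\neq 1$, contradicting the case $k=0$), and it is also what the $K$-theoretic identity $\nu\ee E^{[n+1]}\be=\lambda\ee E^{[n]}\be+\rho\ee E\otimes F$ and the Grothendieck--Riemann--Roch step in the proof of Proposition \ref{Ch4SuitePropUn} actually produce, so you have in effect corrected a typo in the cited statement rather than introduced an error.
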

\begin{proof}
 This is a consequence of Lemma \ref{hum} and Proposition \ref{Ch4SuitePropUn} (see \cite[Theorem 4.2]{SchHilLe}).
\end{proof}
We can now state some significant results on the cohomology rings of Hilbert schemes of symplectic fourfolds. These results are known in the integrable case and are formal consequences of the various relations between $\mathfrak{q}_{n}^{\vphantom{A}}(\alpha )$, $\mathfrak{d}$, $\mathfrak{L}_{n}\he(\alpha )$ and $\mathfrak{S}_{i}\he(\alpha )$ (see e.g. Theorem 2.1 in \cite{SchHilLQW2003}), even though there is a lot of nontrivial combinatorics involved in the proofs. Thus the following results are formal consequences of Theorem \ref{qqq}, Corollary \ref{www} and Proposition \ref{UN}.
\begin{theorem}\label{6}
If $0\le i<n$ and $\alpha $ runs through a fixed basis of $\hb(X,\Q)$, the classes $G_{i}\he(\alpha ,n)$ generate the ring $\hb\bigl( \xn,\Q\bigr)$.
\end{theorem}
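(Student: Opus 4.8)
The plan is to follow the strategy used by Lehn and by Li--Qin--Wang in the integrable case, which is now available to us because all the structural ingredients have been established earlier in the paper. The goal is to show that the subring of $\HHH$ generated by the classes $G_{i}\he(\alpha ,n)$, $0\le i<n$, is the whole of $\hb\bigl( \xn,\Q\bigr)$ for every $n$. By Corollary \ref{aas}, the operators $\mathfrak{d}$ together with the $\mathfrak{q}_{1}\he(\alpha )$ already generate $\HHH$ from the vacuum vector $1$; so it will suffice to re-express these generators in terms of the operators $\mathfrak{S}_{i}\he(\alpha )$ and the ground classes. \emph{First} I would use Proposition \ref{UN}, which reads $\bigl[ \mathfrak{S}_{k}\he(\alpha ),\mathfrak{q}_{1}\he(\beta )\bigr]=\frac{1}{k!}\, \mathfrak{q}_{1}^{(k)}(\alpha \beta )$, to extract the creation operators: applying $\mathfrak{S}_{k}\he(\alpha )$ to a state and commuting it past the $\mathfrak{q}_{1}$'s produces successive $\mathfrak{d}$-derivatives of $\mathfrak{q}_{1}$, and Corollary \ref{www} converts these derivatives into the Virasoro operators $\mathfrak{L}_{n}\he(\alpha )$ plus lower-order correction terms proportional to $c_{1}\he(X)$.

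\emph{Next}, the combinatorial heart of the argument is an induction on $n$ (equivalently, on the number of boxes / the weight in $\HHH$). One shows that any monomial $\mathfrak{q}_{i_{1}\he}\he(\alpha_{1})\cdots \mathfrak{q}_{i_{N}\he}\he(\alpha_{N})\,.\,1$ in the Nakajima basis of $\hb\bigl( \xn,\Q\bigr)$ can be rewritten as a polynomial in the $G_{i}\he(\alpha ,m)$ acting by cup product on classes of strictly smaller weight, up to terms already known to lie in the generated subring. The mechanism is exactly that cup product by $G_{k}\he(\alpha ,n)=\mathfrak{S}_{k}\he(\alpha )$ creates, via Proposition \ref{UN}, a $\mathfrak{q}_{1}^{(k)}$-factor; feeding in the relation of Theorem \ref{qqq} (through Corollary \ref{www}) lets one trade a high-index creation operator $\mathfrak{q}_{j}\he(\alpha )$ for a product of operators $\mathfrak{S}_{i}\he$ and lower creation operators. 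Because $i$ is constrained by $0\le i<n$, one must check that the range of available indices is sufficient to reach every needed monomial; this is where the degree bookkeeping of Definition \ref{5}(i) and the constraint on $i$ come into play.

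The \emph{main obstacle} I anticipate is precisely this combinatorial bookkeeping: verifying that the commutation relations close up so that no generator of index $i\ge n$ is ever required, and that the induction does not leak out of the subring. This is the step Lehn and Li--Qin--Wang handle with explicit generating-function manipulations (the vertex operators $S_{m}$ and their logarithmic derivatives), and it involves nontrivial identities among the $\mathfrak{q}$, $\mathfrak{L}$, $\mathfrak{d}$ and $\mathfrak{S}$ operators. Fortunately, as the paper emphasizes in the sentence preceding the statement, \emph{all} of these relations now hold in the symplectic almost-complex setting: the Heisenberg relations (Theorem \ref{Ch4suite2ThTrois}), the boundary/derivative relation (Theorem \ref{qqq}), the Virasoro identity (Corollary \ref{www}), and the $\mathfrak{S}$--$\mathfrak{q}_{1}$ commutator (Proposition \ref{UN}). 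Since the target statement is a purely algebraic consequence of these four relations in the abstract Fock space $\HHH$, the entire integrable-case argument of \cite{SchHilLQW2003} (their Theorem 2.1 and its corollaries) transfers \emph{formally}, with no further geometric input. My plan would therefore be to isolate the relevant combinatorial lemma from the literature, verify that its hypotheses are exactly the relations we have proved, and invoke it, rather than redoing the generating-function computation from scratch.
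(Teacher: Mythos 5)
Your proposal is correct and follows exactly the route the paper takes: the paper's entire proof consists of observing that Theorem \ref{6} is a formal consequence of Theorem \ref{qqq}, Corollary \ref{www} and Proposition \ref{UN}, with the nontrivial combinatorics delegated to the integrable-case references (\cite{SchHilLQWMA}, \cite{SchHilLQW2001}, \cite{SchHilLQW2003}). Your additional sketch of the weight induction and of the role of Corollary \ref{aas} is consistent with those references and adds useful detail, but it is the same argument.
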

This result was initially proved in \cite{SchHilLQWMA} using vertex algebras. For other proofs, see \cite{SchHilLQW2001} and \cite{SchHilLQW2003}.
\begin{theorem}\label{asd}
 For every integer $n$, the ring $H\ee\be\bigl( \xn,\Q\bigr)$ can be built by universal formulae from the ring $H\ee\be(X,\Q)$ and the first Chern class of $X$ in $H^{2}\be(X,\Q)$.
\end{theorem}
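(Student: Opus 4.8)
The plan is to realize $\HHH=\bigoplus_{n}H\ee\be\bigl(\xn,\Q\bigr)$ as a single algebraic object whose additive structure, boundary operator and multiplication operators are all governed by formulae involving \emph{only} the ring $H\ee\be(X,\Q)$ and the class $c_{1}\he(X)$, and then to read off each cup-product on $H\ee\be\bigl(\xn,\Q\bigr)$ from these operators. The first step is to fix a universal model once and for all. By Theorem \ref{Ch4suite2ThTrois} the Nakajima operators make $\HHH$ an irreducible Heisenberg representation with highest weight vector $1$, and the defining commutation relations involve nothing but the intersection pairing $\int_{X}\alpha\beta$. Uniqueness of such a representation then produces a canonical graded isomorphism between $\HHH$ and an abstract Fock space $\mathcal{F}\bigl(H\ee\be(X,\Q)\bigr)$ built functorially from the pairing, under which the $\mathfrak{q}_{i}\he(\alpha)$ become the standard creation/annihilation operators. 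All subsequent structures are transported to $\mathcal{F}$.

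Next I would show that the two families of operators entering Lehn's description of the ring are themselves universal on $\mathcal{F}$. For the boundary operator $\mathfrak{d}$: it kills $1$ (as $X^{[0]}\be$ is a point, so $c_{1}\he(\T^{[0]})=0$), while Corollary \ref{www} expresses every commutator $[\mathfrak{d},\mathfrak{q}_{n}\he(\alpha)]=\mathfrak{q}'_{n}(\alpha)$ through the Virasoro operators $\mathfrak{L}_{n}\he(\alpha)$ — which are normally-ordered quadratic expressions in the $\mathfrak{q}_{i}\he$, hence already universal on $\mathcal{F}$ — together with the term $\mathfrak{q}_{n}\he\bigl(c_{1}\he(X)\alpha\bigr)$. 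Since the $\mathfrak{q}_{n}\he$ generate $\mathcal{F}$ from $1$, knowing $\mathfrak{d}(1)$ and all these commutators pins $\mathfrak{d}$ down as a universal operator. For the multiplication operators $\mathfrak{S}_{k}\he(\alpha)$: they commute with $\mathfrak{d}$ (both are cup-products), their value on the vacuum is a universal constant (in fact zero, the $n=0$ tautological class vanishing), and Proposition \ref{UN} gives $[\mathfrak{S}_{k}\he(\alpha),\mathfrak{q}_{1}\he(\beta)]=\frac{1}{k!}\,\mathfrak{q}_{1}^{(k)}(\alpha\beta)$, whose right-hand side is built from $\mathfrak{q}_{1}\he$ and the now-universal $\mathfrak{d}$. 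Because $\mathfrak{q}_{1}\he(\beta)$ and $\mathfrak{d}$ already generate $\HHH$ from $1$ (Corollary \ref{aas}), these data determine each $\mathfrak{S}_{k}\he(\alpha)$ universally as well. Note that the ring structure of $H\ee\be(X,\Q)$ — and not merely its pairing — enters here, precisely through the cup-product terms $\alpha\beta$, $c_{1}\he(X)\alpha$ and the diagonal class hidden in $\mathfrak{L}_{n}\he(\alpha)$.

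It then remains to reconstruct the ring. The unit of $H\ee\be\bigl(\xn,\Q\bigr)$ is the universal vector $\frac{1}{n!}\,\mathfrak{q}_{1}\he(1)^{n}\,.\,1$, so the classes $G_{i}\he(\alpha,n)=\mathfrak{S}_{i}\he(\alpha)\bigl(\frac{1}{n!}\,\mathfrak{q}_{1}\he(1)^{n}\,.\,1\bigr)$ are universal vectors of $\mathcal{F}$. By Theorem \ref{6} these classes generate the ring $H\ee\be\bigl(\xn,\Q\bigr)$, and cup-product by $G_{i}\he(\alpha,n)$ is exactly the operator $\mathfrak{S}_{i}\he(\alpha)$ restricted to weight $n$. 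Hence for arbitrary $u,v\in H\ee\be\bigl(\xn,\Q\bigr)$ one first writes $u$ as a polynomial in the $G_{i}\he(\alpha,n)$ (a computation carried out entirely in the universal graded vector space $\mathcal{F}$) and then reads off $u\cup v$ by applying the corresponding word in the $\mathfrak{S}_{i}\he(\alpha)$ to $v$; the relations among generators are likewise detected as additive relations in $\mathcal{F}$. Every ingredient in this recipe — the Fock space, $\mathfrak{d}$, the operators $\mathfrak{S}_{i}\he(\alpha)$, the generating classes and their mutual relations — depends only on $H\ee\be(X,\Q)$ and $c_{1}\he(X)$, which is exactly the assertion.

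The main obstacle is not any single computation, since the hard correspondence calculations are already contained in Theorem \ref{qqq}, Proposition \ref{UN} and Corollary \ref{www}; rather it is the careful bookkeeping of \emph{universality} itself. One must make precise that each operator is uniquely determined by its value on the vacuum together with its commutators with a fixed generating set, verify that the recursion used to evaluate it terminates and is internally consistent, and check that the passage between $\HHH$ and the abstract Fock space never smuggles in data beyond the ring $H\ee\be(X,\Q)$ and the class $c_{1}\he(X)$. In other words, the work lies in upgrading the phrase \emph{formal consequence} into a genuine functorial construction; once this is done the statement follows.
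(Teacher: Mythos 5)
Your proposal is correct and follows essentially the same route as the paper, which states that Theorem \ref{asd} is a formal consequence of Theorem \ref{qqq}, Corollary \ref{www} and Proposition \ref{UN} and refers to \cite{SchHilLQW2003} for the details; what you have written is precisely an outline of that formal argument (transport to the Fock space, determination of $\mathfrak{d}$ and the $\mathfrak{S}_{k}\he(\alpha )$ by their vacuum values and commutators with a generating set, and reconstruction of the cup product from the generators $G_{i}\he(\alpha ,n)$ via Theorem \ref{6}).
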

\noindent For the proof as well as an effective statement, see \cite{SchHilLQW2003}.
\par\medskip 
There is a geometrical approach to the ring structure of $\hb\bigl( \xn,\Q\bigr)$ through orbifold cohomology. If $J$ is an adapted almost-complex structure on $X$, $\snx$ is an almost-complex Gorenstein orbifold. We can therefore consider the associated Chen-Ruan (or \emph{orbifold}) cohomology ring $H\ee_{\textrm{CR}}\bigl( \snx,\Q\bigr)$ which is $\Z$-graded  and depends only on the deformation class of $J$ (see \cite{SchHilCR}, \cite{SchHilALR}, \cite{SchHilFG}). 
\par\medskip 
After works by Lehn-Sorger and Li-Qin-Wang, Qin and Wang developed a set of axioms which characterize $H^{*}_{\textrm{CR}}\bigl( \snx,\Q\bigr)$ as a ring (see \cite{SchHilALR}):
\begin{theorem}\cite{SchHilQW}\label{QuinWang}
Let $A$ be a graded unitary ring and $(X,J)$ an almost-complex compact fourfold. We suppose that
\begin{enumerate}
 \item [(i)] $A$ is an irreducible $\hh\bigl( \hb(X,\C)\bigr)$-module and $1$ is a highest weight vector.
\item[(ii)] For all $\alpha $ in $\hb(X,\C)$ and for all $i$ $\N$, there exist classes $O_{i}\he(\alpha ,n)\in A^{|\alpha |+2i}\be$ such that if $\mathfrak{D}_{i}\he(\alpha )$ is the operator of product by 
$\bop_{n}\he O_{i}\he(\alpha ,n)$ and $\mathfrak{D}_{1}\he(1)=\mathfrak{d}$ is the derivation,
\begin{enumerate}
\item[(1)] $\forall \alpha ,\beta \in\hb(X,\C)$, $\forall k\in\N$,\quad $\bigl[ \mathfrak{D}_{k}\he(\alpha ),\mathfrak{q}_{1}\he(\beta  )\bigr]=\mathfrak{q}_{1}^{(k)}(\alpha \beta )$.
\item[(2)] If $\delta _{X}\he$ is the class in 
$H\ee\be(X,\Q)^{\oti 3}\be$ mapped by the K\"{u}nneth isomorphism to the cycle class of the diagonal in $X^{3}\be$, $\ds\sum_{l_{1}\he+l_{2}\he+l_{3}\he=0}:\mathfrak{q}_{l_{1}\he} \mathfrak{q}_{l_{2}\he} \mathfrak{q}_{l_{3}\he}\!\!:(\delta _{X}\he)=-6\, \mathfrak{d}$. 
\end{enumerate}
\end{enumerate}
 Then $A$ is isomorphic as a ring to  $H\ee_{\emph{CR}}\bigl( \snx,\C\bigr)$.
\end{theorem}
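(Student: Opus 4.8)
The plan is to establish an axiomatic characterization in two halves: that the Chen--Ruan ring $H^{*}_{\mathrm{CR}}(\snx,\C)$ is itself a model of the data (i)--(ii), and that these data determine the multiplication on any graded ring $A$ carrying them uniquely up to isomorphism. The first half rests on the orbifold-side computations of the Chen--Ruan product and its Heisenberg module structure (see \cite{SchHilFG}, \cite{SchHilUr}); granting it, both $A$ and $H^{*}_{\mathrm{CR}}(\snx,\C)$ are irreducible highest-weight modules over $\hh\bigl(\hb(X,\C)\bigr)$, and the theorem reduces to transporting the ring structure along the resulting module isomorphism.

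First I would fix the module identification. By hypothesis (i) and the uniqueness of an irreducible representation of $\hh\bigl(\hb(X,\C)\bigr)$ with highest weight vector $1$, there is a unique isomorphism of graded modules $\psi\colon A\to H^{*}_{\mathrm{CR}}(\snx,\C)$ with $\psi(1)=1$, and $\psi$ intertwines every Nakajima operator $\mathfrak{q}_{m}(\gamma)$. The remaining task is to show that $\psi$ also intertwines the operators of multiplication, and is therefore a morphism of rings.

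The crucial point is that relation (2) expresses the derivation $\mathfrak{d}=\mathfrak{D}_{1}(1)$ as a fixed normally-ordered cubic polynomial in the operators $\mathfrak{q}_{m}$ evaluated on the diagonal class $\delta_{X}$; since $\psi$ intertwines the $\mathfrak{q}_{m}$, it intertwines $\mathfrak{d}$, hence every derivative $\mathfrak{q}_{1}^{(k)}(\gamma)=\mathrm{ad}(\mathfrak{d})^{k}\mathfrak{q}_{1}(\gamma)$. Next I would pin down each multiplication operator $\mathfrak{D}_{k}(\alpha)$. Being left multiplications on a commutative ring, the $\mathfrak{D}_{k}(\alpha)$ commute with one another and with $\mathfrak{d}$; relation (1) fixes their commutator with every $\mathfrak{q}_{1}(\beta)$; and the grading of the lowest sector $A_{0}=\C$ forces $\mathfrak{D}_{k}(\alpha)\cdot 1=0$ outside the trivial case. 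By Corollary \ref{aas}, every element of $A$ is obtained from $1$ by a word in the letters $\mathfrak{q}_{1}(\beta)$ and $\mathfrak{d}$; commuting $\mathfrak{D}_{k}(\alpha)$ rightward through such a word---using (1) at each $\mathfrak{q}_{1}$-letter, whose error term is an already-determined derivative $\mathfrak{q}_{1}^{(k)}$, and commutativity at each $\mathfrak{d}$-letter---reduces the computation to the known value on $1$. Thus each $\mathfrak{D}_{k}(\alpha)$ is uniquely determined by the data, hence intertwined by $\psi$. Finally, exactly as in Theorem \ref{6}, the classes $O_{i}(\alpha,n)$ generate each graded sector as a ring, so every left multiplication is a polynomial in the $\mathfrak{D}_{k}(\alpha)$; consequently $\psi$ respects all products and is a ring isomorphism.

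The main obstacle, apart from the input that Chen--Ruan cohomology realizes the axioms, is the recursion of the previous paragraph: one must verify that the commutation procedure terminates and leaves no residual ambiguity, i.e.\ that no nonzero operator kills $1$ while commuting with all $\mathfrak{q}_{1}(\beta)$ and with $\mathfrak{d}$. This is precisely where Corollary \ref{aas} is indispensable, for the commutators with $\mathfrak{q}_{1}$ alone would not pin down a multiplication operator were it not that $\mathfrak{q}_{1}$ and $\mathfrak{d}$ already generate the whole Fock space from $1$.
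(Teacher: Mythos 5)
The paper does not actually prove this statement: it is quoted from Qin--Wang \cite{SchHilQW} and used as a black box in the proof of Theorem \ref{TROIS}, so there is no internal argument to compare yours against. That said, your sketch is a faithful reconstruction of the argument of the cited reference: transport the ring structure along the unique Heisenberg-module isomorphism fixing the vacuum; use (2) to see that $\mathfrak{d}$, hence every derivative $\mathfrak{q}_{1}^{(k)}(\gamma)=\mathrm{ad}(\mathfrak{d})^{k}\mathfrak{q}_{1}\he(\gamma)$, is intertwined; use (1), the vanishing $\mathfrak{D}_{k}\he(\alpha)\cdot 1=0$, and the generation statement of Corollary \ref{aas} to pin down each multiplication operator; and invoke the abstract analogue of Theorem \ref{6} to conclude that all products are determined. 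The one substantive caveat is that the whole theorem rests on the half you explicitly ``grant'' --- that $H\ee_{\textrm{CR}}\bigl(\snx,\C\bigr)$ itself carries classes $O_{i}\he(\alpha,n)$ satisfying (i) and (ii) --- which is precisely the computational content of \cite{SchHilQW}, \cite{SchHilLeSoBis}, \cite{SchHilLQWMA}, \cite{SchHilFG}, \cite{SchHilUr}, and is not established in the present paper either; your proposal would not stand alone without it. Two minor points worth fixing: the mutual commutativity of the $\mathfrak{D}_{k}\he(\alpha)$ is neither needed nor literally true when $|\alpha|$ is odd (left multiplications by odd-degree elements only supercommute) --- what your recursion actually uses is $[\mathfrak{D}_{k}\he(\alpha),\mathfrak{d}]=0$, which holds because $\mathfrak{d}$ is multiplication by even-degree classes; and Corollary \ref{aas} as stated concerns $\HHH$, so the generation of $A$ from $1$ by $\mathfrak{d}$ and the $\mathfrak{q}_{1}\he(\beta)$ must be rederived in $A$ from the Heisenberg relations together with the expression of $\mathfrak{d}$ furnished by (2), which is routine but should be said.
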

In $(2)$, we used the physicists' normal ordering convention
\[
:\mathfrak{q}_{l_{1}\he} \mathfrak{q}_{l_{2}\he} \mathfrak{q}_{l_{3}\he}\!\!:\,=
\mathfrak{q}_{l_{1}'}\he \mathfrak{q}_{l_{2}'}\he \mathfrak{q}_{l_{3}'}\he,\ \textrm{where}\ 
\{l_{1}\he,l_{2}\he,l_{3}\he\}=\{l_{1}',l_{2}',l_{3}'\}\ \textrm{and}\ l'_{1}\le l_{2}'\le l_{3}'.
\]
We apply this theorem to prove Ruan's conjecture for the symmetric products of a symplectic fourfold with torsion first Chern class.
\begin{theorem}\label{TROIS}
 Let $(X,\omega )$ be a symplectic compact fourfold with vanishing first Chern class in $H^{2}\be(X,\Q)$. Then Ruan's crepant conjecture holds for $\snx$, i.e. the rings $\hb\bigl( \xn,\Q\bigr)$ and $H\ee_{\emph{CR}}\bigl( \snx,\Q\bigr)$ are isomorphic.
\end{theorem}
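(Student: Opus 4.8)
The plan is to apply the Qin--Wang characterization of Chen--Ruan cohomology (Theorem \ref{QuinWang}) to the graded ring $A=\HHH=\bop\nolimits_{n\ge 0}\he H\ee\be\bigl(\xn,\C\bigr)$, equipped with its Heisenberg action, and then to read off the isomorphism with $H\ee_{\textrm{CR}}\bigl(\snx,\C\bigr)$. Three things must be verified: the irreducibility of the Heisenberg module with highest weight vector $1$, the existence of product operators $\mathfrak{D}_{k}$ with the prescribed bracket against $\mathfrak{q}_{1}$, and the cubic relation on the diagonal class. It is only in the last of these that the hypothesis $c_{1}\he(X)=0$ will be used.

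Axiom (i) is exactly the final assertion of Theorem \ref{Ch4suite2ThTrois}, valid for any almost-complex compact fourfold. For axiom (ii), I would set $O_{i}\he(\alpha ,n)=i!\, G_{i}\he(\alpha ,n)$, with $G_{i}\he(\alpha ,n)$ the virtual tautological components of Proposition \ref{Ch4SuitePropUn}, so that $\mathfrak{D}_{i}\he(\alpha )=i!\,\mathfrak{S}_{i}\he(\alpha )$ in the notation of Definition \ref{5}. The normalization $\mathfrak{D}_{1}\he(1)=\mathfrak{d}$ then holds because $G_{1}\he(1,n)$ is the degree-two part of $\ch\bigl(\tn\bigr)$, namely $c_{1}\he\bigl(\tn\bigr)$, and cup product with this class is the boundary operator of Definition \ref{ah}. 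Condition (1) of axiom (ii) is immediate from Proposition \ref{UN}: the normalising factor $k!$ in $\mathfrak{D}_{k}$ precisely absorbs the $1/k!$ appearing there, giving $\bigl[\mathfrak{D}_{k}\he(\alpha ),\mathfrak{q}_{1}\he(\beta )\bigr]=\mathfrak{q}_{1}^{(k)}(\alpha \beta )$. This step uses neither $c_{1}\he(X)=0$ nor the symplectic form beyond what is needed for Proposition \ref{UN}.

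The heart of the proof is condition (2) of axiom (ii): one must show $\sum_{l_{1}+l_{2}+l_{3}=0}:\!\mathfrak{q}_{l_{1}}\mathfrak{q}_{l_{2}}\mathfrak{q}_{l_{3}}\!:(\delta _{X}\he)=-6\,\mathfrak{d}$. Both operators preserve the grading and annihilate the vacuum $1$ (for the left-hand side because, after normal ordering, the constraint $\sum l_{i}=0$ forces an annihilation factor on the right while the all-zero term vanishes since $\mathfrak{q}_{0}=0$; for the right-hand side because $\mathfrak{d}\cdot 1=0$). By Corollary \ref{aas} the ring is generated from $1$ by $\mathfrak{d}$ and the operators $\mathfrak{q}_{1}\he(\gamma )$, so it suffices to compare the brackets of the two sides against these generators. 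The bracket against $\mathfrak{q}_{1}\he(\gamma )$ is a formal Heisenberg computation that yields $-6\,\mathfrak{L}_{1}\he(\gamma )$ on the left, while on the right $\bigl[-6\,\mathfrak{d},\mathfrak{q}_{1}\he(\gamma )\bigr]=-6\,\mathfrak{q}'_{1}(\gamma )=-6\,\mathfrak{L}_{1}\he(\gamma )$ by Corollary \ref{www} (the correction term there carries the factor $|n|-1$, which vanishes for $n=1$); thus the two agree, independently of $c_{1}\he(X)$.

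The bracket against $\mathfrak{d}$ is the main obstacle and the only place where $c_{1}\he(X)=0$ is genuinely needed. Expanding $\bigl[\mathfrak{d},\sum:\!\mathfrak{q}\mathfrak{q}\mathfrak{q}\!:(\delta _{X}\he)\bigr]$ by the Leibniz rule and inserting the derivative formula $\mathfrak{q}'_{n}(\alpha )=n\,\mathfrak{L}_{n}\he(\alpha )-\frac{1}{2} n(|n|-1)\mathfrak{q}_{n}\he(c_{1}\he(X)\alpha )$ of Corollary \ref{www} produces two kinds of terms: a Virasoro-type quartic expression that I expect to cancel by a formal identity valid for every $X$, and a family of terms proportional to $c_{1}\he(X)$. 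These latter terms are exactly the discrepancy between the Hilbert-scheme cubic relation and the Chen--Ruan one, and they vanish identically once $c_{1}\he(X)=0$ in $H^{2}\be(X,\Q)$, giving $\bigl[\mathfrak{d},\sum:\!\mathfrak{q}\mathfrak{q}\mathfrak{q}\!:(\delta _{X}\he)\bigr]=0=\bigl[\mathfrak{d},-6\,\mathfrak{d}\bigr]$. Having matched both sides on $1$ and against all generators, I conclude that condition (2) holds. Theorem \ref{QuinWang} then furnishes a ring isomorphism $A\simeq H\ee_{\textrm{CR}}\bigl(\snx,\C\bigr)$; and since the Heisenberg operators, the classes $G_{i}\he(\alpha ,n)$ and the diagonal class $\delta _{X}\he$ are all defined over $\Q$ and the isomorphism matches $\Q$-rational generators, it descends to the asserted isomorphism of graded $\Q$-algebras $\hb\bigl(\xn,\Q\bigr)\simeq H\ee_{\textrm{CR}}\bigl(\snx,\Q\bigr)$.
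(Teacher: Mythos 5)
Your proposal is correct and follows essentially the same route as the paper: apply the Qin--Wang axiomatic characterization (Theorem \ref{QuinWang}) with $O_{k}\he(\alpha ,n)=k!\,\mathfrak{S}_{k}\he(\alpha ,n)$, get axiom (ii)(1) from Proposition \ref{UN}, and deduce (ii)(2) formally from the Nakajima relations together with $\bigl[\mathfrak{q}'_{n}(\alpha ),\mathfrak{q}_{m}\he(\beta )\bigr]=-nm\,\mathfrak{q}_{n+m}\he(\alpha \beta )$ and $\mathfrak{q}'_{n}(\alpha )=n\,\mathfrak{L}_{n}\he(\alpha )$, which is exactly where $c_{1}\he(X)=0$ enters. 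Your expansion of the verification of (2) (vacuum plus commutators against the generators $\mathfrak{d}$ and $\mathfrak{q}_{1}\he(\gamma )$ via Corollary \ref{aas}) is a legitimate unpacking of what the paper dismisses as ``a formal consequence,'' and the one step you leave as expected cancellation is asserted at the same level of detail in the paper itself.
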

\begin{proof}
 Let $O_{k}\he(\alpha ,n)=k!\, \mathfrak{S}_{k}\he(\alpha ,n)$. Then (1) is exactly Proposition \ref{UN}. The relation (2) is a formal consequence of the Nakajima relations and of the formulae $\bigl[ \mathfrak{q}_{n}'(\alpha ),\mathfrak{q}_{m}^{\vphantom{A}}(\beta )\bigr]=-nm\, \mathfrak{q}_{n+m}^{\vphantom{A}}(\alpha \beta )$, 
$\mathfrak{q}'_{n}(\alpha )=n\, \mathfrak{L}_{n}\he(\alpha )$. 
\end{proof}

\section{The cobordism class of $X^{[n]}$}\label{SecCinq}
In this section, $(X,J)$ is an almost-complex compact fourfold, and no symplectic hypotheses are required. The almost-complex Hilbert schemes $\xn$ are endowed with a stable almost complex structure (see \cite{SchHilVo1}), hence define almost-complex cobordism classes. By a fundamental result of
Novikov \cite{SchHilNo} and Milnor \cite{SchHilMi}, the almost-complex cobordism class of $\xn$ is completely determined by the Chern numbers $\ds\int_{\xn}P\bigl[ c_{1}\he(\xn),\dots,c_{2n}\he(\xn)\bigr]$ where $P$ runs through all polynomials $P$ in $\Q\bigl[ T_{1}\he,\dots,T_{2n}\he\bigr]$ of weighted degree $4n$, each variable $T_{k}\he$ having degree $2k$. We intend to prove the following result:
\begin{theorem}\label{QUATRE}
 The almost-complex cobordism class of $\xn$ depends only on the almost-complex cobordism class of $X$.
\end{theorem}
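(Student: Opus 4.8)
The plan is to adapt the Ellingsrud--Göttsche--Lehn argument \cite{SchHilEGL} to the almost-complex setting, replacing the algebraic geometry on the singular symmetric products by the relative coherent sheaf machinery over the singular differentiable base $\snx$. By the Novikov--Milnor theorem quoted above, it suffices to show that every Chern number $\int_{\xn} P\bigl[ c_1\he(\xn),\dots,c_{2n}\he(\xn)\bigr]$ is a universal polynomial, independent of $X$, in the two Chern numbers $\int_X c_1\he(X)^2$ and $\int_X c_2\he(X)$. First I would reduce every such integral to an integral of tautological Chern characters. Voisin's stable almost-complex structure \cite{SchHilVo1} determines the class $[T_{\xn}]$ in $K\bigl(\xn\bigr)\otimes\Q$, and the relative analogue of the tangent-bundle formula of \cite{SchHilEGL} expresses this class as a universal $K$-theoretic expression in the tautological bundles $\oo_X^{[n]}$ and $T_X^{[n]}$. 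Applying the Chern character and the degeneration of the Atiyah--Hirzebruch spectral sequence exactly as in Lemma \ref{Ch4SuiteLemUn}, every $c_i\he(\xn)$, hence every $P\bigl[c_\bullet(\xn)\bigr]$, becomes a universal polynomial in the tautological Chern characters $G(\ch(\oo_X),n)$ and $G(\ch(T_X),n)$ of Proposition \ref{Ch4SuitePropUn}.

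It therefore suffices to prove that for fixed homogeneous classes $\alpha_1,\dots,\alpha_r$ in $\hb(X,\Q)$ the integral $\int_{\xn}\prod_{j=1}^{r} G(\alpha_j,n)$ is a universal polynomial in the intersection numbers $\int_X \alpha_{j_1}\cdots\alpha_{j_s}$ and in the Chern numbers of $X$. Since the relevant $\alpha_j$ are the Chern-character components of $\oo_X$ and $T_X$, those intersection numbers are themselves polynomials in $\int_X c_1\he(X)^2$ and $\int_X c_2\he(X)$, so that the whole computation will indeed depend on $X$ only through its cobordism class. The base case $n=1$ is immediate since $X^{[1]}\be=X$ and $G(\alpha,1)=\alpha$, so the integral is a genuine intersection number on $X$.

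The heart of the argument is an induction on $n$ driven by the incidence variety $\xna{n+1,n}$ and its maps $\lambda$, $\nu$, $\rho$. For the inductive step I would use the identity $\nu_{*}\he\nu\ee=\frac{1}{n+1}\id$ established in the proof of Lemma \ref{Ch4SuiteLemUn} to write $\int_{X^{[n+1]}\be}\prod_j G(\alpha_j,n+1)=(n+1)\int_{\xna{n+1,n}}\prod_j \nu\ee\be G(\alpha_j,n+1)$, and then substitute the fundamental relation $\nu\ee\be G(\alpha_j,n+1)=\lambda\ee\be G(\alpha_j,n)+\rho\ee\be\alpha_j\cdot c_1\he(F)$ of Proposition \ref{Ch4SuitePropUn}. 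Expanding the product and pushing forward along $\lambda$ by the projection formula, the powers of $c_1\he(F)$ together with the classes $\rho\ee\be\alpha_j$ produce Segre-type contributions $\lambda_{*}\bigl[c_1\he(F)^k\cdot\rho\ee\be(\cdots)\bigr]$, which a relative Grothendieck--Riemann--Roch computation for the relative incidence sheaf $\ore{n}$ over $\snx$, using the relative form of \cite{SchHilAtHi}, rewrites as universal combinations of the classes $G(\cdot,n)$ on $\xn$ and of classes pulled back from $X$. This reduces the level-$(n+1)$ integral to integrals over $\xn$ and over $\xn\tim X$, which are controlled by the induction hypothesis, the universality of the correction terms ensuring that no information beyond the Chern numbers of $X$ is introduced.

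The hard part will be carrying out these pushforward and Riemann--Roch computations in the relative framework. Because coherent sheaves have no direct almost-complex analogue on $\xn$, one cannot manipulate the tautological sheaves globally; instead every identity must be formulated for relatively coherent sheaves on the smooth analytic families $W^{[n]}_{\re}\tim_{\snx}\he W$ fibered over the singular base $\snx$, with classes living in the projective-limit groups $\hf _{\zg}\ee(\xg,\Q)$ introduced in Section \ref{decadix}. The two delicate points are (i) establishing the relative tangent-bundle formula and its compatibility with Voisin's stable structure, and (ii) verifying that the excess terms $\lambda_{*}\bigl[c_1\he(F)^k\cdots\bigr]$ produced at each step are genuinely universal. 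Granting the relative calculus of the appendix, this second point should follow because $\xna{n+1,n}$ is locally modelled on the integrable model $U^{[n+1,n]}$, so the coefficients of the recursion are computed once and for all on a fixed local model and are thus independent of $X$.
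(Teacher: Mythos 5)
Your overall architecture --- the Novikov--Milnor reduction to Chern numbers, the induction through the incidence variety $\xna{n+1,n}$, and the replacement of coherent sheaves by relatively coherent sheaves over $\snx$ --- is the same as the paper's. But there is a genuine gap in your first reduction step. You claim that $[T\xn]$ is a universal $K$-theoretic expression in the tautological bundles $\oo_X^{[n]}$ and $T_X^{[n]}$, so that every $c_i\he(\xn)$ becomes a universal polynomial in the classes $G(\ch(\oo_X),n)$ and $G(\ch(T_X),n)$. Nothing in Lemma \ref{Ch4SuiteLemUn} or Proposition \ref{Ch4SuitePropUn} gives this: the tangent bundle formula (Proposition \ref{CinqNeuf}) reads $\kappa_{n}\he=p_{*}\he\bigl(\oo_{n}^{\re}+\oo_{n}^{\re\vee}-\oo_{n}^{\re}\,.\,\oo_{n}^{\re\vee}\bigr)$, and only the first summand $p_{*}\he\oo_{n}^{\re}$ is a tautological bundle; the dual term and, above all, the self-product term $p_{*}\he\bigl(\oo_{n}^{\re}\,.\,\oo_{n}^{\re\vee}\bigr)$ admit no a priori universal tautological expression. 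This is precisely why the paper, following \cite{SchHilEGL}, never reduces to tautological integrals but instead compares $T\xn$ and $TX^{[n+1]}\be$ directly on the incidence variety (Proposition \ref{SansLabBis}) and keeps $c_i\he(\xn)$ itself as a primitive variable of the induction.

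A second, related problem is that your induction does not close on the set of integrals $\int_{\xn}\prod_j G(\alpha_j,n)$. After expanding $\nu\ee\be G(\alpha_j,n+1)=\lambda\ee\be G(\alpha_j,n)+\rho\ee\be\alpha_j\,.\,c_1\he(F)$ and pushing forward by $\sigma=(\lambda,\rho)$, the powers of $c_1\he(F)$ produce $\sigma_{*}\he(l^{\,k})=(-1)^{k}\mu_{k,n}\he$, a class on $\xn\tim X$ (the Chern class of the incidence sheaf, Proposition \ref{d}) which is \emph{not} a sum of products of a class pulled back from $\xn$ with a class pulled back from $X$ in any universal way: extracting its K\"unneth components to return to tautological classes would bring in dual bases for the intersection form of $\hb(X,\Q)$, which is not a cobordism invariant, and universality would be lost. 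The paper's way out (Propositions \ref{LaDer} and \ref{11}) is to enlarge the system of integrands to polynomials in $c_i\he\bigl(\xna{n+1}\bigr)$, $\mu_{i,n+1}\he$, the diagonal classes $d_i\he$ and $c_i\he(X)$ on $\xna{n+1}\tim X^{m}\be$, and to descend one level at a time to $\xn\tim X^{m+1}\be$, the recursion for the $\mu_{i,n}\he$ themselves being Proposition \ref{LaDer} (i). Your local-model argument for the universality of the excess contributions is correct in spirit, but it must be applied to this larger system of classes, not to the tautological Chern characters alone.
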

This means that if $P$ is a weighted polynomial in $\Q\bigl[ T_{1}\he,\dots,T_{2n}\he\bigr]$ of degree $4n$, there exists a weighted polynomial $\ti{P}\bigl[ T_{1}\he,T_{2}\he\bigr]$ of degree $4$, depending only on $P$ and $n$, such that 
\[
\int_{\xn}P\bigl[ c_{1}\he(\xn),\dots,c_{2n}\he(\xn)\bigr]=\int_{X}\ti{P}\bigl[ c_{1}\he(X),c_{2}\he(X)\bigr].
\]
This result has been proved by Ellinsgrud, G\"{o}ttsche and Lehn in 
\cite{SchHilEGL} when $X$ is projective. Let us describe briefly the main steps of the argument and the tools we need.
\par\medskip 
Let $J_{\vphantom{n}}^{\re}$ be a relative integrable structure in a \nbh\ $W$ of $Z_{n}\he$. Then the relative Hilbert scheme $\bigl( W^{[n]}_{\re},\jre\bigr)$ is fibered in smooth analytic spaces over $\snx$, so that we can consider its relative tangent bundle $T^{\re}\be\,W_{\re}^{[n]}$ which is a continuous complex vector bundle on $W^{[n]}_{\re}$. It will turn out that the class $\bigl[ T^{\re}\be\,W_{\re}^{[n]}\bigr]_{\vert\xn}\he$ in $K(\xn)$ is exactly the class of the tangent bundle $T\xn$, where $\xn$ is endowed with the differentiable and stable almost-complex structures constructed by Voisin in \cite{SchHilVo1}. Therefore, the \mbox{$K$-theory} class of $T\xn$ can be understood via the tangent bundle of classical Hilbert schemes.  
\par\medskip 
The main idea in \cite{SchHilEGL} is to relate $X^{[n+1]}\be$ and $\xn\tim X$ via the smooth incidence Hilbert scheme $X^{[n+1,n]}\be$. The essential step in their argument is the explicit comparison in $K\bigl( X^{[n+1,n]}\be\bigr)$ of the classes of $T\xn$ and $T X^{[n+1]}\be$. This is carried out using the explicit description of $T\xn$ as $\pr_{1*}\he\hh om\bigl( \jj_{n}\he,\oo_{n}\he\bigr)$, where $\jj_{n}\he$ is the ideal sheaf of the incidence locus $\yn \suq\xn\tim X$ and $\oo_{n}\he$ is the structure sheaf of $\yn $. So it appears necessary to consider coherent sheaves and not only locally free ones. 
\par\medskip 
In the almost-complex setting, the coherent sheaves have no equivalent. Instead of working directly on the almost-complex Hilbert schemes and the associated incidence varieties, we use the corresponding relative objects, which can be considered as homotopically equivalent to the original ones, but possess a much stronger structure: they are fibered in analytic spaces over a singular basis. Each fiber consists of the initial object (Hilbert scheme, incidence variety,...) associated to an open set of $X$ with an integrable structure on it.
The almost-complex Hilbert scheme $\xn$ will be for instance replaced by the fibration $W^{[n]}_{\re}$ over $\snx$ associated to a relative integrable complex structure $\jre$: the corresponding fibers are the integrable Hilbert schemes $\bigl( W_{\xb}^{[n]},J^{\re}_{\xb}\bigr)_{\xb\in\snx}\he$.
\par\medskip 
In the appendix (Section \ref{AppendCh4}), we develop a general formalism for relative coherent sheaves on spaces $\xg/B$ fibered in smooth analytic sets over a differentiable basis $B$ with quotient singularities, such as $W^{[n]}_{\rel}$. These \rsas s carry a sheaf $\oo_{\xg}^{\, \rel}$ which is the sheaf of smooth functions holomorphic in the fibers (see Definition \ref{DefUnAppenCh4}).
\par
Intuitively, a relatively coherent sheaf $\ff$ on a \rsas\ $\xg$ over $B$ is a family of coherent sheaves
$\bigl( \ff_{b}\he\bigr)_{b\in B}\he$ on $\bigl( \xg_{b}\he\bigr)_{b\in B}\he$ varying smoothly with $b$ and locally trivially on $\xg$. 
If we take relative holomorphic coordinates on $\xg$, the local model for $\xg$ is $Z\tim V$, where $Z$ is a smooth analytic set and $V$ is an open subset of the base $B$. Then the local model for a relatively coherent sheaf on $Z\tim V$ is $\pr_{1}^{-1}\g\oti_{\pr_{1}^{-1}\oo_{Z}\he}\he \ore{Z\tim V}$, where $\g$ is a coherent analytic sheaf on $Z$  (see Definition \ref{DefQuatreAppendCh4}). 
\par
The usual operations on coherent sheaves (such as internal $\textrm{Hom}$, tensor product, dual,
pull-back, push-forward and the associated derived operations) can be performed on relatively cohe\-rent sheaves for smooth morphisms holomorphic in the fibers satisfying some triviality conditions (see Definitions \ref{DefDeuxAppenCh4} and \ref{DefSixAppendCh4}). For the push-forward, we will only have to deal with the situation where the map is finite on the support of the \sh, which is technically much simpler than the general case. 
\par
In this context, it is possible to construct a relative version of the usual analytic $K$-theory for coherent \sv\ (see Definitions \ref{DefNeufAppendCh4} and \ref{DefUnBisAppendCh4}) as well as associated operations.
\par
This being done, it will be necessary to consider relatively coherent sheaves as elements in topological \mbox{$K$-theory}. This is achieved by the following proposition:
\begin{proposition}\label{SansLab}
If $\ff$ is a relatively coherent sheaf on a relative smooth analytic space $\xg$ over $B$ and $\xg'$ is relatively compact in $\xg$, then $\ff^{\, \infty }:=\ff\oti_{\ore{\xg}}\he\ci_{\xg}$ admits a resolution on 
$\xg'$ by complex vector bundles $\bigl( E_{i}\he\bigr)_{1\le i\le N}\he$. Besides, the element 
$[\ff^{\, \infty }\be]:=\sum_{i=1}^{N}(-1)^{i-1}[E_{i}\he]$ in $K(\xg')$ is independent of 
$E_{\bullet}\he$.
\end{proposition}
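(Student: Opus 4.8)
The plan is to reduce everything to the local model of Definition~\ref{DefQuatreAppendCh4}, where the passage from the holomorphic--in--the--fibres structure to the smooth one is exact, to produce there a finite free $\ci_{\xg}$-resolution of $\ff^{\,\infty}$ of uniformly bounded length, and then to globalize over $\overline{\xg'}$ and prove uniqueness by homological algebra over the soft sheaf $\ci_{\xg}$. First I would recall the local picture: near any point $\xg\cong Z\tim V$ with $Z$ a smooth analytic set (an open subset of some $\C^{d}$, so $d=2$ in the cases of interest) and $V$ open in $B$, one has $\ff\cong\pru\g\otimes_{\pru\oo_{Z}}\ore{\xg}$ for a coherent analytic sheaf $\g$ on $Z$. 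Since $Z$ is smooth, the local ring $\oo_{Z,z}$ is regular of dimension $d$, so by Hilbert's syzygy theorem $\g$ admits, on a smaller Stein neighbourhood, a free resolution $0\to\oo_{Z}^{a_{d}}\to\cdots\to\oo_{Z}^{a_{0}}\to\g\to 0$ of length $\le d$. Pulling back and tensoring over $\pru\oo_{Z}$ yields an $\ore{\xg}$-free resolution of $\ff$ of the same length; tensoring further with $\ci_{\xg}$ over $\ore{\xg}$ and invoking the flatness of $\ci_{\xg}$ over $\ore{\xg}$ established in the appendix (the relative analogue of Malgrange's theorem), this smooths to a local free $\ci_{\xg}$-resolution of $\ff^{\,\infty}$ of length $\le d$.

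Thus $\ff^{\,\infty}$ is everywhere locally finitely presented over $\ci_{\xg}$ and admits local finite free resolutions whose length is bounded by the fibre dimension $d$, uniformly on the compact set $\overline{\xg'}$. I would then globalize by downward induction on $d$ using softness. For finite generation, cover $\overline{\xg'}$ by finitely many charts on which $\ff^{\,\infty}$ is generated by finitely many sections and patch these with a smooth partition of unity to obtain a global surjection $E_{1}=\ci_{\xg}^{r_{1}}\to\ff^{\,\infty}$ over $\xg'$; this is legitimate because locally free $\ci_{\xg}$-modules are fine, hence acyclic on the paracompact space $\xg'$, so a sheaf surjection is surjective on global sections. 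Set $\kk=\ke(E_{1}\to\ff^{\,\infty})$. By Schanuel's lemma, locally $\kk$ is isomorphic to a first syzygy of $\ff^{\,\infty}$ up to a free summand, hence is again locally finitely presented over $\ci_{\xg}$ and admits local finite free resolutions of length $\le d-1$. Iterating at most $d$ times, the kernel at the final stage is a finitely generated locally free $\ci_{\xg}$-module, i.e.\ a complex vector bundle $E_{N}$; splicing the successive presentations produces the desired finite resolution $0\to E_{N}\to\cdots\to E_{1}\to\ff^{\,\infty}\to 0$ on $\xg'$.

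For the independence of the class $\sum_{i=1}^{N}(-1)^{i-1}[E_{i}]$ in $K(\xg')$, I would use that over the fine sheaf $\ci_{\xg}$ locally free modules are projective relative to surjections: the relevant $\mathcal{H}om$ sheaves are soft, so the comparison theorem applies and lifts $\id_{\ff^{\,\infty}}$ to a morphism of complexes between any two finite locally free resolutions $E_{\bullet}$ and $F_{\bullet}$. This morphism is a quasi-isomorphism, so its mapping cone is a bounded acyclic complex of complex vector bundles on $\xg'$; such a complex is contractible (one splits off the identity differentials inductively, using partitions of unity), whence its alternating sum vanishes in $K(\xg')$. Therefore $\sum_{i}(-1)^{i-1}[E_{i}]=\sum_{i}(-1)^{i-1}[F_{i}]$, and the class $[\ff^{\,\infty}]$ is well defined.

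The main obstacle is the exactness input: one must know that a free resolution that is holomorphic in the fibres survives, still exact, the passage to the smooth category, which is exactly the flatness of $\ci_{\xg}$ over $\ore{\xg}$ supplied by the appendix. Once this is granted, the uniform bound on the homological dimension follows from the regularity of the fibre local rings, and the remaining existence and uniqueness arguments are standard soft-sheaf homological algebra carried out with smooth partitions of unity.
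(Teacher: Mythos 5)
Your proof is correct, and for the existence half it follows essentially the same route as the paper: the local model $\pru\baf\oti_{\pru\oz }\ore{Z\tim V}$ is resolved by pulling back a finite free $\oo_{Z}$-resolution of the coherent sheaf $\baf$ and tensoring up to the smooth category, the exactness of the result being exactly the flatness statement of Lemma \ref{LemUnAppenCh4} (ii) --- note that what the appendix actually proves is flatness of $\ci_{Z\tim V}$ over $\pru\oz $, not over $\ore{Z\tim V}$, but this is precisely what is needed since $\ff^{\,\infty}$ is locally $\pru\baf\oti_{\pru\oz }\ci_{Z\tim V}$; your globalization by cut-off functions and the descent on the length of the local resolutions of the kernel (your Schanuel step) is Proposition \ref{NouvProp} (i) almost verbatim, with the syzygy bound $\le d$ a harmless refinement the paper does not need. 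Where you genuinely diverge is the uniqueness statement. The paper (Proposition \ref{NouvProp} (ii)) shows that any two finite locally free resolutions near a compact set are sub-resolutions of a common third one with locally free quotients, built inductively from the splitting Lemma \ref{Utile}; the equality of the alternating sums in $K(\xgp)$ then follows from the resulting short exact sequences of vector bundles. You instead lift the identity to a chain map by the comparison theorem and contract the mapping cone. Both arguments rest on the same vanishing --- $\textrm{Ext}^{1}$ out of a locally free $\ci$-module is killed by fineness of the relevant $\hh om$ sheaves, which is exactly the content of Lemma \ref{Utile} --- so your route is a legitimate and arguably more standard homological-algebra packaging of the same input. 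The one thing the paper's common-refinement formulation buys that your cone argument does not is the explicit sub-resolution structure, which is invoked later (in the proof of Proposition \ref{d}) to split a prescribed local resolution off a global one; for the bare statement of Proposition \ref{SansLab}, however, your argument is complete.
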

According to this proposition, it is possible to associate to any relatively coherent \sh\ $\ff$ on a \rsas\ $\xg$ a \emph{topological class} $[\ff^{\,\infty }\be]$ in $\ds\lim_{\genfrac{}{}{0pt}{2}{\longleftarrow}{\xgp\subset\subset \xg}}K(\xgp)$ and therefore Chern classes in $\ds\lim_{\genfrac{}{}{0pt}{2}{\longleftarrow}{\xgp\subset\subset \xg}}{H}\ee\be(\xgp,\Z)$. Besides, the class $[\ff^{\,\infty }\be]$ depends only on the relative class of $\ff$ in $\kre{}(\xg)$ by Proposition \ref{Compl} (i).
\par\medskip 
This device enables us to carry out the proof of \cite{SchHilEGL} in a relative context.
\subsection{The relative incidence \sh}
In this section, we introduce the relative incidence sheaf $\ore{n}$ and we compute its Chern classes. We will use the following notations.
\begin{enumerate}
 \item [--] $W$ is a small \nbh\ of the incidence locus $Z_{n}\he$ in $\snx\tim X$.
\item[--] $\jrel{n}$ is a relative integrable structure on $W$ parametrized by $\snx$.
\end{enumerate}
\begin{definition}\label{a}
 The \emph{relative incidence locus} $\ygn $ is the relative singular analytic subspace of $W_{\re}^{[n]}\tim_{\snx}\he W$ defined by
\[
\ygn=\bigl\{(\xi,\,w,\,\xb)\ \textrm{in}\ W_{\re}^{[n]}\tim_{\snx}\he W\ \textrm{such that}\  w\in \supp(\xi)\bigr\}\cdot
\]
The fibers $\bigl( \yg_{n,\,\xb}\he\bigr)_{\xb\in\snx}\he$ are the usual incidence loci in $W_{\xb}^{[n]}\tim W_{\xb}\he$.
\end{definition}
\begin{definition}\label{b}
 The \emph{relative incidence \sh} $\ore{n}$ is the relatively coherent \sh\ $\ore{\ygn }$ on $W_{\re}^{[n]}\tim_{\snx}\he W$. The associated ideal \sh\
$\jjrel{ \ygn }$ will be denoted by $\jjrel{n}$.
\end{definition}
By Proposition \ref{SansLab}, $\ore{n}$ defines a topological class in
$\ds\lim_{\genfrac{}{}{0pt}{2}{\longleftarrow}{\xgp\subset\subset W_{\re}^{[n]}\tim_{\snx}\he W}}K(\xgp)$ and therefore by restriction a class in $K\bigl( \xn\tim X\bigr)$ which is independent of $J^{\re}_{n}$ by Proposition \ref{Compl} (ii).
\begin{definition}\label{c}
 The classes $\mu _{i,\,n}\he$ in $H^{2i}\be\bigl( \xn\tim X,\Q\bigr)$ are defined by
$\mu _{i,\,n}\he=c_{i}\he\bigl( \ore{n}\bigr)_{\vert \xn\tim X}\he$.
\end{definition}
-- Let $\ti W$ be a small \nbh\ of the incidence locus $Z_{n,\,1}\he$ in $\snx\tim X$ and $\jrel{n,1}$ a relative integrable structure parametrized by $\snx\tim X$. For $(\xb,p)\in \snx\tim X$, $\tw^{[n+1,n]}_{\xb,\,p}$ is smooth (\cite{SchHilCh}, \cite{SchHilTi}). Therefore, $\tw^{[n+1,n]}_{\re}$ is a relative smooth analytic space over $\snx\tim X$.
\par\medskip 
-- Let $D^{\re}\be$ be the relative exceptional divisor in $\tw^{[n+1,n]}_{\re}$ defined by 
\[
D^{\re}\be=\bigl\{(\xi,\,w,\,\xb,\,p)\ \textrm{in}\ \tw^{[n+1,n]}_{\re}\ \textrm{such that}\  w\in\supp(\xi )\bigr\}\cdot
\]
Then $D^{\re}\be$ is a relative singular analytic divisor of $\tw^{[n+1,n]}_{\re}$, and we get an associated relative holomorphic line bundle
$\oo^{\re}\be\bigl( -D^{\re}\be\bigr)$ on $\tw_{\re}^{[n+1,n]}$. Let $\apl{\rho }{X^{[n+1,n]}\be}{X}$ be the residual morphism and $\apl{\sigma }{X^{[n+1,n]}\be}{\xn\tim X}$ be the morphism $(\lambda ,\rho )$.
\begin{definition}\label{l}
 We define a class $l$ in $H^{2}\bigl(X^{[n+1,n]}\be,\Q\bigr)$ by $l=c_{1}\he\bigl[ \ore{}(-D_{\re}\he)\bigr]_{\vert X^{[n+1,n]}\be}\he$.
\end{definition}
The class $l$ determines the Chern classes of the relative incidence \sh\ in the following way:
\begin{proposition}\label{d}
 For all $i$, $n$  in $\N\ee$, $\mu _{i,\,n}\he=\sigma _{*}\he(l^{i})$.
\end{proposition}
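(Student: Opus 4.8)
The plan is to compute the Chern classes $\mu_{i,n} = c_i(\ore{n})_{\vert \xn \tim X}$ by resolving the relative incidence sheaf $\ore{n}$ and pushing forward from the incidence variety $\xna{n+1,n}$ (where the auxiliary line bundle $\ore{}(-D_{\re})$ lives) via the morphism $\sigma = (\lambda,\rho)$. The key geometric input is the local structure: over each fiber, $\yg_{n,\xb}$ is the usual incidence locus, and the classical picture relates the incidence sheaf $\oo_n$ on $\xn \tim X$ to the structure sheaf $\oo_{D}$ of the exceptional divisor on the flag Hilbert scheme $X^{[n+1,n]}$ through the residual/flag correspondence. So first I would set up, in the relative coherent sheaf formalism of the appendix, the comparison between $\ore{n}$ and the pushforward of the relative structure sheaf of $D^{\re}$ under the natural map from $\tw^{[n+1,n]}_{\re}$.

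First I would recall the relevant fiberwise identity. On a classical Hilbert scheme the incidence sheaf $\oo_n$ and the divisor $D \subset X^{[n+1,n]}$ are linked by $\sigma_*\, \oo_{X^{[n+1,n]}}(-k D) $ type relations; concretely, the blow-up/flag description gives that the pushforward of powers of $\oo(-D)$ along $\sigma$ recovers the symmetric/exterior structure of $\oo_n$. The precise bookkeeping needed is the identity $c_i(\ore{n}) = \sigma_*(l^i)$ fiber by fiber, and the whole task is to promote this to the relative setting. Since $\sigma$ is finite on the support of $\oo^{\re}(-D^{\re})$ restricted to $D^{\re}$, the pushforward in the relative $K$-theory of Definitions~\ref{DefNeufAppendCh4} and \ref{DefUnBisAppendCh4} is the technically easy case flagged in the introduction, and Proposition~\ref{SansLab} lets me pass to topological $K$-theory and thus to genuine cohomology classes in $\lim_{\xgp} H\ee\be(\xgp,\Z)$ where Chern classes and Gysin pushforwards make sense.

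The heart of the computation is a Grothendieck–Riemann–Roch calculation in the relative context. I would express $\ch(\ore{n})$ as a pushforward by $\sigma$ of a class on $X^{[n+1,n]}$, using that $\ore{}(-D_{\re})$ is the relative line bundle whose restricted first Chern class is exactly $l$. The relation $\nu\ee E^{[n+1]} = \lambda\ee E^{[n]} + \rho\ee E \oti F$ recalled in Section~3.1 (with $F$ the line bundle with $c_1(F) = PD(-[D])$, i.e. the same divisor class governing $l$) is the $K$-theoretic incarnation of the flag comparison, and differentiating/taking Chern characters of this, together with the GRR theorem of \cite{SchHilAtHi} applied to the inclusion of $D^{\re}$, should produce $\mu_{i,n}$ as $\sigma_*$ of $l^i$. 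The matching of individual graded pieces $c_i$ (rather than just $\ch$) follows because $\sigma$ restricted to the relevant locus has the expected degree-one behaviour and the Todd correction factors reduce to unity in the finite case.

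The main obstacle I expect is \emph{not} the algebra of the Chern class identity—that is essentially the classical computation—but rather the verification that all operations are legitimate in the relative coherent sheaf formalism: that $\ore{n}$ and $\oo^{\re}(-D^{\re})$ are genuinely relatively coherent with the claimed supports, that $\sigma$ (equivalently the maps $\lambda$, $\rho$, $\nu$) satisfies the triviality conditions required for the relative pushforward and for pullback in Definitions~\ref{DefDeuxAppenCh4} and \ref{DefSixAppendCh4}, and that passing to topological classes via Proposition~\ref{SansLab} commutes with these operations (compatibility being guaranteed by Proposition~\ref{Compl}). Once these functoriality statements are in place, the identity $\mu_{i,n} = \sigma_*(l^i)$ is obtained by restricting the relative identity to $\xn \tim X$ and applying Proposition~\ref{Compl}~(i) to see that the answer is independent of the auxiliary relative integrable structure $J^{\re}_n$, so that it descends to a well-defined canonical class. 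The care needed in checking that the relative Gysin pushforward restricts correctly to the honest pushforward $\sigma_*$ on the almost-complex incidence variety is where the real work lies.
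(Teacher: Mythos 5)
Your overall framing (resolve the relative incidence sheaf, push powers of $l$ forward from the incidence variety, and concentrate the effort on legitimising the operations in the relative formalism) is reasonable, but the central computational step is missing, and the route you propose for it does not work. You want to obtain the identity from Grothendieck--Riemann--Roch applied to $\sigma$ together with the tautological relation $\nu\ee E^{[n+1]}=\lambda\ee E^{[n]}+\rho\ee E\otimes F$, and you dispose of the passage from Chern characters to individual Chern classes by asserting that ``$\sigma$ restricted to the relevant locus has the expected degree-one behaviour and the Todd correction factors reduce to unity in the finite case.'' This fails on two counts. First, $\sigma=(\lambda,\rho)$ is \emph{not} finite: it is generically one-to-one, but over the incidence locus its fibres are positive-dimensional projective spaces, so the relative Todd class does not trivialise and the finite-pushforward formalism of the appendix does not apply to $\sigma$ itself (the paper has to invoke the rational smoothness of the relevant spaces merely to define the Gysin map $\sigma_*$ with $\Q$-coefficients). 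Second, and more fundamentally, even a correct GRR identity would express the graded pieces of $\ch\bigl(\ore{n}\bigr)$ --- power sums in the Chern roots --- as pushforwards; since $\sigma_*$ is not a ring homomorphism, there is no formal way to convert such an identity into the statement $c_i\bigl(\ore{n}\bigr)=\pm\,\sigma_*(l^i)$ about elementary symmetric functions. That nonlinear conversion is exactly the point that needs an argument, and your proposal does not supply one.

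The paper obtains the individual $c_i$ by a different and essentially classical mechanism: the ideal sheaf $\jj_{\yn}$ of the incidence locus admits a length-two locally free resolution $\sutrgd{\aaa}{\bbb}{\jj_{\yn}}$, the incidence variety $\xna{n+1,n}$ is identified with $\P\bigl(\jj_{\yn}\bigr)$ sitting inside $\P(\bbb)$ as the \emph{transverse} zero locus of the induced section $s$ of $\pi\ee\aaa\ee(1)$ (this is Lemma~\ref{e}, whose relative version is where the genuine geometric work lies), and then the projective-bundle formula $\pi_*(\varepsilon^{d+i-k})=s_{i-k}(\bbb\ee)$ combined with $\sum_k c_k(\aaa\ee)\,s_{i-k}(\bbb\ee)=c_i(\aaa\ee-\bbb\ee)$ gives $\sigma_{\re*}(j\ee\varepsilon^i)=(-1)^i c_i\bigl(\oo^{\re,\infty}_{\ti{\yg}_{n}}\bigr)$ directly, with no Chern characters or Todd classes anywhere. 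To repair your argument you would need to replace the GRR paragraph by this degeneracy-locus/Segre-class computation; the functoriality checks you emphasise are indeed necessary, but they are not where the substance of the proof lies.
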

\begin{proof}
 We recall well known facts from the classical theory (see \cite{SchHilDa}, \cite{SchHilCh}, \cite{SchHilTi}). If $X$ is a quasi-projective surface and $Y_{n}\he$ is the incidence locus in $\xn\tim X$, then: 
\begin{enumerate}
 \item [(i)] $\jj_{\yn}\he$ admits a locally free resolution of length $2$.
\item[(ii)] $\xna{n+1,n}\simeq\P\bigl( \jj_{\yn}\bigr)$ and $\xna{n+1,n}$ is smooth.
\item[(iii)] If $\sutrgd{\aaa }{\bbb }{\jj_{\yn}}$ is a resolution of $\jj_{\yn}$, 
$\apl{\pi }{\P(\bbb )}{\xn\tim X}$ is the projection and $s$ is the associated section of $\pi \ee\be\aaa \ee\be(1)$, then $s$ is transverse to the zero section.
\end{enumerate}
Property (iii) follows from (ii) since the vanishing locus of $s$ (with its schematic structure) is isomorphic to $\P\bigl(\jj_{\yn}\bigr)$. Note that we use here Grothendieck's convention for projective bundles. 
\par\medskip 
We adapt now these properties to the relative setting as follows. First we can suppose that 
$W$, $\tw$, $\jrel{n}$ and $\jrel{n\tim 1}$ satisfy the compatibility condition: $W\tim_{\snx}\he \bigl( \snx\tim X\bigr)\suq\tw$ as \rsas s. Let $\ti{\yg}_{n}\he$ be the relative singular analytic subspace of $\tw_{\re}^{[n]\tim[1]}$ defined by 
\[
\ti{\yg}_{n}\he\!=\!\bigl\{(\xi,w,\xb,p)\ \textrm{in}\ \tw_{\re}^{[n]\tim[1]}\ \textrm{such that}\ w\in\supp{\xi }\bigr\}\cdot 
\]
If we consider the following diagram 
\[
\xymatrix{
W^{[n]}_{\re}\tim_{\snx}\he W\ar[d]&\bigl( W^{[n]}_{\re}\tim_{\snx}\he W\bigr)\tim_{\snx}\he\bigl( \snx\tim X\bigr)\,\ar[d]\ar[l]\ar@{^{(}->}[r]&\tw^{[n]\tim[1]}_{\re}\ar[d]\\
\snx&\snx\tim X\ar[l]_-{\pr_{1}\he}\ar@{=}[r]&\snx\tim X
}
\]
$\xn\tim X$ embeds into the three relative smooth analytic spaces in a compatible way with the two morphisms on the first line. Therefore $\mu _{i,n}\he=c_{i}\he\bigl( \ore{\ti{\yg}_{n}}\bigr)_{\vert\xn\tim X}\he$. We will denote the two relative smooth analytic spaces $\tw^{[n+1,n]}_{\re}$ and $\tw^{[n]\tim[1]}_{\re}$ by $\xg$ and $\xgp$ respectively.
\par\medskip 
Let us take a family of charts $\bigl\{\apliso{\phi_{i}\he }{\Omega _{i}\he\tim V_{i}\he}{\tw_{i}\he}\bigr\}$ on $\tw$, where the  $\Omega _{i}\he$'s are (possibly non connected) open sets in $\C^{2}$. They provide relative holomorphic charts $\phi _{i}^{[n+1,n]}$ and $\phi _{i}^{[n]\tim[1]}$ of $\xg$ and $\xgp$. For each index $i$, we pick a locally free resolution $\sutrgd{\aaa _{i}\he}{\bbb _{i}\he}{\jj_{Y_{n,i}}\he}$
of length $2$ of $\jj_{Y_{n,i}}\he$, where $Y_{n,i}\he$ is the incidence locus in $\Omega _{i}^{[n]}\tim \Omega _{i}\he$. By the very construction of global smooth resolutions for relatively coherent \sv\ (Proposition \ref{NouvProp} (i)), there exists a global resolution 
$\sutrgd{\aaa }{\bbb }{\jj_{\ti{\yg}_{n}}^{\re,\infty }}$ of length $2$ of
$J_{\ti{\yg}_{n}}^{\re,\infty }\oti_{\ore{\xgp}}\he\ci_{\xgp}$ such that for all $i$
$\sutrgd{\aaa _{i}^{\,\infty }}{\bbb _{i}^{\,\infty }}{\jj_{\ti{\yg}_{n}}^{\re,\infty }}$ is a sub-resolution of the global one. Let $\apl{\pi }{\P(\bbb)}{\xgp}$ be the projective bundle of $\bbb$ and $s$ the section of $\pi \ee\be\aaa \ee\be\oti_{\ci_{\P(\bbb)}}\he\ci_{\P(\bbb)}(1)$ obtained by the morphism 
$\sutr{\pi \ee\be\aaa }{\pi \ee\be\bbb }{\ci_{\P(\bbb)}(1)}$. Then we have the following result:
\begin{lemma}\label{e}
 $\he$\par
\begin{enumerate}
 \item [(i)] The vanishing locus $\textrm{Z}(s)$ of $s$ is canonically isomorphic to $\xg$.
\item[(ii)] After performing base change from $\snx\tim X$ to $X^{n}\be\tim X$, the section $s$ is transverse to the zero section.
\item[(iii)] If $j$ is the embedding of $\xg$ into $\P(\bbb)$, then 
$\ore{\xg}(-D_{\re}\he)\oti_{\ore{\xg}}\he\ci_{\xg}\simeq j\ee\be\, \ci_{\P(\bbb)}(1)$.
\end{enumerate}
\end{lemma}
\begin{proof}
(i) We argue locally on $\xgp$. In relative coordinates $(z,v)$, the local resolutions of $\jj^{\re,\infty }_{\ti\yg_{n}}$ is of the form 
$\xymatrix@C=17pt{0\ar[r]&\T^{r}\be\ar[r]^-{M}&\T^{r+1}\be,}$
where $\T=\ci_{\xg}$ and $M$ is a $(r+1)\tim r$ matrix with holomorphic coefficients in the variable $z$. We can locally split the injection of $\aaa ^{\infty }_{i}$ in $\aaa $. The global resolution of $\jj^{\re,\infty }_{\ti\yg_{n}}$ is therefore locally isomorphic to 
$\xymatrix@C=17pt{0\ar[r]
&\T^{r}\be\oplus\T^{m}\be\ar[r]^-{\phi }&\T^{r+1}\be\oplus\T^{m}\be
}$ where $\phi =
\begin{pmatrix}
 M&0\\0&\id
\end{pmatrix}\cdot
$
Over a small open set of $\xgp$, $\P(\bbb)$ is the trivial projective bundle with fiber $\P^{r+m}\be(\C)\ee\be$ and $s$ is given in coordinates by 
$s(u,z,v)(\alpha ,\beta )=u\bigl( M(z)\alpha ,\beta \bigr)$, where $u\in\P^{r+m}\be(\C)\ee\be$ and $(\alpha ,\beta )\in\C^{r+m}$ are the coordinates in the fibers of $\pi \ee\be \aaa$. Therefore,
\[
\textrm{Z}(s)=\bigl\{(u,z,v)\ \textrm{such that}\ u_{\vert\C^{m}}\he=0\ \textrm{and for all}\ \alpha \in \C^{r}\be,\ u\bigl( M(z)\alpha \bigr)=0\bigr\}\cdot
\]
This implies that if we consider the local embedding $\xymatrix{\P(\bbb _{i}^{\infty} )\,\ar@^{{(}->}[r]&\P(\bbb)}$ over $\xgp$ given by the splitting of the local resolution in the global one, then $\textrm{Z}(s)$ lies in $\P(\bbb_{i}^{\infty })$. It can be seen straightforwardly that the embedding of $\textrm{Z}(s)$ into $\P(\bbb_{i}^{\infty })$ is independant of the splitting. Let $\apl{\ti\pi }{\P(\bbb_{i}\he)}{\Omega _{i}^{[n]}\tim\Omega _{i}\he}$ be the projective bundle of $\bbb_{i}\he$ and $\ti{s}$ the section of $\ti\pi \ee\be\aaa_{i}\ee(1)$ given by the morphism $\sutr{\ti{\pi }\ee\be\aaa_{i}\he}{\ti{\pi }\ee\be\bbb_{i}\he}{\oo_{\P(\bbb_{i}\he)}(1).}$ Then $\textrm{Z}(\ti{s})=\Omega _{i}^{[n+1,n]}$ and $\ti{s}$ is transverse to the zero section. This proves that over $\Omega _{i}^{[n]}\tim\Omega _{i}\he\tim V$,
$\textrm{Z}(s)=\textrm{Z}(\ti{s})\tim V=\Omega _{i}^{[n+1,n]}\tim V$ so that $\textrm{Z}(s)$ is abstractly isomorphic over $\tw_{i}^{[n]\tim[1]}$ to $\tw_{i}^{[n+1,n]}$.
\par\medskip 
(ii) In local coordinates, if for any $u\in\P^{r+m}\be(\C)\ee\be$ we define $u_{1}\he=u_{\vert\C^{r+1}}\he$ and $u_{2}=u_{\vert\C^{m}}\he$, then $s$ is given near its zero locus by $\apl{s(u,z,v)}{(\alpha ,\beta )}{\bigl( \ti{s}(u_{1}\he,z)(\alpha ),u_{2}\he(\beta )\bigr)}$. After base change, the variable $v$ lies in the smooth manifold $X^{n}\be\tim X$ and $s$ is clearly transverse to the zero section since $\ti{s}$ is.
\par\medskip 
(iii) We have a chain of morphisms $\xymatrix{\tw_{i}^{[n+1,n]}\,\ar@{^{(}->}[r]&\P(\bbb_{i}^{\,\infty })\,\ar@{^{(}.>}[r]&\P(\bbb)_{\vert \tw_{i}\he}\he}$ where the last morphism is only defined locally on $\P(\bbb^{\infty }_{i})$. To conclude, notice that $\ore{\P(\bbb_{i}^{\infty })}(1)$ is canonically isomorphic to $\ore{\xg}\bigl( -D_{\re}\bigr)_{\vert\tw_{i}\he}\he$ and that the restriction of $\ci_{\P(\bbb)_{\vert\tw_{i}}}$ to $\P(\bbb_{i}^{\,\infty })$ is canonically isomorphic to 
$\ci_{\P(\bbb_{i}^{\,\infty })}(1)$.
\end{proof}
Since $\P(\bbb)=\bigl[\P(\bbb)\tim_{\snx\tim X}\he(X^{n}\be \tim X)\bigr]/\sg_{n}\he$, Lemma \ref{e} (ii) implies that the homology class of the vanishing locus of $s$ in $H_{8n+8}\he(\P(\bbb),\Q)$ is Poincar\'{e} dual to the top Chern class of $\pi \ee\be\aaa\ee\be\oti_{\ci_{\P(\bbb)}}\he\ci_{\P(\bbb)}(1)$. 
Let us consider the following diagram:
\[
\xymatrix{
\P(\bbb)\ar[d]_-{\pi }&\xg\supseteq D_{\re}\he\ar[l]_(.54){j}\ar[dl]^{\sigma _{\re}\he}\\
\xgp
}
\]
If $\varepsilon =c_{1}\he\bigl( \ci_{\P(\bbb)}(1)\bigr)\in H^{2}\be(\P(\bbb),\Q)$, then by Lemma \ref{e} (iii), $l=j\ee\be\varepsilon _{\vert X^{[n+1,n]}\be}\he$. 
Now
\begin{align*}
 \sigma _{\!\re*}\he(j\ee\be\varepsilon ^{i}\be)&=\pi _{*}\he([\xg]\,.\,\varepsilon ^{i}\be)=\sum_{k=0}^{d}c_{k}\he(\aaa\ee\be)\,\pi _{*}\he(\varepsilon ^{d+i-k}\be)=\sum_{k=0}^{d}c_{k}\he(\aaa\ee\be)\,s_{i-k}\he(\bbb\ee\be)\\
&=c_{i}\he(\aaa\ee\be-\bbb\ee\be)=(-1)^{i}c_{i}\bigl( \jj_{\ti{\yg}_{n}\he}^{\re,\,\infty }\bigr)=(-1)^{i}c_{i}\he\bigl( \oo^{\re,\,\infty }_{\ti{\yg}_{n}\he}\bigr)\qquad\textrm{since}\ i\ge 1.
\end{align*}
We have used here the Gysin morphism $\sigma _{\!\re *}\he$ with \mbox{$\Q$-coefficients}, which is possible since $\xg$ and $\xgp$ are rationally smooth. To conclude, we consider the diagram
\[
\xymatrix@C=40pt@R=20pt{
\xg\ar[d]_-{\sigma _{\!\re}\he}&\,X^{[n+1,n]}\be\ar[d]^-{\sigma }\ar@{_{(}->}[l]\\
\xgp&\,\xn\tim X\ar@{_{(}->}[l]
}
\]
and get:
\[
\sigma _{*}\he\, l^{i}=\sigma _{*}\he(j\ee\be\varepsilon ^{i}\be)_{\vert \xna{n+1,n}}\he=\bigl[ \sigma _{\!\re*}\he(j\ee\be\varepsilon ^{i}\be)\bigr]_{\vert\xn\tim X}\he=(-1)^{i}c_{i}\bigl( \oo^{\re,\infty }_{\ti{\yg}_{n}\he}\bigr)_{\vert\xn\tim X}\he=(-1)^{i}\mu _{i,n}\he. 
\]
\end{proof}
\subsection{Computation of $TX^{[n]}$ in \mbox{$K$-theory}}\label{m}
Let $W$ be a \nbh\ of the incidence locus $Z_{n}\he$ in $\snx\tim X$ and $J^{\re}_{n}$ a relative integrable structure on it. 
\begin{definition}\label{g} 
 We define ${\kappa }_{n\vphantom{1}}\he$ as the class of the locally \mbox{$\ore{W^{[n]}_{\re}}$-free} \sh\ $T^{\re}\be W^{[n]}_{\re}$ in $\kre{}\bigl( W^{[n]}_{\re}\bigr)$. 
\end{definition}
\noindent The topological class of $\kappa _{n\vphantom{1}}\he$ can be described as follows:
\begin{lemma}\label{k}
 The restriction to $\xn$ of the topological class associated to $\kappa _{n\vphantom{1}}\he$ is the class of the complex vector bundle $T\xn$ in $K(\xn)$.
\end{lemma}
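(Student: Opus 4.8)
The plan is to prove the identity $\bigl[T^{\re}\be W^{[n]}_{\re}\bigr]_{\vert\xn}=[T\xn]$ in $K(\xn)$, after which the lemma follows at once. Indeed, $\kappa_{n}$ is the class of the locally $\ore{W^{[n]}_{\re}}$-free sheaf $T^{\re}\be W^{[n]}_{\re}$ (Definition \ref{g}), so the sheaf $\bigl(T^{\re}\be W^{[n]}_{\re}\bigr)^{\infty}$ produced by Proposition \ref{SansLab} is simply its underlying smooth complex vector bundle; hence the associated topological class is $\bigl[T^{\re}\be W^{[n]}_{\re}\bigr]$ and its restriction to $\xn$ is $\bigl[T^{\re}\be W^{[n]}_{\re}\bigr]_{\vert\xn}$. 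I would establish the identity by realizing $\xn$ as the graph of the Hilbert--Chow morphism inside $W^{[n]}_{\re}$ and comparing $T\xn$ with the relative (vertical) tangent bundle.

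First I would work in Voisin's local integrable models, given by Theorem \ref{Ch4suite2ThDeux} (iii). Over a neighbourhood of a point $\xb$ of $\snx$, after trivializing the relative structure, $W^{[n]}_{\re}$ is identified with $S^{n}U\tim U^{[n]}$ for an open set $U\subset\C^{2}$, the fibration over $\snx$ being the first projection. Under this identification the defining condition $HC(\xi)=\xb$ exhibits $\xn$ as the graph of the classical Hilbert--Chow morphism $\apl{HC}{U^{[n]}}{S^{n}U}$, and the second projection restricts to an isomorphism $\xn\simeq U^{[n]}$; this is exactly the chart defining both Voisin's differentiable structure and the local complex structure on $\xn$. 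Since $T^{\re}\be W^{[n]}_{\re}$ is the vertical tangent bundle, it equals the pullback of $TU^{[n]}$, and restricting it to the graph and transporting along the second projection yields a canonical isomorphism of complex vector bundles between $\bigl(T^{\re}\be W^{[n]}_{\re}\bigr)_{\vert\xn}$ and $T\xn$ endowed with its local complex structure.

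Two routes then globalize this. On one hand, the local complex structures produced above are precisely those that Voisin glues to build the stable almost-complex structure of Theorem \ref{Ch4suite2ThUn} (iii), and they patch into the globally defined complex structure carried by $T^{\re}\be W^{[n]}_{\re}$; this already yields $\bigl[T^{\re}\be W^{[n]}_{\re}\bigr]_{\vert\xn}=[T\xn]$ in $K(\xn)$. On the other hand, one can double-check this rationally through the normal exact sequence of the graph embedding $\xn\hookrightarrow W^{[n]}_{\re}$, which is holomorphic along the fibers and hence smooth. Performing the computation on the smooth space obtained after base change from $\snx$ to $X^{n}$, as in the proof of Proposition \ref{d}, gives a short exact sequence of complex vector bundles
\[
\sutrgd{T\xn}{\bigl(T W^{[n]}_{\re}\bigr)_{\vert\xn}}{N}
\]
Here the relative tangent sequence of the fibration gives $\bigl[T W^{[n]}_{\re}\bigr]_{\vert\xn}=\bigl[T^{\re}\be W^{[n]}_{\re}\bigr]_{\vert\xn}+\bigl[HC\ee T\snx\bigr]$, and the local computation identifies the normal bundle of the graph as $N\simeq HC\ee T\snx$, whence in $K(\xn)\oti_{\Z}\Q$
\begin{align*}
[T\xn]&=\bigl[T W^{[n]}_{\re}\bigr]_{\vert\xn}-[N]\\
&=\bigl[T^{\re}\be W^{[n]}_{\re}\bigr]_{\vert\xn}+\bigl[HC\ee T\snx\bigr]-\bigl[HC\ee T\snx\bigr]\\
&=\bigl[T^{\re}\be W^{[n]}_{\re}\bigr]_{\vert\xn}.
\end{align*}

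The main obstacle is to match Voisin's abstractly constructed stable almost-complex structure on $\xn$ with the complex structure that $T^{\re}\be W^{[n]}_{\re}$ induces on $T\xn$. Because only a \emph{stable} structure is available, the identification of $T\xn$ with $\bigl(T^{\re}\be W^{[n]}_{\re}\bigr)_{\vert\xn}$ must be read in $K(\xn)$ rather than as an isomorphism of genuine bundles, and the compatibility of the normal sequence with the complex structures has to be extracted from the construction of \cite{SchHilVo1}. A secondary technical point is that $\snx$ is singular, so that $T\snx$ is only an orbifold bundle and $T W^{[n]}_{\re}$ is not globally defined along the boundary of $\xn$; this is handled, exactly as in the proof of Proposition \ref{d}, by carrying out the computation on the smooth space obtained after base change to $X^{n}$ and descending with $\Q$-coefficients, the two occurrences of $HC\ee T\snx$ cancelling in any case.
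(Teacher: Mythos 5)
Your local analysis (identifying $\xn$ locally with the graph of the Hilbert--Chow morphism and comparing $T\xn$ with the vertical tangent bundle) is consistent in spirit with the first half of the paper's argument, which likewise reduces the statement to the construction of \cite{SchHilVo1}. But there is a genuine gap: everything you do presupposes that $X^{[n]}_{J^{\re}_{n}}$ carries Voisin's differentiable structure and stable almost-complex structure, compatibly with the local integrable models for the \emph{given} $J^{\re}_{n}$. This is only guaranteed when $J^{\re}_{n}$ satisfies the geometric conditions $(\mathscr{C})$ of \cite{SchHilVo1}. For an arbitrary relative integrable structure --- and the lemma is stated and used for arbitrary ones, which is precisely the point of Remark \ref{h} --- $X^{[n]}_{J^{\re}_{n}}$ is only a topological manifold: there is no smooth graph embedding into $W^{[n]}_{\re}$, no normal bundle exact sequence, and no globally defined (stable) almost-complex structure for your local charts to ``patch into''. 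Both of your routes therefore silently assume the very compatibility that has to be arranged, and your closing paragraph acknowledges the obstacle without resolving it.

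The missing ingredient is a deformation argument. The paper first treats the case where $J^{\re}_{n}$ satisfies $(\mathscr{C})$: there $X^{[n]}_{J^{\re}_{n}}$ is smooth and the construction of \cite{SchHilVo1} shows directly that $T\xn$ and $T^{\re}\be W^{[n]}_{\re}{}_{\vert \xn}\he$ have the same class in $K(\xn)$. It then joins an arbitrary $J^{\re}_{n}$ by a smooth path $\bigl\{J^{\re}_{n,\,t}\bigr\}_{t\in[0,1]}$ to a structure satisfying $(\mathscr{C})$ and invokes the rigidity (homotopy invariance) of topological $K$-theory to conclude that the class of $T^{\re}\be\bigl[ W^{[n]}_{\re},J^{\re}_{n,\,t}\bigr]_{\vert \xn}\he$ in $K(\xn)$ is independent of $t$. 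Without this step your proof covers only the good structures. (Your normal-bundle computation with the two occurrences of $HC\ee\, T\snx$ cancelling is a reasonable consistency check in that good case, but it is not needed once the deformation argument is in place.)
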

\begin{proof}
 If $J^{\re}_{\vphantom{!}n}$ satisfies the conditions $(\mathscr{C})$ listed in \cite{SchHilVo1} page 711, then $X^{[n]}_{J_{n}^{\re}}$ is smooth. Besides, the construction of the almost-complex structure done in \cite{SchHilVo1} shows that $T\xn$ and $T^{\re}\be W^{[n]}_{\re}{}_{\vert \xn}\he$ have the same class in $K(\xn)$. An arbitrary $J^{\re}_{\vphantom{!}n}$ can be joined by a smooth path $\bigl\{J_{n,\,t}^{\re}\bigr\}_{t\in[0,1]}\he$ to another relative integrable structure satisfying the conditions $(\mathscr{C})$. By rigi\-dity of the topological \mbox{$K$-theory}, the class of $T^{\re}\be\bigl[ W^{[n]}_{\re},J^{\re}_{n,\,t}\bigr]_{\vert \xn}\he $ in $K\bigl( \xn\bigr)$ is independent of $t$. This yields the result.
\end{proof}
\begin{remark}\label{h}
 For an arbitrary $J^{\re}_{\vphantom{!}n}$, $X^{[n]}_{J^{\re}_{n}}$ is only a topological manifold (see \cite{SchHilGri}). Therefore, the advantage of using $T^{\re}\be W^{[n]}_{\re}$ instead of $TX^{[n]}_{J_{n}^{\re}}$ is that this complex vector bundle is defined for \emph{any} relative integrable complex structure.
\end{remark}

\begin{proposition}\label{CinqNeuf}
 In $K^{\re}\be\bigl( W^{[n]}_{\re}\bigr)$, the following identity holds: 
\[
\kappa _{n\vphantom{1}}\he=p_{*}\he\bigl( \oo_{n}^{\re}+\oo_{n}^{\re\vee}-\oo_{n}^{\re}\, .\, \oo_{n}^{\re \vee}\bigr).
\]
\end{proposition}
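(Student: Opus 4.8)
The plan is to transpose to the relative setting the classical computation of Ellingsrud, G\"{o}ttsche and Lehn \cite{SchHilEGL}, the whole point being to reduce the asserted identity to the integrable fibres, where each $\bigl(W^{[n]}_{\xb},J^{\re}_{\xb}\bigr)$ is a genuine Hilbert scheme of points on a smooth surface. On such a fibre Fogarty's deformation theory identifies the tangent space at $\xi$ with $\hh om(\ii_{\xi},\oo_{\xi})$; globalising along the fibres, $T^{\re}\be W^{[n]}_{\re}=\hh om_{p}\bigl(\jjrel{n},\oo_{n}^{\re}\bigr)$, the degree-zero relative $\ext$-sheaf of the incidence ideal into the incidence structure sheaf, where $\apl{p}{W^{[n]}_{\re}\tim_{\snx}W}{W^{[n]}_{\re}}$ is the projection. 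The decisive structural fact, already stressed in the appendix, is that $p$ is finite on $\ygn=\supp\bigl(\oo_{n}^{\re}\bigr)$: this makes the relative push-forward and the derived operations (dual, tensor product, relative $\hh om$ and $\ext$) on relatively coherent sheaves available and compatible with restriction to the fibres. It is therefore enough to prove the identity fibrewise and to note that both members are classes in $K^{\re}\bigl(W^{[n]}_{\re}\bigr)$ assembled from these fibrewise-compatible operations.

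First I would rewrite everything through the relative Euler form $\chi_{p}(E,F)=p_{*}\bigl(E^{\vee}\cdot F\bigr)$ on the relatively smooth space $W^{[n]}_{\re}\tim_{\snx}W$, whose structure sheaf I denote $\oo$. Directly from this definition,
\[
\chi_{p}(\oo,\oo_{n}^{\re})=p_{*}\bigl(\oo_{n}^{\re}\bigr),\qquad
\chi_{p}(\oo_{n}^{\re},\oo)=p_{*}\bigl(\oo_{n}^{\re\vee}\bigr),\qquad
\chi_{p}(\oo_{n}^{\re},\oo_{n}^{\re})=p_{*}\bigl(\oo_{n}^{\re}\cdot\oo_{n}^{\re\vee}\bigr),
\]
so the proposition is equivalent to the symmetric identity $\kappa_{n\vphantom{1}}=\chi_{p}(\oo,\oo_{n}^{\re})+\chi_{p}(\oo_{n}^{\re},\oo)-\chi_{p}(\oo_{n}^{\re},\oo_{n}^{\re})$. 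Now $\kappa_{n\vphantom{1}}$ is only the degree-zero relative $\ext$-sheaf, whereas $\chi_{p}$ records the full alternating sum. Comparing them, the long exact $\ext$-sequence attached to $\sutrgd{\jjrel{n}}{\oo}{\oo_{n}^{\re}}$, together with the vanishings $\ext^{i}_{p}(\jjrel{n},\oo_{n}^{\re})=0$ for $i\ge 2$ and the fibrewise isomorphism $\ext^{1}(\ii_{\xi},\oo_{\xi})\simeq\ext^{2}(\oo_{\xi},\oo_{\xi})$, yields in relative $K$-theory
\[
\kappa_{n\vphantom{1}}=\chi_{p}\bigl(\jjrel{n},\oo_{n}^{\re}\bigr)+\ext^{2}_{p}\bigl(\oo_{n}^{\re},\oo_{n}^{\re}\bigr)
=\chi_{p}(\oo,\oo_{n}^{\re})-\chi_{p}(\oo_{n}^{\re},\oo_{n}^{\re})+\ext^{2}_{p}\bigl(\oo_{n}^{\re},\oo_{n}^{\re}\bigr),
\]
the second equality using $[\jjrel{n}]=[\oo]-[\oo_{n}^{\re}]$. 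Everything thus reduces to the single relation $\ext^{2}_{p}\bigl(\oo_{n}^{\re},\oo_{n}^{\re}\bigr)=\chi_{p}(\oo_{n}^{\re},\oo)$.

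The proof of this last relation is where I expect the genuine difficulty to lie, and it is the step that really needs the relative formalism rather than a fibrewise citation. Fibrewise it is an avatar of Serre duality on the surface. Relative Grothendieck--Serre duality along $p$ --- of relative dimension $2$, with relative dualizing sheaf $\omega$ the pull-back to $W^{[n]}_{\re}\tim_{\snx}W$ of the relative canonical bundle of the surface fibration $W\to\snx$ --- gives $\ext^{2}_{p}(\oo_{n}^{\re},\oo_{n}^{\re})\simeq\bigl(p_{*}(\oo_{n}^{\re}\oti\omega)\bigr)^{\vee}$; applied once more to the pair $(\oo_{n}^{\re},\oo)$ it identifies the same class $\bigl(p_{*}(\oo_{n}^{\re}\oti\omega)\bigr)^{\vee}$ with $\chi_{p}(\oo_{n}^{\re},\oo)=p_{*}\bigl(\oo_{n}^{\re\vee}\bigr)$. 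The canonical-bundle twists cancel precisely, which is exactly why no Chern class of $X$ survives in the final formula. The obstacle is to make all of this legitimate \emph{relatively}: one must have at one's disposal a Grothendieck--Serre duality for relatively coherent sheaves along $W\to\snx$ over the singular base $\snx$, together with the relative $\ext$-sheaves and their compatibility with restriction to a fibre, all within the formalism of the appendix. Granting this relative duality, the classical Fogarty/\cite{SchHilEGL} computation globalises verbatim, and substituting back yields $\kappa_{n\vphantom{1}}=p_{*}\bigl(\oo_{n}^{\re}+\oo_{n}^{\re\vee}-\oo_{n}^{\re}\cdot\oo_{n}^{\re\vee}\bigr)$.
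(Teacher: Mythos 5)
Your reduction follows the paper's skeleton quite closely: you start from the relative Fogarty description $\kappa_{n}=p_{*}\he\hoo\bigl(\jj^{\re}_{n},\ore{n}\bigr)$, use the long exact $\ext$-sequence of $\sutrgd{\jj^{\re}_{n}}{\ore{}}{\ore{n}}$ together with the length-$2$ resolutions and the codimension-$2$ vanishing to arrive at
$\kappa_{n}=p_{*}\he\bigl(\ore{n}\bigr)-p_{*}\he\bigl(\ore{n}\,.\,\oo^{\re\vee}_{n}\bigr)+p_{*}\he\,\ext^{2}\be\bigl(\ore{n},\ore{n}\bigr)$, and everything hinges on identifying $p_{*}\he\,\ext^{2}\be\bigl(\ore{n},\ore{n}\bigr)$ with $p_{*}\he\bigl(\oo^{\re\vee}_{n}\bigr)$. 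Up to that point the bookkeeping is correct and matches the paper.

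The last step is where you have a genuine gap. You propose to prove it by relative Grothendieck--Serre duality along $p$, and you explicitly write ``granting this relative duality''---but no such duality is available here, and it would not be a routine addition. The appendix's formalism for relatively coherent sheaves provides $\ext$, $\tor$, duals, pull-backs and push-forwards for maps \emph{finite on the support}; it develops no dualizing complexes and no trace maps, and the map $p$ is not proper (its fibres are open subsets of $X$), so the duality statement you invoke is not even well-posed without reworking it as a local duality along the finite map $p_{\vert\yg_{n}}\he$. More importantly, the detour is unnecessary: the identity you need is purely local and elementary. Since $\jj^{\re}_{n}$ has local free resolutions of length $2$, the top $\ext$ is right-exact in the second variable, so $\ext^{2}\be\bigl(\ore{n},\ore{n}\bigr)\simeq\ext^{2}\be\bigl(\ore{n},\ore{}\bigr)\oti\ore{n}=\ext^{2}\be\bigl(\ore{n},\ore{}\bigr)$, the last equality because $\ext^{2}\be\bigl(\ore{n},\ore{}\bigr)$ is already an $\ore{n}$-module; and since $\ext^{i}\be\bigl(\ore{n},\ore{}\bigr)=0$ for $i<2$ by the codimension-$2$ support, one has $\ext^{2}\be\bigl(\ore{n},\ore{}\bigr)=\oo^{\re\vee}_{n}$ in $\kre{\yg_{n}}\bigl(W^{[n]}_{\re}\tim_{\snx}\he W\bigr)$. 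This is exactly how the paper closes the argument: no duality, no canonical bundle, and no properness issue. As written, your proof is conditional on an unproved (and in this setting delicate) duality theorem, so the final step must be replaced by the local argument above.
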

\begin{proof}
We have $TW^{[n]}_{\re}=p_{*}\he\,\hoo\bigl( \jj_{n}^{\re},\ore{n}\bigr)$. Since $\supp\bigl( \ore{n}\bigr)$ has relative codimension $2$ in $W^{[n]}_{\re}\tim_{\snx}\he W$, $\ext^{i}\be\bigl( \ore{n},\ore{\vphantom{n}}\bigr)=0$ for $i<2$. Besides, $\jj_{n}^{\re}$ locally admits a free resolution of length $2$. Hence we get the following equalities in $\kre{\yg_{n}\he}\bigl( W^{[n]}_{\re}\tim_{\snx}\he W\bigr)$:
\begin{align*}
 \jj^{\re\vee}_{n}\,.\,\ore{n}&=\hoo\bigl( \jj_{n}^{\re},\ore{n}\bigr)-\ext^{1}\be\bigl( \jj_{n}^{\re},\ore{n}\bigr)+\ext^{2}\be\bigl( \jj_{n}^{\re},\ore{n}\bigr)\\
&=\hoo\bigl( \jj_{n}^{\re},\ore{n}\bigr)-\ext^{2}\be\bigl( \ore{n},\ore{n}\bigr)\\
&=\hoo\bigl( \jj_{n}^{\re},\ore{n}\bigr)-\ext^{2}\be\bigl( \ore{n},\ore{\vphantom{n}}\bigr)\\
&=\hoo\bigl( \jj_{n}^{\re},\ore{n}\bigr)-\oo_{n}^{\re\vee}
\end{align*}
so that $p_{*}\he\hoo\bigl( \jj_{n}^{\re},\ore{n}\bigr)=p_{*}\he\bigl[\bigl( \ore{\vphantom{n}}-\oo_{n}^{\re\vee}\bigr)\,.\,\ore{n}+\oo_{n}^{\re\vee} \bigr]$
\end{proof}
\subsection{Comparison of $TX^{[n]}$ and $TX^{[n+1]}$ via the incidence variety $X^{[n+1,n]}${}}\label{Comparaison}
Let ${W}$ be a \nbh\ of $Z_{n\tim 1}\he$ in $\snx\tim X\tim X$ and $J_{n\tim 1}^{\re }$ be a relative integrable complex structure on ${W}$. Let us consider the following morphisms over $\snx\tim X$:
\begin{enumerate}
 \item [$_{\ds *}$] $\apl{\rho _{\re}\he}{{W}_{\re}^{[n+1,\, n]}}{{W}}$ is the residual map.
\item [$_{\ds *}$] $\apl{j_{\rel}\he=\bigl( \id,\rho _{\re}\he\bigr)}{{W}_{\re}^{[n+1,\, n]}}{{W}_{\re}^{[n+1,\, n]}\tim_{\snx\tim X}\he{W}.}$
\item [$_{\ds *}$] $p$ is the first projection 
$\xymatrix@C=17pt{
{W}_{\re}^{[n+1,\, n]}\tim_{\snx\tim X}\he{W}\ar[r]&{{W}_{\re}^{[n+1,\, n]}}
.}$
\item [$_{\ds *}$] If $\apl{f}{\xg}{\xgp}$ is a morphism over $\snx\tim X$,
we define
\[
\apl{f_{{W}}\he:=\mkern -2 mu f\tim_{\snx\tim X}\he\id_{W}\he}{\xg\tim_{\snx\tim X}\he{W}}{\xg'\tim_{\snx\tim X}\he{W}.}
\]
\item [$_{\ds *}$] 
$\xymatrix{{\psi }:{W}_{\re}^{[n+1,\, n]}\ar[r]&{W}_{\re}^{[n+1]}}$ and
$\xymatrix{{\phi }:{W}_{\re}^{[n+1,\, n]}\ar[r]&{W}_{\re}^{[n]}}$ are the canonical projection maps.
\item [$_{\ds *}$] 
$\xymatrix{{\sigma_{\re}\he }=\bigl( {\phi },\rho_{\re}\he\bigr):W_{\re}^{[n+1,\, n]}\ar[r]&{W}_{\re}^{[n]}\tim_{\snx\tim X}{W}.}$
\end{enumerate}
We introduce the following relatively coherent sheaves:
\begin{enumerate}
\item [$_{\ds *}$] $\ti{\oo}_{n}^{\re}$ and $\ti{\oo}_{n+1}^{\re}$ are the relative incidence structure sheaves on ${W}_{\re}^{[n]}\tim_{\snx\tim X}\he W$ and ${W}_{\re}^{[n+1]}\tim_{\snx\tim X}\he W$.
\item [$_{\ds *}$] $\LL=\oo^{\re}\be(-D_{\re}\he)$, where $D_{\re}\he\suq{W}_{\re}^{[n+1,\, n]}$ is the relative exceptional divisor.
\item [$_{\ds *}$] $\Delta _{\re}\he$ is the relative diagonal in ${W}\tim_{\snx\tim X}\he {W}$ and $\oo^{\re}_{\Delta _{\re}}$ is the associated structure sheaf.
\end{enumerate}
Then we have the following properties which are immediate consequences of the same results in the integrable case:
\begin{lemma}\label{f}
 $\he$\par
\begin{enumerate}
\item [(i)] On ${W}_{\re}^{[n+1,n]}\tim_{\snx\tim X}\he{W}$ we have $j_{\re *}\he\LL =p\ee\be \LL\oti\rho \ee_{\re,W}\oo_{\Delta _{\rel}}^{\rel}$.\label{EqRef}
\item [(ii)] We have an exact sequence on ${W}_{\re}^{[n+1,\, n]}\tim_{\snx\tim X}\he{W}$\emph{:}
\[
\sutrgdpt{j_{\re*}\LL\he}{{\psi }_{W}\ee\, \ti\oo_{n+1}^{\re}}{{\phi }_{W}\ee\,\ti \oo_{n}^{\re}}{.}
\]
\item [(iii)] $\rho \ee_{\re,W}\oo_{\Delta _{\re}}^{\re}=\rho \pee  _{\re,W}\oo_{\Delta _{\re}}^{\re}$,\quad ${{\psi }_{W}\ee\,\ti \oo_{n+1}^{\re}}={{\psi }_{W}\pee  \,\ti \oo_{n+1}^{\re}}$\quad and \quad ${{\phi }_{W}\ee\,\ti \oo_{n}^{\re}}={{\phi }_{W}\pee  \,\ti \oo_{n}^{\re}.}$ 
\end{enumerate}
\end{lemma}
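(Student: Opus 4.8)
The plan is to derive all three assertions from their classical counterparts on integrable nested Hilbert schemes and to transport them to the relative setting via the formalism of relatively coherent \sv\ developed in the appendix. Recall that such a \sh\ is, in relative holomorphic coordinates, modelled on a product $Z\tim V$ with $Z$ a genuine smooth analytic set and $V$ open in the base; hence any canonical and local identity, short exact sequence, or Tor-vanishing statement between relatively coherent \sv\ may be verified fibre by fibre, each fibre being an honest integrable nested Hilbert scheme $\Omega^{[n+1,n]}$ attached to an open set $\Omega$ of $\C^{2}$. All three statements are of precisely this local and canonical nature, so it suffices to exhibit them in the integrable case and to check that the local data glue.

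First I would treat (i). The map $j_{\re}=(\id,\rho_{\re})$ realizes $W_{\re}^{[n+1,n]}$ as the relative graph of the residual map inside $W_{\re}^{[n+1,n]}\tim_{\snx\tim X}W$, and this graph is exactly the preimage of the relative diagonal $\Delta_{\re}$ under $\rho_{\re,W}=\rho_{\re}\tim_{\snx\tim X}\id_{W}$. Fibrewise this yields $j_{\re*}\oo^{\re}=\rho_{\re,W}^{*}\oo_{\Delta_{\re}}^{\re}$, the pullback being underived because $\oo_{\Delta_{\re}}^{\re}$ is flat over the factor that is base-changed (the $\rho$-case of (iii)). Since $p\circ j_{\re}=\id$, the projection formula, available in the relative formalism, then gives $j_{\re*}\LL=j_{\re*}\bigl(j_{\re}^{*}p^{*}\LL\bigr)=p^{*}\LL\oti j_{\re*}\oo^{\re}=p^{*}\LL\oti\rho_{\re,W}^{*}\oo_{\Delta_{\re}}^{\re}$.

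For (ii), the nesting $\xi\subset\xi'$ supplies on each fibre the tautological surjection of structure \sv\ $\oo_{\xi'}\to\oo_{\xi}$, whose kernel $I_{\xi}/I_{\xi'}$ is a length-one \sh\ supported at the residual point, hence on the graph of $\rho$. Globalized, this surjection is the map $\psi_{W}^{*}\ti{\oo}_{n+1}^{\re}\to\phi_{W}^{*}\ti{\oo}_{n}^{\re}$, and the classical computation of \cite{SchHilEGL} identifies its kernel with $j_{\re*}\LL$, the twist by $\LL=\oo^{\re}(-D_{\re})$ recording the identification of the line $I_{\xi}/I_{\xi'}$ along the graph. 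Gluing the local exact sequences produces the global one.

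Finally, (iii) asserts that, on the \sv\ in question, the naive pullback $f^{*}$ agrees with the derived pullback $f^{!}$, which amounts to a Tor-vanishing statement. The relative incidence \sh\ $\ti{\oo}_{n+1}^{\re}$ is flat over $W_{\re}^{[n+1]}$ — flatness of the universal family over the Hilbert scheme, read fibrewise — so its pullback along the base change $\psi_{W}=\psi\tim\id_{W}$ is Tor-independent and therefore underived; the same holds for $\phi$, and for $\rho$ once one notes that $\oo_{\Delta_{\re}}^{\re}$ is flat over the first factor $W$. The mathematical content being classical, the hard part will be organizational: one must verify that the isomorphism in (i), the local exact sequences in (ii), and the flatness used in (iii) are canonical enough to glue into genuine relatively coherent data, and in particular that the kernel in (ii) is matched with the precise twist $\oo^{\re}(-D_{\re})$ rather than with an unspecified line bundle on the graph. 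This last matching, which pins down $\LL$, is the step I expect to demand the most care.
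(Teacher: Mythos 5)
Your proposal is correct and follows exactly the route the paper intends: the paper gives no proof of Lemma \ref{f} beyond the remark that the three statements are ``immediate consequences of the same results in the integrable case,'' and your argument is precisely that reduction, carried out fibrewise in relative holomorphic charts and glued via the formalism of relatively coherent sheaves from the appendix. The details you supply (the graph/diagonal identification plus projection formula for (i), the classical kernel computation identifying the twist $\oo^{\re}(-D_{\re})$ for (ii), and flatness of the incidence sheaves over the factor being base-changed for (iii)) are the correct justifications the paper leaves implicit.
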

\par\bigskip 
\noindent We suppose that there exist a \nbh\ $\tw$ of $Z_{n+1}\he$ in $S^{n+1}\be X\tim X$ and a relative integrable complex structure $J_{n+1}^{\re}$ on it such that $W=\tw\tim_{S^{n+1}\be X}\he (\snx\tim X)$. This means that for all $(\xb,p)$ in $\snx\tim X$, 
$W_{\xb,\,p}\he=\tw_{\xb\, \cup\,  p}\he$ and
$J_{n\tim 1,\, \xb,\,p}^{\re}=J_{n+ 1,\, \xb\, \cup\, p}^{\re}$. Then we get two weak morphisms
\[
\apl{\ti\psi }{{W}_{\re}^{[n+1,\, n]}}{{W}_{\re}^{[n+1]}}\ \textrm{and}\quad\apl{\ti\psi_{W}\he }{{W}_{\re}^{[n+1,\, n]}\tim_{\snx\tim X}\he{W}}{{W}_{\re}^{[n+1]}\tim_{S^{n+1}\be X}\he{W}}
\]
(see Definition \ref{DefDeuxAppenCh4} (ii))
obtained by composing $\psi $ and $\psi _{W}\he$ with the base change map from $S^{n+1}\be X$ to $\snx\tim X$ given by $\flgdba{(\xb,p)}{\,\xb\,\cup\,p}$. Then ${\ti{\psi }_{W}\ee\,\ti \oo_{n+1}^{\re}}={{\psi }_{W}\ee\,\oo_{n+1}^{\re}}$.
Let $\ti{\kappa }_{n\vphantom{1}}\he$ be the class of $T^{\re}\be{W}^{[n]}_{\re}$ in 
$K^{\re}\be\bigl( {W}^{[n]}_{\re}\bigr)$.
\begin{proposition}\label{SansLabBis}
In $K^{\re}\be\bigl( \tw^{[n+1,\, n]}_{\re}\bigr)$ we have
\begin{align*}
 \ti\psi \pee \be \kappa _{n+1}\he={\phi }\pee \be\ti{\kappa } _{n\vphantom{1}}\he+\LL+\LL^{\vee}\be.\, \rho  \pee _{\re}
K_{W}^{\re\vee}-&\rho \pee _{\re}
\bigl( \oo_{W}^{\re}-T^{\re}\be W+K^{\re\vee}_{W}\bigr)\\&-\LL\, .\, {\sigma_{\re}\pe} \be\ti{\oo}_{n}^{\re\vee}-\LL^{\vee}\be.\, \rho \pee _{\re}K_{W}^{\re\vee}\, .\, {\sigma_{\re}\pee} \be\ti{\oo}_{n}^{\re}.
\end{align*}
\end{proposition}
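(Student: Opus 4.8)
The plan is to transport everything to the incidence variety $\tw^{[n+1,\,n]}_{\re}$ and to reproduce, fibrewise, the comparison of \cite{SchHilEGL}. The two ingredients are Proposition \ref{CinqNeuf}, which writes each relative tangent class as $\kappa_{m}\he=p_{*}\he\bigl( \oo^{\re}_{m}+\oo^{\re\vee}_{m}-\oo^{\re}_{m}\,.\,\oo^{\re\vee}_{m}\bigr)$, and Lemma \ref{f}, which relates the incidence sheaves for $n$ and $n+1$. Every sheaf involved restricts on each fibre of $\tw^{[n+1,\,n]}_{\re}\to\snx\tim X$ to the corresponding integrable object over an open set of $\C^{2}$, so the identity will follow from the integrable computation once the relative $K$-theoretic operations are justified.

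First I would set $A=\psi_{W}\ee\,\ti\oo^{\re}_{n+1}$, $B=\phi_{W}\ee\,\ti\oo^{\re}_{n}$ and $C=j_{\re*}\LL$ on ${W}^{[n+1,\,n]}_{\re}\tim_{\snx\tim X}\he W$. By Lemma \ref{f} (ii) one has $A=B+C$ in relative $K$-theory. Applying Proposition \ref{CinqNeuf} for $n$ and $n+1$, pulling back by $\ti\psi$ and $\phi$, and using the base-change compatibilities recorded in Lemma \ref{f} (iii) and just before the proposition (so that $p_{*}$ commutes with $\ti\psi\pee$ and $\phi\pee$), the two classes become $p_{*}\he( A+A^{\vee}-A\,.\,A^{\vee})$ and $p_{*}\he( B+B^{\vee}-B\,.\,B^{\vee})$ respectively. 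Hence
\begin{align*}
\ti\psi\pee\,\kappa_{n+1}\he-\phi\pee\,\ti\kappa_{n}\he
&=p_{*}\he\bigl[ (A+A^{\vee}-A\,.\,A^{\vee})-(B+B^{\vee}-B\,.\,B^{\vee})\bigr]\\
&=p_{*}\he\bigl[ C+C^{\vee}-B\,.\,C^{\vee}-C\,.\,B^{\vee}-C\,.\,C^{\vee}\bigr],
\end{align*}
where I have used $A-B=C$ and $A\,.\,A^{\vee}-B\,.\,B^{\vee}=B\,.\,C^{\vee}+C\,.\,B^{\vee}+C\,.\,C^{\vee}$.

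It then remains to evaluate the five pushforwards, and here I would use Lemma \ref{f} (i), which identifies $C=j_{\re*}\LL=p\ee\be\LL\oti\rho\ee_{\re,W}\oo^{\re}_{\Delta_{\re}}$. For the linear terms: pushing the relative diagonal down along $p$ collapses it to the graph of $\rho_{\re}\he$, which maps isomorphically under $p$, so $p_{*}\he C=\LL$; the dual $p_{*}\he C^{\vee}$ is computed by relative Grothendieck--Serre duality for the codimension-two embedding $j_{\re}\he$, whose normal bundle is $\rho\ee_{\re}\,T^{\re}\be W$, and this is where the relative canonical bundle enters, producing $\LL^{\vee}\be.\,\rho\pee_{\re}K^{\re\vee}_{W}$. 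The three quadratic terms are all supported on the relative diagonal; pushing them forward and applying the self-intersection (Koszul) formula for the relative surface $W$, which produces the factor $\oo^{\re}_{W}-T^{\re}\be W+K^{\re\vee}_{W}$, turns $p_{*}\he(C\,.\,C^{\vee})$ into $\rho\pee_{\re}\bigl( \oo^{\re}_{W}-T^{\re}\be W+K^{\re\vee}_{W}\bigr)$ and, via $\sigma_{\re}\he=(\phi,\rho_{\re}\he)$, turns $p_{*}\he(C\,.\,B^{\vee})$ and $p_{*}\he(B\,.\,C^{\vee})$ into the residual contributions $\LL\,.\,\sigma_{\re}\pe\be\ti\oo^{\re\vee}_{n}$ and $\LL^{\vee}\be.\,\rho\pee_{\re}K^{\re\vee}_{W}\,.\,\sigma_{\re}\pee\be\ti\oo^{\re}_{n}$. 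Collecting the five contributions yields the displayed formula.

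The main obstacle is precisely the justification, inside the relative coherent $K$-theory of the appendix, of the two duality inputs that generate the canonical-bundle terms: Grothendieck--Serre duality for the finite diagonal embedding (needed for $C^{\vee}$) and the self-intersection formula for the relative diagonal (needed for the quadratic terms), together with the base-change statement under which $p_{*}$ commutes with $\ti\psi\pee$ and $\phi\pee$. Since each fibre of $\tw^{[n+1,\,n]}_{\re}$ is an honest integrable incidence scheme, all three hold fibrewise and coincide with the identities used in \cite{SchHilEGL}; the real work is to check that they propagate to the relative setting under the triviality conditions of Definitions \ref{DefDeuxAppenCh4} and \ref{DefSixAppendCh4}. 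Once this bookkeeping is secured, the algebra above reproduces the computation of \cite{SchHilEGL}.
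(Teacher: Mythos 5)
Your proposal is correct and follows essentially the same route as the paper: the same decomposition $A=B+C$ via Lemma \ref{f} (ii), the same use of Proposition \ref{CinqNeuf} together with the base-change statement of Proposition \ref{PropUnBisAppendCh4} (iv) to commute $p_{*}$ past $\ti\psi\pe$ and $\phi\pe$, and the same five pushforwards. The only cosmetic difference is that you evaluate $C^{\vee}$ and the quadratic terms by duality for the embedding $j_{\re}\he$, while the paper routes them through Lemma \ref{f} (i) and the auxiliary square over $W\tim_{\snx\tim X}W$, computing $r_{*}\he\oo^{\re}_{\Delta_{\re}}=\oo^{\re}_{W}$, $r_{*}\he\oo^{\re\vee}_{\Delta_{\re}}=K^{\re\vee}_{W}$ and $r_{*}\he\bigl(\oo^{\re}_{\Delta_{\re}}\,.\,\oo^{\re\vee}_{\Delta_{\re}}\bigr)=\oo^{\re}_{W}-T^{\re}\be W+K^{\re\vee}_{W}$ --- the same Koszul computation in different packaging.
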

\begin{proof}
Let $\apl{\ti{p}}{\tw_{\re}^{[n+1]}\tim_{S^{\, n+1}\be X}\he\tw}{\tw_{\re}^{[n+1]}}$ be the projection on the first factor.
 By Proposition \ref{CinqNeuf}, $\ti\psi \pee \be \kappa _{n+1}\he=\ti\psi \pee \be \ti p_{*}\he\bigl( \oo_{n+1 }^{\re}+\oo_{n+1 }^{\re\vee}-\oo_{n+1 }^{\re}\, .\, \oo_{n+1 }^{\re\vee}\bigr)$. Let us consider the cartesian diagram
\[\xymatrix@C=45pt{W_{\re}^{[n+1,\, n]}
\tim_{\snx\tim X}\he W\ar[r]^(.53){\ti\psi _{W}\he}\ar[d]_{{p}}&\tw_{\re}^{[n+1]}\tim_{S^{\, n+1}\be X}\he\tw\ar[d]^{\ti p}\\
W_{\re}^{[n+1,\, n]}\ar[r]^{\ti\psi }&\tw_{\re}^{[n+1]}
}\]
where the first column lies over $\snx\tim X $ and the second one over $S^{\, n+1}\be X$. Since $\ti p$ is finite on $\supp\bigl( \ore{n+1}\bigr)$, we obtain by Proposition \ref{PropUnBisAppendCh4} (iv) and Lemma \ref{f} (i) and (ii) the following relations in $\kre{}\bigl( W_{\re}^{[n+1,n]}\bigr)$:
\begin{align*}
 \ti\psi \pee \be \kappa _{n+1}\he&={p}_{*}\he\ti\psi \pee _{W}\bigl( \oo_{n+1}^{\re}+\oo_{n+1}^{\re\vee}-\oo_{n+1}^{\re}\, .\, \oo_{n+1}^{\re\vee}\bigr)
={p}_{*}\he{\phi}\pee _{W}\bigl( \ti\oo_{n}^{\re}+\ti\oo_{n}^{\re\vee}-\ti\oo_{n}^{\re}\, .\, \ti\oo_{n}^{\re\vee}\bigr)\\
&\hphantom{=}+{p}_{*}\he\Bigl[{p}\pee \be \LL\, .\, \rho \pee _{\re,W}\, \oo^{\re}_{\Delta _{\re}\he}+
{p}\pee \be \LL^{\vee}\be .\, \rho \pee _{\re,W}\, \oo^{\re\vee}_{\Delta_{\re}\he }-
{p}\pee \be \bigl( \LL\, .\, \LL^{\vee}\be\bigr)\, .\,  \rho \pee _{\re,W}\bigl( \oo^{\re}_{\Delta _{\re}\he}.\, \oo^{\re\vee}_{\Delta _{\re}\he}\bigr) \\
&\hphantom{=+\ti{p}_{*}\he\Bigl[{p}\pee \be \LL\, .\, \rho \pee _{\re,W}\, \oo^{\re}_{\Delta _{\re}\he}}-{p}\pee \be\LL\, .\, \rho \pee _{\re,W}\oo^{\re}_{\Delta _{\re}\he}\, .\, {\phi }_{W}\pee \, \ti\oo^{\re}_{n}-{p}\pee _{\be}\LL^{\vee}.\, \rho \pee _{\re,W}\, \oo^{\re\vee}_{\Delta _{\re}\he}\, .\,\, {\phi }_{W}\pee \, \ti{\oo}_{n}^{\re}
\Bigr].
\end{align*}
\par\medskip 
If $\apl{q}{W^{[n]}_{\re}\tim_{\snx\tim X}\he W}{W^{[n]}_{\re}}$ is the projection on the first factor, then by Proposition \ref{CinqNeuf}, 
$\ti{\kappa }_{n\vphantom{1}}\he=q_{*}\he\bigl( \ti\oo_{n}^{\re}+\ti\oo_{n}^{\re\vee}-\ti\oo_{n}^{\re}\, .\, \ti\oo_{n}^{\re\vee}\bigr)$ so that, since $q$ is finite on $\supp\bigl( \ore{n}\bigr)$, Proposition \ref{PropUnBisAppendCh4} (iv) yields:
${\phi }\pee \be\ti{\kappa } _{n\vphantom{1}}\he={p}_{*}\he{\phi }\pee _{W}\bigl( \ti\oo_{n}^{\re}+\ti\oo_{n}^{\re\vee}-\ti\oo_{n}^{\re}\, .\, \ti\oo_{n}^{\re\vee}\bigr)$. We also consider the diagram
\[\xymatrix@C=45pt{W_{\re}^{[n+1,\, n]}
\tim_{\snx\tim X}\he W\ar[r]^(.53){\rho _{\re,W}\he}\ar[d]_{{p}}&W\tim_{\snx\tim X}\he W\ar[d]^{r}\\
W_{\re}^{[n+1,\, n]}\ar[r]^{\rho _{\re}\he }&W
}\]where all the terms are over $\snx\tim X$. Since $r$ is injective on $\Delta _{\re}\he$, by Proposition \ref{PropUnBisAppendCh4} (iv) again,
\begin{align*}
 \ti\psi \pee \be \kappa _{n+1}\he&={\phi }\pee  \ti{\kappa}_{n\vphantom{1}}\he+\LL\, .\, \rho \pee _{\re}r_{*}\he\oo_{\Delta _{\re}\he}^{\re}+\LL^{\vee}\be.\, \rho \pee _{\re}r_{*}\he\oo_{\Delta _{\re}\he}^{\re\vee}-\rho \pee _{\re}r_{*}\he\bigl(\oo_{\Delta _{\re}\he}^{\re}\,.\,  \oo_{\Delta _{\re}\he}^{\re\vee} \bigr)\\
&-\LL\, .\, {p}_{*}\he\bigl( j_{\re*}\he\oo_{W_{\re}^{[n+1,\, n]}}^{\re}\, .\, {\phi }_{W}\pee \, \ti{\oo}_{n}^{\re\vee}\bigr)
-\LL^{\vee}\be.\, {p}_{*}\he\Bigl[ \bigl( \rho _{\re,W}\pee \oo^{\re}_{\Delta _{\re}\he}\bigr)^{\vee}\be \, .\, {\phi }\pee _{W}\, \ti{\oo}_{n}^{\re}\Bigr].
\end{align*}
Now $r_{*}\he\, \oo_{\Delta _{\re}\he}^{\re}=\oo^{\re}_{W}$, $r_{*}\he\, \oo_{\Delta _{\re}\he}^{\re\vee}=K_{W}^{\re\vee}$ and if $\apl{\delta }{W}{W\tim_{\snx\tim X}\he W}$ is the diagonal injection, $\rho \pee _{\re,W}\, \oo_{\Delta _{\re}\he}^{\re\vee}=\rho \pee _{\re,W}\, \delta _{*}\he\, K_{W}^{\re\vee}=j_{\re*}\he\rho \pee _{\re}\, K_{W}^{\re\vee}$, thanks to Proposition \ref{PropUnBisAppendCh4} (v) and to the diagram
\[
\xymatrix@C=45pt{
W_{\re}^{[n+1,\, n]}\ar[r]^(.4){j_{\re}\he}\ar[d]^{\rho _{\re}\he}&W_{\re}^{[n+1,\, n]}\tim_{\snx\tim X}\he W\ar[d]^{\rho _{\re,W}\he}\\
W\ar[r]^(0.4){\delta }&W\tim_{\snx\tim X}\he W}
\]
\end{proof}
\subsection{Cohomological computations}\label{qwe}
We are now going to consider the identity we obtained in
Section \ref{Comparaison} from a cohomological point of view.
Recall that the classes $\mu _{i,n}\he$ in $H^{2i}\be\bigl( \xn\tim X,\Q\bigr)$ are defined by $\mu _{i,n}\he=c_{i}\he\bigl(\oo_{n}^{\, \rel}\bigr)_{\vert\xn\tim X}\he$. 
Let us consider the following maps:
\[
\xymatrix@R=8pt{
&X&\\
&&\\
&\xna{n+1,\, n}\ar[uu]_-{\rho }\ar[dl]_-{\lambda }\ar[dr]^-{\nu }\ar[dd]^-{\sigma }\\
\xn&&\xna{n+1}\\
&\xn\tim X&
}
\]
\begin{lemma}\label{EtToc}
 $\he$\par
\begin{enumerate}
 \item [(i)] $c_{i}\he\bigl( {\phi }\pee \be\ti{\kappa}_{n\vphantom{1}}\he\bigr)=\lambda \ee\be c_{i}\he\bigl( \xn\bigr)$.
\item[(ii)] $c_{i}\he\bigl( {\phi }_{W}\pee \, \ti{\oo}_{n}^{\re}\bigr)=(\lambda ,\id)\ee\be \mu _{i,n}\he$.
\item[(iii)] $c_{i}\he\bigl( {\sigma }\pee_{\re}\, \ti{\oo}_{n}^{\re}\bigr)_{\vert \, \xna{n+1,\, n}}\he=\sigma \ee\be \mu _{i,n }\he$.
\item[(iv)] $c_{i}\he\bigl( \rho _{\re}\ee\, T^{\re}\be W\bigr)_{\vert\, \xna{n+1,\, n}}=\rho \ee\be c_{i}\he(X)$.
\item[(v)] $c_{i}\he\bigl( \rho \pee _{\re,W}\, \oo_{\Delta _{\re}\he}^{\re}\bigr)_{\vert\, \xna{n+1,\, n}\tim X}\he=(\rho  ,\id)\ee c_{i}\he\bigl( \ci_{\Delta _{X}\he}\bigr)$.
\end{enumerate}

\end{lemma}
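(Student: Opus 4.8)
The plan is to read all five identities as instances of a single principle: the assignment $\ff \mapsto [\ff^{\infty}] \mapsto c_{i}$, sending a relatively coherent \sh\ to the Chern classes of its topological class (Proposition \ref{SansLab}, Proposition \ref{Compl}), is natural with respect to the relative pullback $f\pee$, and the restriction of such a class to an almost-complex sub-object turns the relative pullback $f\pee$ into the ordinary pullback $f\ee$ along the underlying absolute map. So for each of the relative morphisms $\phi$, $\phi_{W}$, $\sigma_{\re}$, $\rho_{\re}$, $\rho_{\re,W}$ of Section \ref{Comparaison}, I would first record that its restriction to $\xna{n+1,n}$ (resp.\ $\xna{n+1,n}\tim X$) is the absolute map $\lambda$, $(\lambda,\id)$, $\sigma$, $\rho$, $(\rho,\id)$ --- this is built into the definitions of those maps --- and then invoke naturality of the topological class under relative pullback (Proposition \ref{PropUnBisAppendCh4}) to reduce each claim to a Chern-class computation already available on the base over which the relevant \sh\ lives.

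Concretely: for (i), Lemma \ref{k} gives $c_{i}(\ti{\kappa}_{n})_{\vert\xn}=c_{i}(\xn)$, since the topological class of $\ti{\kappa}_{n}=[T^{\re}W^{[n]}_{\re}]$ restricts on $\xn$ to $[T\xn]$; applying naturality to $\phi\pee$ and restricting, where $\phi$ becomes $\lambda$, yields $\lambda\ee c_{i}(\xn)$. For (ii) and (iii) I would use that, by Definition \ref{c} together with the base-change identity recorded in the proof of Proposition \ref{d}, $\mu_{i,n}=c_{i}(\ti{\oo}_{n}^{\re})_{\vert\xn\tim X}$; then naturality under $\phi_{W}$ and under $\sigma_{\re}=(\phi,\rho_{\re})$ (which restrict to $(\lambda,\id)$ and $\sigma=(\lambda,\rho)$) produces $(\lambda,\id)\ee\mu_{i,n}$ and $\sigma\ee\mu_{i,n}$ respectively. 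For (iv), the relative tangent bundle $T^{\re}W$ restricts on each fiber to $TX$ and $\rho_{\re}$ covers the residual morphism $\rho$, whence $\rho\ee c_{i}(X)$. Finally, for (v), the relative diagonal \sh\ $\oo_{\Delta_{\re}}^{\re}$ restricts fiberwise to the smooth diagonal \sh\ $\ci_{\Delta_{X}}$ on $X\tim X$ while $\rho_{\re,W}$ restricts to $(\rho,\id)$, giving $(\rho,\id)\ee c_{i}(\ci_{\Delta_{X}})$.

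The content here is bookkeeping rather than geometry, and the step I expect to need the most care is precisely this matching of restrictions: verifying that each relative morphism restricts to the stated absolute morphism over the correct fiber product, and that the two incidence sheaves in play --- $\oo_{n}^{\re}$ on $W^{[n]}_{\re}\tim_{\snx}W$ and $\ti{\oo}_{n}^{\re}$ on $W^{[n]}_{\re}\tim_{\snx\tim X}W$ --- induce the \emph{same} class $\mu_{i,n}$ on $\xn\tim X$. That last compatibility is exactly the base-change computation already used in the proof of Proposition \ref{d} (via the three-term diagram there). Once these identifications are fixed, each of (i)--(v) is immediate from the naturality of $\ff\mapsto c_{i}([\ff^{\infty}])$ under relative pullback, so the proof should be short; the only genuine subtlety is checking that restriction to the almost-complex locus commutes both with the relative pullback $f\pee$ and with the passage to the topological class, which is precisely what the appendix formalism provides.
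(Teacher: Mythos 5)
Your proposal follows essentially the same route as the paper: the paper likewise reduces (i)--(iii) to a compatibility of the relative integrable structures --- invoking Proposition \ref{Compl} (ii) to replace $J^{\re}_{n\tim 1}$ by one restricting a structure $J^{\re}_{n}$ parametrized by $\snx$, after which the identifications are immediate --- and handles (iv)--(v) by base-changing $W$ (resp.\ an extension of it over $Z_{n\tim 1\tim 1}$) along the diagonal to a \nbh\ $\widehat{W}$ of $\Delta_{X}$ in $X\tim X$, which is the precise form of your ``restricts fiberwise'' step. The only point to make explicit in your write-up is that initial reduction via deformation invariance of the topological class: the structures $J^{\re}_{n\tim 1}$ and $J^{\re}_{n}$ are a priori unrelated, and the comparison of $\ti{\oo}^{\re}_{n}$ with $\oo^{\re}_{n}$ (your base-change step borrowed from Proposition \ref{d}) only makes sense once they have been arranged to be compatible.
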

\begin{proof}
By Lemma \ref{Compl} (ii), the classes ${\phi }\pee \be\ti{\kappa}_{n\vphantom{1}}\he$, $ {\phi }_{W}\pee \, \ti{\oo}_{n}^{\re}$ and ${\sigma }\pee_{\re}\, \ti{\oo}_{n}^{\re}$ in topological \mbox{$K$-theory} are independent of the relative integrable structure $J^{\re}_{n\tim 1}$. Therefore, we can suppose that there exist a \nbh\ $\breve{W}$ of $Z_{n}\he$ in $\snx\tim X$ and a relative integrable structure $J_{n}^{\re}$ on it such that $\breve{W}\tim_{\snx}\he (\snx\tim X)\suq W$. This means that for $(\xb,p)$ in $\snx\tim X$, $\breve{W}_{\xb}\he\suq W_{\xb,\,p}\he$ and $J_{n\tim 1,\,\xb,\,p}^{\re}{}_{\vert \breve{W}_{\xb}\he}\he=J^{\re}_{n,\,\xb}$. Then (i), (ii) and (iii) are straightforward.
\par
For (iv), let $\widehat{W}=W\tim_{\snx}\he X$, where the base change morphism is given by the diagonal injection of $X$ in $\snx$. We consider the following diagram: 
\[
\xymatrix@C=45pt{W_{\re}^{[n+1,\, n]}\ar[r]^-{\rho _{\rel}\he}&W&\,\widehat{W}\ar@{_{(}->}[l]\\
\xna{n+1,\, n}\ar@{^{(}->}[u]\ar[r]_-{\rho }&X\he\ar@{^{(}->}[u]\ar@{=}[r]&X\he\ar@{^{(}->}[u]
}
\]
Then, $c_{i}\he\bigl( \rho _{\re}\ee\, T^{\re}\be W\bigr)_{\vert\, \xna{n+1,\, n}}=\rho  \ee\be c_{i}\he\bigl( T^{\re}\be\widehat{W}\bigr)_{\vert X}\he$. Since $\widehat{W}$ is a \nbh\ of 
$\Delta _{X}\he$ in $X\tim X$, 
$T^{\re}\widehat{W}_{\vert X}\he\simeq TX$, so that 
$c_{i}\he\bigl( T^{\re}\be\widehat{W}\bigr)_{\vert X}\he=c_{i}\he(X)$.
\par\medskip 
For (v), we extend the structure $J_{n\tim 1}^{\, \rel}$ to a relative integrable structure
$J_{n\tim 1\tim 1}^{\, \rel}$ in a \nbh\ $\ba W$ of 
$Z_{n\tim 1\tim 1}\he$ such that for any $(\xb,p,q)\in \snx\tim X\tim X $ near the incidence locus $p=q$, the equality $J_{n\tim 1\tim 1,\,\xb,\,p,\,q}^{\, \rel}=J_{n\tim 1,\,\xb,\,p}^{\, \rel}$ holds. 
This means that $W\tim_{\snx\tim X}\he(\snx\tim X\tim X)\suq\ba{W}$. We define $\widehat{W}$ by $\widehat{W}=\ba{W}\tim_{\snx\tim X\tim X}\he (X\tim X)$, the base change morphism from $\snx\tim X\tim X$ to $X\tim X$ being given by $\flgdba{(x,x')}{(\delta (x), x,x')}$, where $\apl{\delta }{X}{\snx}$ is the diagonal injection.
Then we consider the diagram:
\[
\xymatrix@C=50pt@R=25pt{
W^{[n+1,n]}_{\re}\tim_{\snx\tim X}\he W\ar[r]^-{\rho _{\rel,\,W}\he}\ar@{^{(}->}[d]&W\tim_{\snx\tim X}\he W\ar@{^{(}->}[d]\\
\ba W^{[n+1,n]}_{\re}\tim_{\snx\tim X\tim X}\ba{W}\ar[r]^-{\rho _{\rel,\,\ba{W}}\he}&\ba W\tim_{\snx\tim X\tim X}\he \ba W&\,\widehat{W}\tim_{X\tim X}\he\widehat{W}\ar@{_{(}->}[l]\\
\xna{n+1,n}\tim X\he\ar[r]_-{(\rho ,\,\id)}\ar@{^{(}->}[u]&X\tim X\he\ar@{^{(}->}[u]&X\tim X\he\ar@{=}[l]\ar@{^{(}->}[u]_(.45){\iota}
}
\]
If $\Delta _{\widehat{W}}^{\re}$ is the relative diagonal in $\widehat{W}\tim_{X\tim X}\he\widehat{W}$, then $\iota \ee\be \bigl[ \ci_{\Delta _{\widehat{W}}^{\re}}\bigr]=\bigl[ \ci_{\Delta _{X}\he}\bigr]$ in $K(X\tim X)$. This gives the result.
\end{proof}
\begin{remark}
Let $d_{i}\he$ be the \mbox{$i$-th} Chern class of $\ci_{\Delta _{X}\he}$ in $H^{2i}\be(X\tim X,\Q)$. By \cite{SchHilAtHi}, $d_{i}$ is a universal polynomial in $c_{1}\he(X)$ and $c_{2}\he(X)$ with rational coefficients. 
\end{remark}
\begin{proposition}\label{LaDer}
 $\he$\par
\emph{(i)} For all $i,n$ in $\N\ee\be$, $(\nu ,\id)\ee\be\mu _{i,n+1}\he-(\lambda ,\id)\ee\be\mu _{i,n}\he=\ds\sum_{k=0}^{i}\pr\ee_{1}\, l^{\, k}\be\, .\, (\rho ,\id)\ee\be\, d_{i-k}$, where 
$\mu _{i,n}\he$ and $l$ are defined in \ref{c} and \ref{l}. 
\par\medskip 
\emph{(ii)} For all $i,n$ in $\N\ee\be$, $\nu \ee\be c_{i}\he\bigl( \xna{n+1}\bigr)-\lambda \ee\be c_{i}\he\bigl( \xn\bigr)$ is a universal polynomial in the classes $l$, $\rho \ee\be c_{i}\he(X)$ and $\sigma \ee\be\mu _{i,n}\he$.
\end{proposition}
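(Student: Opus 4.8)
The plan is to derive both identities by applying the total Chern class to the $K$-theoretic relations already established on the relative incidence spaces, and then restricting to the genuine incidence variety, where Lemma \ref{EtToc} translates every tautological $K$-theory class into the geometric classes $\mu_{i,n}$, $l$, $\rho\ee c_i(X)$ and $\sigma\ee\mu_{i,n}$. For (i) I would begin with the residual short exact sequence of Lemma \ref{f}(ii) on $W_{\re}^{[n+1,n]}\tim_{\snx\tim X}W$, namely $0\to j_{\re*}\LL\to\psi_W\ee\,\ti\oo_{n+1}^{\re}\to\phi_W\ee\,\ti\oo_n^{\re}\to 0$, and use the multiplicativity of the total Chern class. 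Restricting to $\xna{n+1,n}\tim X$ and invoking Lemma \ref{EtToc}(ii) together with its evident analogue for $\psi$ identifies $c_i\bigl(\psi_W\ee\,\ti\oo_{n+1}^{\re}\bigr)$ with $(\nu,\id)\ee\mu_{i,n+1}$ and $c_i\bigl(\phi_W\ee\,\ti\oo_n^{\re}\bigr)$ with $(\lambda,\id)\ee\mu_{i,n}$, so that the entire content of (i) is reduced to the computation of the total Chern class of $j_{\re*}\LL$.

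By Lemma \ref{f}(i) we have $j_{\re*}\LL=p\ee\LL\oti\rho\ee_{\re,W}\oo_{\Delta_{\re}}^{\re}$, the twist of the pulled-back relative diagonal sheaf by the line bundle $p\ee\LL$, whose restricted first Chern class is $\pr_1\ee l$ and whose diagonal factor carries the restricted Chern classes $(\rho,\id)\ee d_i$ of Lemma \ref{EtToc}(v). The core computation is thus the effect of this twist, which I would run through the Grothendieck--Riemann--Roch theorem for the codimension-two diagonal embedding — the very tool that, by the Remark, makes each $d_i$ a universal polynomial in $c_1(X)$ and $c_2(X)$. Because $\oo_{\Delta}$ is a rank-zero sheaf with $d_1=0$, twisting by $p\ee\LL$ should collapse to the convolution $\sum_{k=0}^i\pr_1\ee l^{\,k}\,(\rho,\id)\ee d_{i-k}$. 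The main obstacle is precisely to confirm that this closed form is what survives: one must use that the first nonvanishing incidence class is $\mu_{2,n}$ (since the first Chern class of a codimension-two sheaf vanishes) to kill the lowest cross terms in the multiplicative expansion, and keep careful track of the higher cross products against $(\lambda,\id)\ee\mu_{b,n}$, which are governed by the intersection of the residual graph with the incidence locus and are handled in the projective-limit local cohomology framework carrying the refined classes. The legitimacy of importing these integrable-model computations to the relative topological classes rests on Proposition \ref{SansLab} and Proposition \ref{Compl}.

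For (ii) I would instead start from the $K$-theoretic tangent comparison of Proposition \ref{SansLabBis}, which writes $\ti\psi\pee\kappa_{n+1}$ in $K^{\re}\bigl(\tw_{\re}^{[n+1,n]}\bigr)$ as $\phi\pee\ti\kappa_n$ plus an explicit combination of $\LL$, $\LL^{\vee}$, $\rho\pee_{\re}K_W^{\re\vee}$, $\rho\pee_{\re}T^{\re}W$, $\oo_W^{\re}$ and $\sigma_{\re}\pee\ti\oo_n^{\re}$. Restricting to $\xna{n+1,n}$, Lemma \ref{k} identifies $\ti\psi\pee\kappa_{n+1}$ with $\nu\ee[TX^{[n+1]}]$ and $\phi\pee\ti\kappa_n$ with $\lambda\ee[TX^{[n]}]$, while Lemma \ref{EtToc}(iii)--(iv) turns the remaining terms into $l$, the classes $\rho\ee c_i(X)$ (to which the canonical and tangent factors of $X$ reduce) and $\sigma\ee\mu_{i,n}$. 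Passing to total Chern classes and invoking the splitting principle — whereby the Chern classes of any $K$-theory expression built from sums, tensor products and duals are universal polynomials in the Chern classes of its constituents — the difference $\nu\ee c_i\bigl(X^{[n+1]}\bigr)-\lambda\ee c_i\bigl(X^{[n]}\bigr)$ is exhibited as a universal polynomial in $l$, $\rho\ee c_i(X)$ and $\sigma\ee\mu_{i,n}$. Here the only real work is the organization of the several terms of Proposition \ref{SansLabBis}; no geometric input is needed beyond Lemma \ref{EtToc} and the computation underlying (i).
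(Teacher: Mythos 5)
Your proposal follows exactly the route of the paper, whose entire proof of this proposition is the single sentence ``This is a consequence of Lemma \ref{EtToc}, Lemma \ref{f} (i) and (ii), and of Proposition \ref{SansLabBis}'': for (i) you combine the exact sequence of Lemma \ref{f} (ii) with the expression $j_{\re*}\LL=p\ee\LL\oti\rho\ee_{\re,W}\oo^{\re}_{\Delta_{\re}}$ of Lemma \ref{f} (i) and the restriction identities of Lemma \ref{EtToc}, reducing everything to the Grothendieck--Riemann--Roch computation for the residual graph, and for (ii) you restrict Proposition \ref{SansLabBis} and apply Lemma \ref{EtToc} together with the splitting principle. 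The only part you leave unverified --- the precise Chern-class bookkeeping for the twisted diagonal sheaf, including the cross terms against $(\lambda,\id)\ee\mu_{b,n}$ --- is exactly the part the paper also leaves implicit, and you correctly identify it as the one nontrivial computation.
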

\begin{proof}
 This is a consequence of Lemma \ref{EtToc}, Lemma \ref{f} (i) and (ii), and of Proposition \ref{SansLabBis}.
\end{proof}
We are now going to perform the induction step.
\begin{proposition}\label{11}
If $n,m$ are positive integers, let $P$ be a polynomial in 
$
\pr_{0}\ee c_{i}\he\bigl( \xna{n+1}\bigr)$, $\pr\ee_{0k}\, \mu _{i,n+1}\he$, $\pr\ee_{kl}\, d_{i}\he$, $\pr_{k}\ee c_{i\he}(X)$ $ (1\le k,l\le m)$, which are cohomology classes on $\xna{n+1}\tim X^{\vphantom{[}m}\be.
$
Then there exists a polynomial $\ti{P}$ depending only on $P$, in the classes analogously defined on $\xn\tim X^{\vphantom{[}m+1}\be$, such that 
$\ds\int_{\xna{n+1}\tim X^{\vphantom{[}m}\be}P=\int_{\xn\tim X^{\vphantom{[}m+1}\be}\ti{P}$.
\end{proposition}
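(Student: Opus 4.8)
The plan is to use the incidence variety $\xna{n+1,n}$ as a correspondence between $\xna{n+1}$ and $\xn\tim X$, transporting the integral of $P$ across it by pulling back along $\nu$ and pushing forward along $\sigma$. Recall the canonical maps $\lambda:\xna{n+1,n}\to\xn$, $\nu:\xna{n+1,n}\to\xna{n+1}$, $\rho:\xna{n+1,n}\to X$, and set $\sigma=(\lambda,\rho):\xna{n+1,n}\to\xn\tim X$. The map $\nu$ is generically finite of degree $n+1$ (a generic length-$(n+1)$ subscheme contains exactly $n+1$ subschemes of length $n$), whereas $\sigma$ is birational onto $\xn\tim X$ (away from the incidence locus the residual point determines the bigger scheme). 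Both facts persist here: by Theorem \ref{Ch4suite2ThDeux} the relative models are topological fibrations whose fibres coincide with the classical objects, so these degrees are computed fibrewise. Consequently, since $(\nu,\id)$ has degree $n+1$ and $(\sigma,\id)$ is birational between spaces of the same dimension, for the top-dimensional class $P$ one has
\[
\int_{\xna{n+1}\tim X^{m}}P=\frac{1}{n+1}\int_{\xna{n+1,n}\tim X^{m}}(\nu,\id)\ee P=\frac{1}{n+1}\int_{\xn\tim X^{m+1}}(\sigma,\id)_{*}\he(\nu,\id)\ee P,
\]
where the last equality uses that $(\sigma,\id)_{*}$ preserves integrals of top classes.

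First I would rewrite the pullback $(\nu,\id)\ee P$ as a polynomial in the single class $l=c_{1}\he\bigl[\ore{}(-D_{\re})\bigr]$ with coefficients pulled back from $\xn\tim X^{m+1}$ via $(\sigma,\id)$. Indeed, Proposition \ref{LaDer}~(ii) expresses $\nu\ee c_{i}\he(\xna{n+1})$ as a universal polynomial in $l$, $\lambda\ee c_{i}\he(\xn)=\sigma\ee\pr_{0}\ee c_{i}\he(\xn)$, $\rho\ee c_{i}\he(X)=\sigma\ee c_{i}\he(X)$ and $\sigma\ee\mu_{i,n}$; Proposition \ref{LaDer}~(i) rewrites $(\nu,\id)\ee\mu_{i,n+1}$ as $(\lambda,\id)\ee\mu_{i,n}$ plus terms $\pr\ee l^{k}\,.\,(\rho,\id)\ee d_{i-k}$; and the generators coming from the $X^{m}$ factors (the $d_{i}$ and the $c_{i}\he(X)$) are untouched, each being a $(\sigma,\id)$-pullback. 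Since the residual copy of $X$ (the image of $\rho$) plays the role of the new $(m+1)$-st factor, this produces an expression $(\nu,\id)\ee P=\sum_{a}(\sigma,\id)\ee R_{a}\,.\,\pr\ee l^{\,a}$, in which each $R_{a}$ is a polynomial on $\xn\tim X^{m+1}$ in exactly the generators $\pr_{0}\ee c_{i}\he(\xn)$, $\pr\ee\mu_{i,n}$, $\pr\ee d_{i}$ and $\pr\ee c_{i}\he(X)$.

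Next I would push forward along $(\sigma,\id)=\sigma\tim\id_{X^{m}}$. By the projection formula together with the product formula $(\sigma\tim\id)_{*}(\xi\tim 1)=(\sigma_{*}\xi)\tim 1$ over the untouched factors,
\[
(\sigma,\id)_{*}\bigl[(\sigma,\id)\ee R_{a}\,.\,\pr\ee l^{\,a}\bigr]=R_{a}\,.\,(\sigma,\id)_{*}\pr\ee l^{\,a}=R_{a}\,.\,\pr\ee\mu_{a,n},
\]
where the last step is exactly $\sigma_{*}\he(l^{a})=\mu_{a,n}$ from Proposition \ref{d} (valid for $a\ge 0$, with $\mu_{0,n}=1$ since $\sigma$ is birational). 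Setting $\ti P=\tfrac{1}{n+1}\sum_{a}R_{a}\,.\,\pr\ee\mu_{a,n}$ yields a polynomial in precisely the classes analogously defined on $\xn\tim X^{m+1}$, and by the chain of equalities above $\int_{\xna{n+1}\tim X^{m}}P=\int_{\xn\tim X^{m+1}}\ti P$. Moreover $\ti P$ depends only on $P$ and $n$, since every formula invoked (Propositions \ref{LaDer} and \ref{d}) is universal.

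The main obstacle is not this formal manipulation but the justification of the Gysin pushforward $(\sigma,\id)_{*}$ and of the degree and birationality statements in the almost-complex, merely stratified setting, where $\lambda,\nu,\rho$ are only canonical up to homotopy. I would handle this exactly as in the proof of Proposition \ref{d}: the incidence varieties carry fundamental homology classes and are rationally smooth, so Gysin maps exist with $\Q$-coefficients, and the intersection-theoretic identities ($\deg\nu=n+1$, birationality of $\sigma$, and $\sigma_{*}l^{a}=\mu_{a,n}$) are read off the classical integrable fibres and then propagated by the relative constructions. The only point genuinely requiring care is the bookkeeping of which projection each surviving class is pulled back from, so that $\ti P$ lands among the prescribed generators on $\xn\tim X^{m+1}$.
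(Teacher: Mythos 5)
Your proposal is correct and follows essentially the same route as the paper: the same incidence correspondence $\xna{n+1,n}\tim X^{m}$ with $(\nu,\id)$ of degree $n+1$ and $(\sigma,\id)$ of degree $1$, the same use of Proposition \ref{LaDer} to expand the pulled-back generators in powers of $l$, and the same application of the projection formula with Proposition \ref{d} to convert $(\sigma,\id)_{*}\he l^{\,a}$ into $\mu_{a,n}$ (up to the sign $(-1)^{a}$, which is immaterial since it is absorbed into the universal polynomial). The only differences are presentational — your explicit organization of $(\nu,\id)\ee P$ as $\sum_{a}(\sigma,\id)\ee R_{a}\,.\,\pr\ee l^{\,a}$ spells out the bookkeeping the paper leaves implicit.
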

\begin{proof}
 We consider the incidence diagram
\par
\begin{pspicture}(-3.5,-3)(10,1)
\psset{xunit=0.2ex,yunit=0.2ex}
\psset{linewidth=0.1ex}
\rput(100,10){$\xna{n+1,n}\tim X^{m}\be$}
\rput(15,-40){$\xna{n+1}\tim X^{m}\be$}
\rput(185,-40){$\bigl( \xn\tim X\bigr)\tim X^{m}\be$}
\psline{->}(62,3)(25,-30)
\psline{->}(137,3)(174,-30)
\rput(35,-7){$\scriptstyle{(\nu ,\id)}$}
\rput(165,-7){$\scriptstyle{(\sigma ,\id)}$}
\end{pspicture}
\par
Since $(\nu ,\id)$ and $(\sigma ,\id)$ are generically finite of degrees $n+1$ and $1$, 
\[
\ds\int _{\xna{n+1}\tim X^{m}\be}P=\frac{1}{n+1}\int_{\xn\tim X^{m+1}\be}(\sigma ,\id)_{*}\he(\nu ,\id)\ee\be P.
\] 
By Proposition \ref{LaDer} (ii), 
$(\nu ,\id)\ee\be\, pr_{0}\ee\, c_{i}\he\bigl( \xna{n+1}\bigr)-(\sigma, \id)\ee\, pr_{0}\ee\, c_{i}\he\bigl( \xn\bigr)$ is a polynomial in the classes $pr_{0}\ee\, l$, $(\sigma ,\id)\ee\be \, pr_{1}\ee \, c_{i}\he(X)$ and $(\sigma,\id)\ee\be \, pr_{01}\ee\, \mu _{i,n}\he$. 
\par 
By Proposition \ref{d}, $(\sigma ,\id)_{*}\he\,  l^{\, i}=(-1)^{i}\be\, pr_{01}\ee\, \mu _{i,n}\he$. Thus, $(\sigma ,\id)_{*}\he\, (\nu ,\id)\ee\be\, pr_{0}\ee\, c_{i}\he\bigl( \xna{n+1}\bigr)$ is a polynomial in 
$pr_{01}\ee\, \mu _{i,n}\he$ and $pr_{1}\ee\, c_{i}\he(X)$. 
\par 
By Proposition \ref{LaDer} (i), 
$(\nu ,\id)\ee\be\, pr_{0k}\ee\, \mu _{i,n+1}\he-(\sigma ,\id)\ee\be\, pr_{0,k+1}\ee\, \mu _{i,n}\he$ is a polynomial in the classes $pr_{0}\ee\, l$ and $(\sigma \id)\ee\be\, pr_{1k}\ee\, d_{i}\he$. Then we can use Proposition \ref{d} again. 
\par 
To conclude, we use the relations 
$(\nu,\id)\ee\be\, pr_{kl}\he\, d_{i}\he=(\sigma ,\id)\ee\be pr_{k+1,\, l+1}\he\, d_{i}\he$ and $(\nu ,\id)\ee\be\, pr_{k}\ee\, c_{i}\he(X)=(\sigma ,\id)\ee\be\,  pr_{k+1}\ee\, c_{i}\he(X)$. 
\end{proof}
We can now finish the proof of Theorem \ref{QUATRE}. We write 
\[
\int_{\xn}P\Bigl( c_{1}\he\bigl( \xn\bigr),\dots,c_{2n}\bigl( \xn\bigr)\Bigr)=\int_{\xna{n-1}\tim X}\ti{P}_{1}\he=\int_{\xna{n-2}\tim X^{2}\be}\ti{P}_{2}\he
=\dots=\int_{X^{n}\be}\ti{P}
\] where $\ti{P}$ is a polynomial in the classes $pr_{k}\ee\, c_{i}\he(X)$ and $pr_{kl}\ee\, d_{i}\he$. Since $d_{i}\he$ is a polynomial in $c_{1}\he(X)$ and $c_{2}(X)$, we are done.
\begin{flushright}
 $\square$
\end{flushright}
\section{Appendix}\label{AppendCh4}
Our aim in this appendix is to develop a general framework for \sv\ on spaces which are fibered in smooth analytic spaces over a differentiable orbifold. This formalism is somehow heavy but necessary to carry out the computations of \cite{SchHilEGL} in a relative setting.
\subsection{Relative analytic spaces}\label{n}
Let $M$ be a smooth manifold and let $G$ be a finite subgroup of $\textrm{Diff}(M)$ (in fact, any effective differentiable orbifold would do, but no such generality is required here). Let $B=M/G$. If $\apl{p}{M}{B}$ is the projection, $B$ will be endowed with the \sh\ of rings 
$\ci_{B}:=p_{*}\he\bigl( \ci_{M}\bigr)^{G}\be\!\!.$ 
\begin{definition}\label{DefUnAppenCh4}
$\he$
\begin{enumerate}
 \item [(i)] A \emph{\rsas\ over $B$} is a ringed topological space $\bigl( \xg,\ore{\xg}\bigr)$ endowed with a surjective morphism $\apl{\pi }{\bigl( \xg,\ore{\xg}\bigr)}{\bigl( B,\ci_{B}\bigr)}$ such that $\bigl( \xg,\ore{\xg}\bigr)$ is locally on $\xg$ isomorphic over $B$ to a ringed topological space of the type $\bigl( Z\tim V,\ore{Z\tim V}\bigr)$, where $V$ is an open subset of $B$, $Z$ is a smooth analytic space and $\ore{Z\tim V}$ is the \sh\ of differentiable functions on $Z\tim V$ which are holomorphic on the slices $Z\tim\{v\}$, $v\in V$.
\item[(ii)] Let $\bigl( \xg,\ore{\xg}\bigr)$ be a \rsas\ over $B$ and let $\zg$ be a subset of $\xg$. We say that $\zg$ is \emph{a relative analytic subspace of $B$} if for all $x\in\xg$ and for all relative holomorphic chart $\apliso{\phi }{Z\tim V}{\,U_{x}\he}$, $\phi ^{-1}\bigl( \zg_{\vert U_{x}}\he\bigr)=Z'\tim V$, where $Z'$ is a (possibly singular) analytic subset of $Z$.
\end{enumerate}
\end{definition}
\begin{remark}\label{RemUnAppendCh4}
$\he$\par
-- If $\bigl( \xg,\ore{\xg}\bigr)$ is a \rsas\ over $B$, the fibers $\bigl(\xg_{b}\he\bigr)_{b\in B}\he$ are smooth analytic sets, but they do not form in general a fibration over $B$, since the projection map $\pi $ is not proper. However, they locally vary in a trivial way on $\xg$.
\par
-- The main example of a relative smooth analytic space we can keep in mind is $W^{[n]}_{\re}$ over $\snx$ where $W$ is a \nbh\ of the incidence set in $\snx\tim X$, endowed with a relative integrable structure parametrized by $\snx$.
\end{remark}
We now introduce morphisms between relative smooth analytic spaces.
\begin{definition}\label{DefDeuxAppenCh4}
 $\he$
\begin{enumerate}
 \item [(i)] Let $\xg$ and $\xgp $ be two \rsas s over $B$ and $\apl{f}{\xg}{\xgp }$ be a continuous map over $B$. We say that $f$ is a \emph{morphism} if for all $x\in \xg$ we can find trivializations of $\xg$ and $\xgp $ around $x$ and $f(x)$  in which $\apl{f}{Z\tim V}{Z'\tim V}$ has the form $\flgdba{(z,v)}{(g(z),v)}$, where $\apl{g}{Z}{Z'}$ is holomorphic.
\item[(ii)] Let $\xg$ and $\xgp $ be two \rsas s over $B$ and $B'$ and $\apl{f}{\xg}{\xgp }$ be a continuous map. We say that $f$ is a \emph{weak morphism} if there exist a smooth map 
$\apl{u}{B}{B'}$ and a morphism $\apl{\ti{f}}{\xg}{\xgp \tim_{B'}\he B}$ over $B$ such that $f$ is obtained by composing $\ti{f}$ with the base change morphism 
$\apl{u}{\xgp \tim_{B'}\he B}{\xgp .}$
\end{enumerate}
\end{definition}
\begin{remark}\label{RemDeuxAppendCh4}
 $\he$\par
-- If $\xg$ and $\xgp $ are two \rsas s over $B$ and $B'$, a continuous map $\apl{f}{\xg}{\xgp }$ over a smooth map $\apl{u}{B}{B'}$ which is holomorphic in the fibers is not a weak morphism in general.
\par
-- Weak morphisms can be characterized by their expression in suitable charts as it is the case for morphims. Indeed, $\apl{f}{\xg}{\xgp }$ is a weak morphism if for all $x\in\xg$ there exist trivializations of $\xg$ and $\xgp $ around $x$ and $f(x)$ in which 
$\apl{f}{Z\tim V}{Z'\tim V'}$ has the form $\flgdba{(z,v)}(\!g(z),u(v))$, where $\apl{g}{Z}{Z'}$ is holomorphic and $\apl{u}{V}{V'}$ is smooth.
\par
-- The term ``morphism'' has to be carefully understood because morphisms cannot \emph{a priori} be composed in this context. 
\end{remark}
\subsection{Relatively coherent sheaves}\label{q}
We introduce now the main object of our study.
\begin{definition}\label{DefQuatreAppendCh4}
 Let $\xg$ be a \rsas\ over $B$. A \sh\ $\ff$ of $\ore{\xg}$-modules will be \emph{\rc\ } if for all $x\in\xg$, if $\apliso{\phi }{Z\tim V}{\,U_{x}\he}$ is a trivialization of $\xg$ in a \nbh\ $U_{x}\he$ of $x$, there exists a coherent \sh\ $\baf$ on $Z$ such that 
\[
\phi ^{-1}\ff_{\vert U_{x}\he}\he\simeq \pru\baf\oti_{\pru\oz }\ore{Z\tim V}
\] 
as \sv\ of $\ore{Z\tim V}$-modules.
\end{definition}
\begin{remark}\label{RemQuatreAppenCh4}
 $\he$\par 
-- If $\xg$ is a \rsas\ over $B$ and $\zg$ is a relative analytic subspace of $\xg$, then 
$\ore{\zg}$ is a relatively coherent \sh\ on $\xg$.
\par
-- A \rc\  \sh\ on $\xg$ can also be defined by glueing conditions: it is given by a family of relative holomorphic charts $\bigl\{\apliso{\phi _{i}}{Z_{i}\he\tim V_{i}\he}{U_{i}\he}\bigr\}_{i}\he$ with transition functions
$\xymatrix@C=17pt{\phi _{ji}:Z_{ij}\he\tim V_{ij}\he\ar[r]^-{\sim}& Z_{ji}\he\tim V_{ji}\he,}$ and a family of coherent \sv\ $\bigl\{\baf_{i}\he\bigr\}_{i}\he$ on $\{Z_{i}\he\}$ together with isomorphisms
\[
\phi _{ji}^{-1}\Bigl[\bigl(\,\baf_{j}\he\oti_{\pru\oo_{Z_{j}}}\ore{Z_{j}\he\tim V_{j}\he}\bigr)_{\vert Z_{ji}\he\tim U_{ji}\he}\he\,\Bigr]\simeq\bigl(\,\baf_{i\he}\oti_{\pru\oo_{Z_{i}}}\ore{Z_{i}\he\tim V_{i}\he}\bigr)_{\vert Z_{ij}\he\tim U_{ij}\he}\he
\]
of \sv\ of $\ore{Z_{ij}\he\tim V_{ij}\he}$-modules satisfying the usual cocycle condition.
\par
-- Let $\ff$ be a \rc\  \sh\ on $\xg$ given by a family of \sv\ $\{\baf_{i}\he\}_{i\in I}\he$. Then, for any $b\in B$, if 
$J=\{i\in I\ \textrm{such that}\ b\in V_{i}\he\}$, the \sv\ $\{\baf_{i}\he\}_{i\in J}\he$ on 
$\{Z_{i}\he\tim b\}_{i\in J}\he$ patch together into a coherent \sh\ on $\xg_{b}\he$, which we will denote by $\ff_{b}\he$.
\end{remark}
The glueing procedure allows to perform standard operations for coherent \sv\ in the relative setting.
\begin{definition}\label{DefSixAppendCh4}
 $\he$
\begin{enumerate}
 \item [(i)] Let $\ff$ and $\g$ be \rc\  \sv\ on $\xg$ given by two families 
$\{\baf_{i}\he\}$ and $\{\bag_{i}\he\}$. The \sv\ $\ext^{\,k}\be (\ff,\g)$  and 
$\tor^{k}\be(\ff,\g)$ are defined by the families 
$\bigl\{\ext^{\,k}_{\oo_{Z_{i}}}(\baf_{i}\he,\bag_{i}\he)\bigr\}$ and
$\bigl\{\tor^{k}_{\oo_{Z_{i}}}(\baf_{i}\he,\bag_{i}\he)\bigr\}.$  We put 
$\hh om(\ff,\g)=\ext^{\,0}\be(\ff,\g)$ and $\ff\oti\g=\tor^{0}\be(\ff,\g)$.
\item[(ii)] Let $\apl{f}{\bigl( \xg,\ore{\xg}\bigr)}{\bigl( \xgp,\ore{\xgp}\bigr)}$ be a weak morphism between relative smooth analytic spaces. Let 
$\{\phi _{i}\he\}_{i\in I}\he$ and $\{\phi '_{i}\}_{j\in J}\he$ be two families of charts such that:
\begin{enumerate}
\item[--] $I\suq J$ and $f(U_{i}\he)\suq U_{i}'$.
\item[--] In the charts $\phi _{i}\he$ and $\phi _{i}'$, $f$ has the form 
$\flgdba{(z,v)}{(g_{i}\he(z),u_{i}\he(v)).}$
\end{enumerate}
If $\g$ is a \rc\  \sh\ on $\xgp$ given by a family $\{\bag_{j}\he\}_{j\in J}\he$, the \sv\ 
$\tor^{k}(\g,f)$ are defined by the families
$\Bigl\{\tor^{k}_{g^{-1}_{i}\oo_{Z_{i}'}}\bigl(g_{i}^{-1}\bag_{i}\he, \oo_{Z_{i}\he}\bigr)\Bigr\}_{i\in I}\he$. We put $f\ee\be\g=\tor^{0\be}(\g,f)$.
\item[(iii)] Let $\apl{f}{\bigl( \xg,\ore{\xg}\bigr)}{\bigl( \xgp,\ore{\xgp}\bigr)}$ be a morphism of \rsas s over $B$ and $\ff$ a \rc\ \sh\ on $\xg$ such that for all $b\in B$, $f_{b}\he$ is finite on $\supp(\ff_{b}\he)$. Let $\{\phi _{i}\he,U_{i}\he\}_{i\in I}\he$ and $\{\phi '_{j},U'_{j}\}_{j\in J}\he$ be two families of charts such that 
\begin{enumerate}
\item[--] $J\suq I$ and $f^{-1}(U'_{j})\cap\supp(\ff)\suq U_{j}$.
\item[--] If $W_{j}\he$ is a \nbh\ of $f^{-1}(U'_{j})\cap\supp(\ff)$, $\apl{f}{W_{j}\he}{U_{j}\he}$ has the form $\flgdba{(z,w)}{(g_{j}(z),v)}$.
\end{enumerate}
If $\ff$ is given by a family $\{\ff_{i\he}\}_{i\in I}\he$, the \sh\ $f_{*}\he\ff$ is defined by the family $\{g_{j*}\he\ff_{j}\he\}_{j\in I}\he$.
\end{enumerate}
\end{definition}
\begin{remark}\label{RemSixAppenCh4}
$\he$\par
-- For all $b$ in $ B$,
 \begin{align*}
 \ext^{k}\be (\ff,\g)_{b}\he&=\ext^{k}_{\oo_{\xg_{b}}}(\ff_{b}\he,\g_{b}\he),&
\tor^{k}\be (\ff,\g)_{b}\he&=\tor^{k}_{\oo_{\xg_{b}}}(\ff_{b}\he,\g_{b}\he),\\
\tor^{k}\be (\g,f)_{b}\he&=\tor^{k}_{f^{-1}_{b}\oo_{\xgp_{u(b)}}}\bigl( f^{-1}_{b}\g_{u(b)}\he,\oo_{\xg_{b}}\bigr)&
\bigl( f_{*}\he\ff\bigr)_{b}\he&=\bigl( f_{b}\he\bigr)_{*}\he\ff_{b}.
\end{align*}
\par
-- The finiteness hypothesis will be verified for the computations of Sections \ref{m} and \ref{Comparaison}. Therefore, we need not construct direct images in full generality.
\par
-- It is straightforward that 
$\hh om (\ff,\g)=\hh om_{\ore{\xg}}\he(\ff,\g),$\quad $\ff\oti\g=\ff\oti_{\ore{\xg}}\he\g,$\quad $
f\ee\be\g=f^{-1}\g\oti_{f^{-1}\ore{\xgp}}\he\ore{\xg}$ and 
that $f_{*}\he\ff$ is the usual direct image of $\ff$ by $f$ via the morphism 
$\flgd{\ore{\xgp}}{f_{*}\he\ore{\xg}}$. The associated derived results are also true and are consequences of Lemma 
\ref{LemUnAppenCh4}, (i). 
Thanks to the finiteness hypothesis, there are no higher direct images $R^{i}\be f_{*}\he$.
\end{remark}
\subsection{Analytic \mbox{$K$-theory} for relatively coherent sheaves}\label{p}
We are now going to define morphisms of \rc\  sheaves. The natural idea is to consider the \rc\  \sv\ on $\bigl( \xg,\ore{\xg}\bigr)$ as a full subcategory of $\textrm{Mod}\bigl( \ore{\xg}\bigr)$. It is not appropriate because this category would be non-abelian. Before giving the definition, we start with a preliminary flatness lemma which will be essential in the sequel. 
\begin{lemma}\label{LemUnAppenCh4}
 Let $U$ be an open set of $\R^{n}\be$, $G$ a finite group acting smoothly on $U$, $V=U/G$ and let $Z$ be a smooth analytic set. Then
\begin{enumerate}
 \item [(i)] $\ore{Z\tim V}$ is flat over $\pru\oz $,
\item [(ii)] $\ci_{Z\tim V}$ is flat over $\pru\oz $.
\end{enumerate}
\end{lemma}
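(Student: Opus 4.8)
The plan is to prove both statements by reducing to a stalkwise assertion and then invoking Malgrange's flatness theorem for the local models. Flatness of a sheaf of $\pr_1^{-1}\oo_Z$-modules is checked on stalks, and the stalk of $\pr_1^{-1}\oo_Z$ at a point $(z_0,v_0)$ of $Z\tim V$ is the stalk $\oo_{Z,z_0}$, which after a holomorphic chart I identify with the ring $\oo_d=\mathbb{C}\{z_1,\ldots,z_d\}$ of convergent power series in $d=\dim_{\mathbb{C}} Z$ variables (here I use that $Z$ is \emph{smooth}). So it suffices to prove that, for each $(z_0,v_0)$, the stalks $\bigl(\ore{Z\tim V}\bigr)_{(z_0,v_0)}$ and $\bigl(\ci_{Z\tim V}\bigr)_{(z_0,v_0)}$ are flat $\oo_d$-modules. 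Since $\oo_d$ is Noetherian, flatness can be tested through the vanishing of $\tor_1^{\oo_d}(\oo_d/I,-)$ for finitely generated ideals $I$, even though the modules themselves are very far from Noetherian.

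First I would dispose of the orbifold structure. Write $p\colon U\to V=U/G$ and fix $u_0\in p^{-1}(v_0)$ with stabilizer $\Gamma=\mathrm{Stab}_G(u_0)$. Since $\ci_V=p_*(\ci_U)^G$, the two stalks above are the $\Gamma$-invariants of the corresponding stalks computed on the manifold $Z\tim U$ upstairs, the group $\Gamma$ acting only on the $U$-factor and hence trivially on $\oo_d$ (which involves only the variables $z$). As $\Gamma$ is finite and $\frac{1}{|\Gamma|}\in\mathbb{R}$, the Reynolds operator $\frac{1}{|\Gamma|}\sum_{\gamma\in\Gamma}\gamma$ is an $\oo_d$-linear idempotent whose image is exactly the invariant submodule; thus the invariants are a direct $\oo_d$-summand of the full stalk. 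A direct summand of a flat module is flat, so it suffices to establish flatness of the upstairs stalks, i.e. to treat the case where $V=U$ is an open subset of $\mathbb{R}^n$.

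In this manifold case, working in a holomorphic chart $Z\simeq\Omega\subseteq\mathbb{C}^d$, the relevant local models at the origin are: for (i) the ring $\mathcal{A}$ of germs of functions on $\mathbb{C}^d\tim\mathbb{R}^n$ that are $C^\infty$ in $(z,t)$ and holomorphic in $z$; for (ii) the ring $\mathcal{E}_{2d+n}$ of germs of $C^\infty$ functions on $\mathbb{R}^{2d+n}=\mathbb{C}^d\tim\mathbb{R}^n$. Both carry the structure of $\oo_d$-modules through the holomorphic variables $z$, and both lie between $\oo_d$ and $\mathcal{E}_{2d+n}$. I would now invoke Malgrange's flatness theorem: the ring of $C^\infty$ germs at $0\in\mathbb{R}^{2d}=\mathbb{C}^d$ is flat over $\oo_d$, and this persists in the presence of extra $C^\infty$ parameters $t$ (the parametrized form of the theorem), which yields flatness of $\mathcal{E}_{2d+n}$ over $\oo_d$, that is (ii); imposing additionally the holomorphy in $z$ gives the same conclusion for $\mathcal{A}$, that is (i). Unwinding the $\tor_1$ criterion, both statements amount to the assertion that a relation between holomorphic germs that is solvable fibrewise over each value of the parameter $t$ admits a solution depending smoothly (respectively smoothly, and holomorphically in $z$) on $t$ — which is precisely the parametrized division/flatness statement Malgrange provides.

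The only genuinely analytic input is Malgrange's theorem, and that is where the real content sits; everything else is formal bookkeeping. The main point to be careful about is that these germ rings are not Noetherian, so one cannot argue naively with finite presentations: the argument must rest on the Noetherianity of the base $\oo_d$ together with the ideal-wise $\tor$ criterion, and on the parametrized form of Malgrange's result to control the smooth dependence on the parameters coming from $V$.
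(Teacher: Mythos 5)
Your reduction of the orbifold quotient to the manifold cover is sound and matches the paper's: the paper passes to $Z\tim U$ and uses exactness of the invariants functor $\ff\mapsto\ff^{G}$, which is the same averaging mechanism as your Reynolds-operator/direct-summand argument. Your treatment of statement (ii) is also essentially right, though the paper pins down the exact form of Malgrange's theorem it needs by factoring the extension as $\pru\oz\to\pru\cio_{Z}\to\cio_{Z\tim U}\to\ci_{Z\tim U}$ (real-analytic germs in more variables are flat over real-analytic germs in fewer, and $\ci$ is flat over $\cio$ by Malgrange), rather than invoking a ``parametrized'' version wholesale.

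The genuine gap is in statement (i), which is the harder half. You dispose of it with ``imposing additionally the holomorphy in $z$ gives the same conclusion for $\mathcal{A}$,'' but flatness over $\oo_{d}$ does not pass from $\mathcal{E}_{2d+n}$ to the intermediate subring $\mathcal{A}$ of germs holomorphic in $z$: there is no $\oo_{d}$-linear projection of $\mathcal{E}_{2d+n}$ onto $\mathcal{A}$, and given a relation $\sum f_{i}g_{i}=0$ with $f_{i}\in\oo_{d}$, $g_{i}\in\mathcal{A}$, the smooth coefficients produced by the $C^{\infty}$ division theorem need not be holomorphic in $z$. The form of Malgrange's result actually cited (Th.\ 2 bis) concerns germs on a \emph{complex} manifold that are smooth in one complex factor and holomorphic in the other, over the fully holomorphic germs; it does not directly cover $\C^{d}\tim\R^{n}$. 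The missing idea is the paper's complexification-and-descent step: replace $U$ by $U\tim\R^{k}$ so that it becomes an open subset $\ti{U}$ of some $\C^{m}$, apply Th.\ 2 bis to get $\ore{Z\tim\ti{U}}$ flat over $\oo_{Z\tim\ti{U}}$, compose with the (standard) flatness of $\oo_{Z\tim\ti{U}}$ over $\pru\oz$, and then descend to $\ore{Z\tim U}$ by exhibiting $q^{-1}\ore{Z\tim U}$ as a direct $\pru\oz$-summand of $\ore{Z\tim\ti{U}}$ (inclusion by pullback along the projection $q$, retraction by restriction to a slice $\R^{k}=\mathrm{const}$). Without this step, or an explicit $\bar{\partial}$-type argument showing that smooth solutions can be corrected to ones holomorphic in $z$, your proof of (i) is an assertion rather than a derivation.
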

\begin{proof}
 (i) Let $\apl{\delta }{U}{V}$ be the projection and $\mm$ be a \sh\ of $\pru\oz $-modules. Then
\[
(\delta ,\id)^{-1}\bigl( \mm\oti_{\pru\oz }\ci_{Z\tim V}\bigr)=
(\delta ,\id)^{-1}\mm\oti_{\pru\oz  }\bigl(\ci_{Z\tim U}\bigr)^{G}\be
\simeq\bigl[(\delta ,\id)^{-1}\mm\oti_{\pru\oz  }\ci_{Z\tim U}\bigr]^{G}\be.
\]
Since the functor $\flgdba{\ff}{\ff^{G}\be}$ from\ \ $\textrm{Mod}_{G}\he\bigl( \ci_{Z\tim U}\bigr)$\ \  to\ \ $\textrm{Mod}\bigl[ \bigl( \ci_{Z\tim U}\bigr)^{G}\be\,\bigr]$ is exact, it suffices to prove that $\ci_{Z\tim U}$ is smooth over $\pru\oz $. If $Y$ is a real-analytic manifold, we will denote by $\cio_{Y}$ the \sh\ of real-analytic functions on $Y$. Then, $\cio_{Z}$ is flat over $\oz$, $\cio_{Z\tim U}$ is flat over $\pru\cio_{Z}$ and $\ci_{Z\tim U}$ is flat over $\cio_{Z\tim U}$ by \cite[Th 2]{SchHilMa}.
\par\medskip 
(ii) As in (i), it suffices to prove that $\ore{Z\tim U}$ is flat over $\pru\oz$. Let 
$k=\lfloor (n+1)/2\rfloor$. Then $U\tim \R^{k}\be$ can be seen as an open subset $\ti{U}$ in $\C^{(n+k)/2}\be$. By \cite[Th 2 bis]{SchHilMa}, $\ore{Z\tim\ti{U}}$ is flat over $\oo_{Z\tim\ti{U}}\he$ and $\oo_{Z\tim\ti{U}}\he$ is flat over $\pru\oz$. Thus $\ore{Z\tim\ti{Y}} $ is flat over $\pru\oz$. If $\apl{q}{Z\tim\ti{U}}{Z\tim U}$ is the projection, $q^{-1}\ore{Z\tim U}$ is a direct factor of $\ore{Z\tim\ti{U}}$ (as 
$\pru\oz$-modules), so that $\ore{Z\tim U}$ is flat over $\pru\oz$.
\end{proof}
This being done, the definition of a morphism of \rc\ \sv\ runs as follows:
\begin{definition}\label{DefNeufAppendCh4}
Let $\ff$ and $\g$ be \rc\ \sv\ on a \rsas\ $\bigl( \xg,\ore{\xg}\bigr)$.
A morphism $u\in\textrm{Hom}_{\ore{\xg}}\he(\ff,\g)$ will be said to be \emph{strict} if for every $x\in\xg$ and any trivialization $\apliso{\phi }{Z\tim V}{\,U_{x}\he}$ in a \nbh\ of $x$, there exist two
isomorphisms $\phi ^{-1}\ff_{\vert U_{x}\he}\simeq \pru\baf\oti_{\pru\oz }\ore{Z\tim V}$ and $\phi ^{-1}\g_{\vert U_{x}\he}\simeq \pru\bag\oti_{\pru\oz }\ore{Z\tim V}$, where $\baf$ and $\bag$ are coherent on $Z$, and $v$ in $ \textrm{Hom}_{\oz}\he(\baf,\bag)$,
such that the following diagram commutes:
\[
\xymatrix@C=60pt{
\phi ^{-1}\ff\ar[r]^-{\phi ^{-1}u}\ar[d]^-{\sim}&\phi ^{-1}\g\ar[d]^-{\sim}\\
\pru\baf\oti_{\pru\oz}\he\ore{Z\tim V}\ar[r]^-{v\oti\id}&\pru\bag\oti_{\pru\oz}\he\ore{Z\tim V}
}
\]
\end{definition}
\begin{remark}\label{RemNeufAppendCh4}
Let $\bigl( \xg,\ore{\xg}\bigr)$ be a \rsas. Lemma \ref{LemUnAppenCh4} (i) implies that the category $\coh ^{\rel}(\xg)$ of \rc\ \sv\ on $\xg$ with strict morphisms is an abelian subcategory of $\textrm{Mod}\bigl( \ore{\xg}\bigr)$. 
\begin{definition}\label{DefUnBisAppendCh4}
Let $\bigl( \xg,\ore{\xg}\bigr)$ be a \rsas.
\begin{enumerate}
\item[(i)] We define the \emph{relative analytic $K$-theory of $\xg$} by 
\[
\kre{}(\xg)=\lim_{\genfrac{}{}{0pt}{2}{\longleftarrow}{\xgp}}K\bigl( \textrm{Coh}^{\re}\be(\xgp)\bigr),
\]
where $\xgp$ runs through relatively compact open analytic subsets in $\xg$. 
\item[(ii)] In the same way, if $\zg$ is a relative sub-analytic space of $\xg$, the relative analytic $K$-theory of $\xg$ with suppoort in $\zg$ is defined as
\[
\kre{\zg}(\xg)=\lim_{\genfrac{}{}{0pt}{2}{\longleftarrow}{\xgp}}K\bigl( \textrm{Coh}^{\re}_{\zg\cap\xgp}(\xgp)\bigr),
\]
where $\textrm{Coh}^{\re}_{\zg\cap\xgp}(\xgp)$ is the abelian category of relatively coherent sheaves on $\xgp$ supported in $\zg$ and $\xgp$ runs through relatively compact open analytic subsets in $\xg$.
\end{enumerate}

\end{definition}

\end{remark}

As for coherent \sv, we can define usual operations on relative analytic $K$-theory. 
Here is a list of these operations:
\begin{enumerate}
 \item [(i)] \emph{The product.} A product from $\kre{}(\xg)\oti_{\Z}\he\kre{}(\xg)$\ \  to\ \ $\kre{}(\xg)$ is defined by 
\[
\ff\, .\, \g=\sum_{k\ge 0}(-1)^{k}\tor^{k}(\ff,\g).
\]
Similarly, if $\zg$ is a relative analytic subspace of $\xg$, a product with support from $\kre{}(\xg)\oti_{\Z}\he\kre{\zg}(\xg)$\ \  to\ \ $\kre{\zg}(\xg)$ is given by the same formula.
\item[(ii)] \emph{The dual morphism.} It is an involution of $\kre{}(\xg)$ given by \[
\ff^{\vee}\be=\sum_{k\ge 0}(-1)^{k}\ext^{k}\bigl( \ff,\ore{\xg}\bigr).
\]
\item[(iii)] \emph{The pull-back.} If 
$\apl{f}{\bigl( \xg,\ore{\xg}\bigr)}{\bigl( \xgp,\ore{\xgp}\bigr)}$ is a weak morphism, the pull-back map $\apl{f\pe\be}{\kre{}(\xgp)}{\kre{}(\xg)}$ is defined by 
\[
f\pe\g=\sum_{k\ge 0}(-1)^{k}\be\tor^{k}(\g,f).
\]
If $\zgp$ is a relative analytic subspace of $\xgp$, we also have a pull-back morphism with supports $\apl{f\pe\be}{\kre{\zgp}(\xgp)}{\kre{f^{-1}(\zgp)}(\xgp)}$ given by the same formula.
\item[(iv)] \emph{The Gysin map.} 
If $\apl{f}{\bigl( \xg,\ore{\xg}\bigr)}{\bigl( \xgp,\ore{\xgp}\bigr)}$ is a morphism and $\zg$ is a relative analytic subspace of $\xg$ such that for every $b$ in $B$, $f_{b}\he{}_{\vert \zg_{b}\he}\he $ is finite, the Gysin morphism 
$\apl{f_{*}\he}{\kre{\zg}(\xg)}{\kre{}(\xgp)}$ is induced by the exact functor
$\apl{f_{*}\he}{\textrm{Coh}^{\re}_{\zg}(\xg)}{\textrm{Coh}(\xgp).}$
\end{enumerate}
\par 
We now list all the properties we need relating the operations introduced above.
\begin{proposition}\label{PropUnBisAppendCh4}
 $\he$
\begin{enumerate}
 \item [(i)] If $\bigl( \xg,\ore{\xg}\bigr)$ is a \rsas, $\kre{}(\xg)$ is a unitary ring. Furthermore, if $\zg$ is a relative analytic subspace of $\xg$, $\kre{\zg}(\xg)$ is a module over $\kre{}(\xg)$.
\item[(ii)] The pull-back morphism in relative $K$-theory is contravariant and the Gysin map is covariant.
\item[(iii)] The projection formula holds. More precisely, if $\apl{f}{ \xg}{ \xgp}$ is a morphism,
$\zg$ a relative analytic subspace of $\xg$ such that 
$f_{\vert \zg}\he$ is finite, $\ff$ a \rc\ \sh\ on $\xg$ supported in $\zg$ and $\g$ a \rc\ \sh\  on $\xgp$, then $f_{*}\he\bigl( \ff\,.\,f\pe\be\g\bigr)=f_{*}\he\ff\,.\,\g$.
\item[(iv)] Let $\xg$ be a relative smooth analytic space over $B$, $\xgp$ and $\Delta $ be two \rsas s over $B'$ and $\apl{f}{ \xg}{ \xgp}$ a weak morphism; $f$ induces a weak morphism
$\apl{f_{\Delta }\he}{\xg\tim_{B}\he\bigl( \Delta\tim _{B'}\he B\bigr)}{\xgp\tim_{B}\he\Delta.}$
Let $\zgp$ be a relative analytic subspace of $\xgp\tim_{B}\he\Delta $ such that the projection $\apl{q}{\xgp\tim_{B}\Delta }{\xgp}$ is finite on $\zgp$, and let $\zg=f^{-1}_{\Delta }(\zgp)$. We consider the diagram
\[
\xymatrix@C=40pt{
\xg\tim_{B}\he\bigl( \Delta\tim _{B'}\he B\bigr) \ar[r]^-{f_{\Delta }\he}\ar[d]_-{p}&\xgp\tim_{B}\he\Delta \ar[d]^-{q}\\
\xg\ar[r]_-{f}&\xgp
}
\]
as well as the following pull-back and push-forward operations:
\begin{align*}
 \hspace*{12mm}\apl{f\pe\be&}{\kre{}(\xgp)}{\kre{}(\xg)}&\apl{\!\!f_{\Delta }\pe&}{\kre{\zgp}(\xgp\tim_{B}\he\Delta )}{\kre{\zg}\bigl[ \xg\!\tim_{B}\he\bigl( \Delta\tim _{B'}\he B\bigr)\bigr]}\\
\hspace*{12mm}
\apl{p_{*}\he&}{\kre{\zg}\bigl[ \xg\tim_{B}\he\bigl( \Delta\tim _{B'}\he B\bigr)\bigr] }{\kre{}(\xg)}&\apl{q_{*}\he&}{\kre{\zgp}(\xgp\tim_{B}\he\Delta )}{\kre{}(\xgp)}
\end{align*}
Then, for every $\g$ in $\coh_{\zgp}\he(\xgp\tim_{B}\he\Delta )$
we have 
$
f\pe\be q_{*}\he\g=p_{*}f\pe_{\Delta }\g$ in $ \kre{}(\xg)$.
\item[(v)] Let $\xg$, $\Delta $ be two relative smooth analytic spaces over $B$, $\xgp$ a relative smooth analytic subspace of $\Delta $ and $\apl{f}{\xg}{\xgp}$ a morphism. Consider the cartesian diagram
\[
\xymatrix@C=40pt{
\xg \ar[r]^-{(\id,i\circ f)}\ar[d]_-{f}&\xg\tim_{B}\he\Delta \ar[d]^-{f_{\Delta }\he}\\
\xgp\ar[r]_-{(\id,i)}&\xgp\tim_{B}\he\Delta
}
\]
as well as the following pull-back and push-forward operations:
\begin{align*}
 \hspace{2mm}\apl{&f\pe\be}{\kre{}(\xgp)}{\kre{}(\xg)}&&\apl{f_{\Delta }\pe}{\kre{\xgp}(\xgp\tim_{B}\he\Delta )}{\kre{\xg}(\xg\tim_{B}\he\Delta )}\\
\hspace{2mm}\apl{&(\id,i)_{*}\he}{\kre{}(\xgp)}{\kre{\xgp}(\xgp\tim _{B}\he\Delta )}&&\apl{(\id,i\circ f)_{*}\he}{\kre{}(\xg)}{\kre{\xg}(\xg\tim_{B}\he\Delta )}
\end{align*} 
Then for every \rc\ \sh\ $\g$ on $\xgp$, we have $f_{\Delta }\pe(\id,i)_{*}\he\g=(\id,i\circ f)_{*}\he f\pe\be\g$ in $\kre{\xg}(\xg\tim_{B}\he\Delta )$.
%
\end{enumerate}

\end{proposition}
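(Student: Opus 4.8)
The plan is to prove all five statements by reducing them, chart by chart, to the corresponding well-known facts for coherent analytic sheaves on a smooth analytic set $Z$. Every operation entering the proposition is defined on a relative holomorphic trivialization $Z\tim V$ by applying the analogous operation to the coherent sheaves $\baf$ on $Z$ appearing in the local model $\pru\baf\oti_{\pru\oz}\ore{Z\tim V}$, and Lemma \ref{LemUnAppenCh4} (i) guarantees that these local operations glue and descend to $\kre{}(\xg)$. Two features of the smooth fibres $Z$ are used throughout: coherent sheaves on $Z$ have locally finite $\tor$- and $\ext$-dimension (bounded by $\dim Z$), so the alternating sums defining the product and the pull-back are finite; and $K\bigl(\coh(Z)\bigr)$ is a commutative unitary ring, since on a smooth space coherent sheaves admit finite locally free resolutions.

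First I would treat (i)--(iii). For (i), flatness of $\ore{Z\tim V}$ over $\pru\oz$ identifies the relative $\tor$-sheaves with $\pru$ of the analytic $\tor$-sheaves on $Z$, so the product on $\kre{}(\xg)$ is computed fibrewise by the derived tensor product on $Z$; associativity, commutativity and the unit $[\ore{\xg}]$ are inherited from $K\bigl(\coh(Z)\bigr)$, and the same computation exhibits $\kre{\zg}(\xg)$ as a module over $\kre{}(\xg)$. Statement (ii) is the functoriality of $f\pe$ and of the Gysin map $f_*$; in suitable charts these act on the local model by the derived pull-back along $g$ and by $g_*$ on the coherent level, so contravariance and covariance follow from the corresponding properties on $Z$ (the finiteness hypothesis ensures, as in Remark \ref{RemSixAppenCh4}, that there are no higher direct images). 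For (iii) I would localize so that $f$ reads $(z,v)\mapsto(g(z),v)$ with $g$ finite, invoke the classical projection formula $g_*\bigl(\alpha\,.\,g\ee\beta\bigr)=g_*\alpha\,.\,\beta$ for finite morphisms of analytic spaces, and glue.

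The substantial content lies in (iv) and (v), which are base-change identities. For (iv) I would localize the cartesian square so that, in relative charts, $f$ reads $(z,v)\mapsto(g(z),u(v))$ and the square becomes, on each slice, a cartesian square of analytic spaces in which $q$ is finite; the identity $f\pe q_*\g=p_*f_\Delta\pe\g$ then reduces to the analytic base-change formula, valid in $K$-theory because the square is $\tor$-independent in the fibre direction and, by Lemma \ref{LemUnAppenCh4}, the passage between relative $\tor$ and fibrewise $\tor$ is exact. The finiteness of $q$ kills the higher direct images, so both sides are genuine finite alternating sums of $\tor$-sheaves. Statement (v) is handled in the same spirit: the square is cartesian with $(\id,i)$ a relative closed immersion, and $f_\Delta\pe(\id,i)_*\g=(\id,i\circ f)_*f\pe\g$ reduces to the base-change compatibility of Gysin maps for closed immersions of analytic spaces.

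The main obstacle I anticipate is (iv): one must check that the cartesian square of relative smooth analytic spaces restricts, fibre by fibre, to a $\tor$-independent cartesian square of complex-analytic spaces, and that the relative $\tor$-corrections match the fibrewise ones. This is exactly where Lemma \ref{LemUnAppenCh4} is indispensable, as it lets one replace the relative structure sheaves by $\pru$ of analytic sheaves without introducing spurious $\tor$-terms, thereby turning the relative base-change statement into its purely analytic counterpart.
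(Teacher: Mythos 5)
Your proposal is correct and follows essentially the same route as the paper: localize in relative holomorphic charts, use the flatness Lemma \ref{LemUnAppenCh4} (i) to identify the relative $\tor$/$\ext$ sheaves with the fibrewise analytic ones, and reduce each identity (unit and associativity of the product, functoriality, projection formula, and the two base-change formulas, with finiteness killing higher direct images) to its classical counterpart on the smooth fibres. The only presentational difference is that the paper makes the $K$-theoretic bookkeeping explicit — via spectral sequences whose $E_{2}$-terms are relatively coherent with strict differentials for (i)--(ii), and, for (iv), by first proving the underived local base-change identity and then resolving $\ba{\g}$ — whereas you summarize this as inheritance from $K\bigl(\coh(Z)\bigr)$ and $\tor$-independence of the local square, which is the same underlying mechanism.
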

\begin{proof}
 (i) If $\ff$, $\g$ and $\hh$ are relatively coherent sheaves on $\xg$, for each $\xgp$ open and relatively compact in $\xg$, we have a spectral sequence such that 
\[
\begin{cases}
 E^{\,p,\,q}_{2}&\!\!\!\!=\tor^{\,p}_{\ore{\xgp}}\bigl( \tor^{q}_{\ore{\xgp}}(\ff,\g),\hh\bigr)\\
E^{\,p,\,q}_{\infty }&\!\!\!\!=\gr_{p}\,\he\tor^{\,p+q}_{\ore{\xgp}}(\ff,\g,\hh).
\end{cases}
\]
and $E^{\,p,\,q}_{2}=0$ except for finitely many couples $(p,q)$. Furthermore, by Lemma \ref{LemUnAppenCh4} (i), the \sv\ $E_{2}^{\,p,\,q}$ are relatively coherent on $\xgp$ and the morphisms $d_{2}^{\,p,\,q}$ are strict. Therefore, for all $r\ge 2$, the \sv\ $E_{r}^{\,p,\,q}$ are relatively coherent and the morphisms $d_{2}^{\,p,\,q}$ are strict so that 
\[
\sum_{p,\,q\ge 0}(-1)^{p+q}E^{\,p,\,q}_{2}=\sum_{n\ge 0}(-1)^{n}\,\tor^{\,n}_{\ore{\xgp}}(\ff,\g,\hh)
\] in $\kre{}(\xgp)$. This gives the result. 
\par\medskip 
(ii) The proof is similar to the proof of (i), using spectral sequences associated to the composition of two functors.
\par\bigskip 
\noindent The proofs of (iii), (iv) and (v) are done in the same way. We will prove (iv).
\par\medskip 
(iv)  Let $x\in\xg$. We take charts $U_{x}\he\simeq Z\tim V$ and $U_{f(x)}\he\simeq Z'\tim V'$ such that $\apl{f}{U_{x}\he}{U_{f(x)}}$ has the form $\flgdba{(z,v)}{(g(z),u(v))}$, where $\apl{g}{Z}{Z'}$ is holomorphic and $\apl{u}{V}{V'}$ is smooth. Let $\delta _{_{1}}\he,\dots,\delta _{_{N}}\he\in \Delta $ such that 
$q^{-1}(f(x))\cap\zg=\cup_{i=1}^{N}\bigl( f(x),\delta _{_{i}}\he\bigr)$. We take a chart 
$U_{\delta _{_{1}},\dots,\,\delta _{_{N}}}\he\simeq Y\tim V'$ in a \nbh\ of the $\delta _{_{i}}\he$'s. Then the diagram looks locally on $\xgp$ as follows:
\[
\xymatrix@C=40pt@R=30pt{
Z\tim Y\tim V\ar[r]^-{(g,\,\id,\,u)}\ar[d]_-{\pr_{13}\he}&Z'\tim Y\tim V'\ar[d]^(.55){\pr_{13}}\\
Z\tim V\ar[r]_-{(g,\,u)}&Z'\tim V'}
\]
Furthermore, $\zg=\ba{\zg}\tim V$ where $\ba{\zg}$ is an analytic subset of $Z\tim Y$ and $\pr_{_{1}}\he{}_{\!\!\vert\ba{\zg}}\he$ is finite. Then for any analytic \sh\ $\ba{\g}$ on $Z'\tim Y$, we have
\[
(g,u)\ee\be\pr_{_{13}*}\he\bigl( \pr_{_{12}}^{-1}\ba{\g}\oti_{\pr_{12}^{-1}\oo_{Z'\tim Y}\he}\he\!\!\ore{Z'\tim Y\tim V'}\bigr)\simeq
\pr_{_{13}*}\he\,(g,\id,u)\ee\be\bigl( \pr_{_{12}}^{-1}\ba{\g}\oti_{pr_{12}^{-1}\oo_{Z'\tim Y}\he}\he\!\!\ore{Z'\tim Y\tim V'}\bigr).
\]
Taking the derivative with respect to $\ba{\g}$ and using Lemma \ref{LemUnAppenCh4} (i), we obtain the result.
\end{proof}
\subsection{Topological classes}
In Sections \ref{q} and \ref{p}, we have constructed a theory for relative coherent sheaves as well as associated operations. It remains to obtain cohomological informations about these objects. To do so, we will construct global smooth resolutions for relatively coherent \sv. We start with a general result about smooth resolutions.
\begin{proposition}\label{NouvProp}
Let $M$ be a smooth manifold, $G$ a finite group of $\emph{Diff}(M)$ and $Y=M/_{\ds G}$. Let $\hh$ be a sheaf of $\ci_{Y}$-modules which admits a finite free resolution in a \nbh\ of any point $y\in Y$. Then 
\begin{enumerate}
 \item [(i)] $\hh$ admits a finite locally free resolution in a \nbh\ of any compact set of $Y$.
\item [(ii)] Two resolutions of $\hh$ in a \nbh\ of a compact set are sub-resolutions of a third one.
\end{enumerate}
\end{proposition}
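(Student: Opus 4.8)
The plan is to reduce everything to standard homological algebra over the sheaf of rings $\ci_{Y}$, the only input specific to the smooth category being that $\ci_{Y}$ is a \emph{fine} sheaf. Since $G$ is finite, averaging an ordinary smooth partition of unity on $M$ produces $G$-invariant partitions of unity subordinate to any $G$-invariant open cover, and these descend to partitions of unity for $\ci_{Y}=(p_{*}\ci_{M})^{G}$. Consequently every locally free $\ci_{Y}$-module behaves as a projective object: local sections of $\hh om(E,\g)$ patch, so a morphism out of a locally free sheaf into a quotient sheaf that lifts locally lifts globally. I would record this at the outset, as it is the tool replacing the partition-of-unity argument unavailable in the holomorphic category.

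For (i), I would induct on the length $d$ of the local free resolutions. Fix a compact $K$; by compactness there is a uniform bound $d$ on that length over a \nbh\ of $K$. First I would build a global surjection $\ci_{Y}^{N}\to\hh$ on a \nbh\ $\Omega$ of $K$: pick finitely many charts $U_{i}$ covering $K$ on which $\hh$ is generated by sections $s_{i,1},\dots,s_{i,n_{i}}$, an invariant subordinate partition $\{\rho_{i}\}$, and take the map given by $\{\rho_{i}s_{i,j}\}$; wherever $\rho_{i_{0}}\ne 0$ the germ $\rho_{i_{0}}$ is a unit in the stalk, so these sections already generate $\hh$, whence surjectivity on $\Omega$. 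Let $\kk_{0}$ be the kernel. Since the stalks of $\ci_{Y}$ are local rings, Schanuel's lemma compares $\kk_{0}$ stalkwise with the first syzygy of the local resolution and shows that $\kk_{0}$ is finitely generated with projective dimension $\le d-1$ and again admits a finite free resolution locally. Iterating $d$ times yields locally free $E_{0},\dots,E_{d-1}$ whose $d$-th kernel has vanishing projective dimension at every stalk; as a finitely generated module of projective dimension $0$ over a local ring is free, and a finitely generated sheaf with free stalks admitting finite local resolutions is locally free, this last kernel $E_{d}$ is locally free, giving the finite locally free resolution $0\to E_{d}\to\cdots\to E_{0}\to\hh\to 0$ over $\Omega$.

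For (ii), the smooth input again enters only through fineness, the remainder being the uniqueness of resolutions up to trivial complexes. Given two finite locally free resolutions $E_{\bullet}\to\hh$ and $E'_{\bullet}\to\hh$ near a compact set, the comparison theorem — valid because the $E_{i}$, $E'_{i}$ are locally free hence projective, and the lifts patch by partitions of unity — furnishes chain maps $f\colon E_{\bullet}\to E'_{\bullet}$ and $g\colon E'_{\bullet}\to E_{\bullet}$ over $\id_{\hh}$ that are mutually homotopy-inverse. I would then take $E''_{\bullet}$ to be the mapping cylinder of $f$: it is a bounded complex of locally free sheaves resolving $\hh$ that contains $E_{\bullet}$ as a direct-summand subcomplex with contractible locally free complement $\operatorname{Cone}(f)$; since $f$ is a homotopy equivalence, $E''_{\bullet}$ is simultaneously $E'_{\bullet}$ plus a contractible locally free complex, so $E'_{\bullet}$ too embeds as a sub-resolution. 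Thus $E''_{\bullet}$ is the desired common resolution.

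The hard part will not be the formal patching but the local-algebra bookkeeping: the stalks of $\ci_{Y}$ are local yet far from Noetherian, so finite generation of the successive syzygies cannot be assumed. The hypothesis that $\hh$ has a finite free resolution locally is exactly what rescues the argument — through Schanuel's lemma it makes each kernel $\kk_{j}$ stalkwise a direct summand of a module with a finite free resolution, hence itself finitely generated and finitely resolved, which is what keeps both the induction and the partition-of-unity surjections available at every stage. Verifying that one never leaves the class of finitely generated, finitely resolved modules despite the lack of coherence of $\ci_{Y}$ is where the real care is needed; the flatness statements of Lemma \ref{LemUnAppenCh4} play the analogous role when this result is transported to the relatively coherent setting.
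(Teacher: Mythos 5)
Your argument is correct, and the two halves sit differently relative to the paper. Part (i) is essentially the paper's proof: a $G$-invariant partition of unity produces the global surjection $\ci_{Y}^{N}\to\hh$ near $K$, and one then shows the kernel again satisfies the hypothesis with the length dropped by one. The only caution is that the paper deliberately does the kernel step at the level of sheaves on each chart $U_{i}$ --- the exact sequence $\sutrgd{\nn_{i}\he}{\nn_{\vert U_{i}\he}}{\bigoplus_{j\neq i}\ci_{Y}^{n_{j1}}}$ splits by Lemma \ref{Utile}, so $\nn_{\vert U_{i}\he}\simeq\nn_{i}\he\oplus(\textrm{free})$ visibly inherits a free resolution of length $N-1$ from the given one of $\nn_{i}\he$ --- whereas you run Schanuel's lemma stalkwise; since the stalks of $\ci_{Y}$ are far from Noetherian and stalkwise freeness does not by itself give local freeness of a sheaf, you should run the same Schanuel argument at the sheaf level on $U_{i}$ (where the fineness splitting makes it valid verbatim), or supplement the stalk argument with finite presentation of the syzygies so that stalkwise splittings propagate to neighbourhoods. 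Part (ii) is where you genuinely diverge: the paper builds the common resolution $G_{\bullet}\he$ by hand, degree by degree, using Lemma \ref{Utile} to split $N''_{k}\simeq N_{k}\he\oplus\ti{Q}_{k}\he\simeq N'_{k}\oplus\ti{R}_{k}\he$ and setting $G_{k+1}\he=\bigl(E_{k+1}\he\oplus\ti{Q}_{k}\he\bigr)\oplus\bigl(F_{k+1}\he\oplus\ti{R}_{k}\he\bigr)$, while you invoke the comparison theorem (legitimate here because locally free $\ci_{Y}$-modules are relatively projective, again by partitions of unity) and take the mapping cylinder of the resulting quasi-isomorphism $f$. Your route is shorter and makes the homological content transparent: both quotients $\mathrm{Cyl}(f)/E_{\bullet}\he\simeq\mathrm{Cone}(f)$ and $\mathrm{Cyl}(f)/E'_{\bullet}$ are acyclic bounded complexes of locally free sheaves with degreewise split inclusions, which is exactly what the $K$-theoretic application (Proposition \ref{SansLab} and the proof of Lemma \ref{e}) requires of a ``sub-resolution''; the paper's construction is more elementary, avoiding chain homotopies altogether, but costs more bookkeeping. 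Either version is acceptable.
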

\begin{proof}
We will use several times the following lemma:
\begin{lemma}\label{Utile}
 Let 
$Y=M/G$ and let $\sutrgd{\ff}{\g}{\hh}$ be an exact sequence of $\ci_{Y}$-modules on an open set $U\suq Y$ such that $\hh$ is locally free. Then this exact sequence globally splits on $U$.
\end{lemma}
\begin{proof}
 It is sufficient to prove that $\textrm{Ext}^{1}_{\ci_{Y}}(U,\hh,\ff)=0$. The $\eee xt\Longrightarrow \textrm{Ext}$ spectral sequence satisfies
\[
\begin{cases}
 E_{2}^{p,q}=H^{p}\be\Bigl[ U,\eee xt^{q}_{\ci_{Y}}\bigl( \hh,\ff\bigr)\Bigr]\\
E_{\infty }^{p,q}=\textrm{Gr}^{p}\be\, \, \textrm{Ext}^{p+q}_{\ci_{Y}}(U,\hh,\ff).
\end{cases}
\]
Since $\hh$ is locally free, $\eee xt^{q}(\hh,\ff)=0$ for $q>0$ and since $\hh om_{\ci_{Y}}\he(\hh,\ff) $ is a fine sheaf, $E^{p,0}_{2}=0$ for $p\ge 1$. Therefore all the terms $E_{2}^{p,q}$ vanish except $E_{2}^{0,0}$. This implies $\emph{Ext}^{\, 1}_{\ci_{Y}}(U,\hh,\ff)=0$. 
\end{proof}
(i) Let $K\suq Y$ be a compact. We choose a finite covering $\bigl( U_{i}\he\bigr)_{1\le i\le d}\he$ of $K$ and open sets $\bigl( V_{i}\he\bigr)_{1\le i\le d}\he$ such that $\ba{U\he}_{i}\he\suq V_{i}\he$ and $\hh_{\, \vert V_{i}\he} $ admits a finite free resolution. Using smooth cut-off functions, we obtain for each $i$ a complex of sheaves
\[
\xymatrix{0\ar[r]&\bigl( \ci_{Y}\bigr)^{n_{iN}\he}\be\ar[r]&\cdots\cdots\ar[r]&\bigl( \ci_{Y}\bigr)^{n_{i1}\he}\be\ar[r]^(.6){\pi _{i}\he}&\hh\ar[r]&0}
\]
which is exact in $U_{i}\he$. If $E=\bop_{i=1}^{d}\bigl( \ci_{Y}\bigr)^{n_{i1}\he}\be$ and 
$\apl{\pi :=\bop_{i=1}^{d}\pi _{i}\he}{E}{\hh}$, the morphism $\pi $ is surjective. Let $\nn_{i}\he=\ker \pi _{i}\he$ and $\nn=\ker \pi $. We have an exact sequence: 
\[
\xymatrix{0\ar[r]&\nn_{i}\he\ar[r]&\nn_{\vert\,  U_{i}\he}\ar[r]&\bop_{j\not=i}\bigl( \ci_{Y}\bigr)^{n_{j1}\he}\be\ar[r]&0.}
\] 
Thus $\nn_{\vert\,  U_{i}\he}$ is locally isomorphic to $ \nn_{i}\he\oplus\bigl( \ci_{Y}\bigr)^{\sum_{j\not = i}n_{j1}\he}\be$. Furthermore $\nn_{i}\he$ admits a finite free resolution of length $N-1$. Thus $\nn$ admits a finite free resolution of length at most $N-1$ in a \nbh\ of every point in $K$ and we can start the argument again. After at most $N$ steps, the kernel will be locally free.
\par\medskip 
(ii) Let $\bigl( E_{i}\he\bigr)_{1\le i\le N}\he$ and $\bigl( F_{i}\he\bigr)_{1\le i\le N}\he$ be two finite locally free resolutions in a \nbh\ of $K$. Suppose that we have constructed $\bigl( G_{i}\he\bigr)_{1\le i\le k}\he$ and injections
$\xymatrix{
E_{_{\bullet}}\he\ar@{^{(}->}[r]& G_{_{\bullet}}\he
}$ and $\xymatrix{
F_{_{\bullet}}\ar@{^{(}->}[r]& G_{_{\bullet}}\he
}$
Let $Q_{k}\he=G_{k}\he/E_{k}\he$ and $R_{k}\he=G_{k}\he/F_{k}\he$.
The sheaves
$Q_{1}\he,\dots,Q_{k}\he,R_{1}\he,\dots,R_{k}\he$ are locally free. Let $N_{k}\he=\ker\bigl(\!\! \xymatrix@C=12pt{E_{k}\he\ar[r]&E_{k-1}\he}\!\!\bigr)$, $N'_{k}=\ker\bigl(\!\! \xymatrix@C=12pt{F_{k}\he\ar[r]&F_{k-1}\he}\!\!\bigr)$,
$N''_{k}=\ker\bigl(\!\! \xymatrix@C=12pt{G_{k}\he\ar[r]&G_{k-1}\he}\!\!\bigr)$,
$\ti{Q}_{k}\he=\ker\bigl(\!\! \xymatrix@C=12pt{Q_{k}\he\ar[r]&Q_{k-1}\he}\!\!\bigr)$ and
$\ti{R}_{k}\he=\ker\bigl(\!\! \xymatrix@C=12pt{R_{k}\he\ar[r]&R_{k-1}\he}\!\!\bigr)$. 
Then
$\ti{Q}_{k}\he$ and $\ti{R}_{k}\he$ are locally free. We have two exact sequences
$\sutrgd{N_{k}\he}{N''_{k}}{\ti{Q}_{k}\he}$ and $\sutrgdpt{N'_{k}}{N''_{k}}{\ti{R}_{k}\he}{.}$ By Lemma \ref{Utile}, $N''_{k}\simeq N_{k}\he\oplus\ti{Q}_{k}\he\simeq N'_{k}\oplus \ti{R}_{k}\he$, and we define $G_{k+1}\he=\bigl( E_{k+1}\he\oplus\ti{Q}_{k}\he\bigr)\oplus\bigl( F_{k+1}\he\oplus\ti{R}_{k}\he\bigr)$. 
We put $G_{N+1}\he=N''_{N}$ to end the resolution $G_{_{\bullet}}\he$.
\end{proof}
We apply now this result in our context.
Let $\ff$ be a relatively coherent \sh\ on a \rsas\ $\xg$ over $B$, where $B=M/G$.
Then $\xg=\bigl( \xg\tim_{B}\he M\bigr)/G$ and $\xg\tim_{B}\he M$ is smooth. Furthermore, by Lemma \ref{LemUnAppenCh4} (ii), the \sh\ $\ff^{\infty }\be:=\ff\oti_{\ore{\xg}}\he\ci_{\xg}$ locally admits finite \mbox{$\ci_{\xg}$-free} resolutions. By Proposition \ref{NouvProp} (i), $\ff^{\infty }\be$ admits a globally locally \mbox{$\ci_{\xg}$-free} resolution $E_{_{\bullet}}\he$ on any relatively compact open analytic subset $\xgp$ of $\xg$. Besides, Proposition \ref{NouvProp} (ii) implies that the element $\sum_{i=1}^{N}(-1)^{i-1}\bigl[ E_{i}\he\bigr]$ in $K(\xgp)$ is independent of the chosen resolution $E_{_{\bullet}}\he$.
\par\medskip 
In conclusion, we can associate to each relatively coherent sheaf $\ff$ on $\xg$ a topological class $\bigl[ \ff^{\infty }\be\bigr]$ in 
$\lim\limits_{\genfrac{}{}{0pt}{2}{\longleftarrow}{\xgp}}K(\xgp)$, where $\xgp$ runs through all open relatively compact analytic subspaces of $\xg$.
We now state properties of this topological class:
\begin{proposition}\label{Compl}
Let $\xg$ be a \rsas\ over $B$.
\begin{enumerate}
 \item [(i)] The  topological class map from $\coh^{\re}\be(\xg)$ to $\lim\limits_{\genfrac{}{}{0pt}{2}{\longleftarrow}{\xgp}}K(\xgp)$ factors through $\kre{}(\xg)$.
\item[(ii)] Let $\ff$ be a relatively coherent \sh\ on $\xg\tim_{B}\he (B\tim[0,1])$ and for all $t\in[0,1]$, let $\apl{i_{t}\he}{\xg\tim_{B}\he (B\tim\{t\})}{\xg\tim_{B\tim\{t\}}\he (B\tim[0,1])}$ be the associated base change morphism. Then the class $\bigl[ i_{t}\ee\ff^{\infty }\be\bigr]$ in $\lim\limits_{\genfrac{}{}{0pt}{2}{\longleftarrow}{\xgp}}K(\xgp)$ is independent of $t$.
\end{enumerate}
\end{proposition}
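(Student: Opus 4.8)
The plan is to handle the two assertions by rather different means: point~(i) is an additivity statement, whereas point~(ii) reduces to the homotopy invariance of topological $K$-theory. Recall from Definition~\ref{DefUnBisAppendCh4} that $\kre{}(\xg)$ is the projective limit of the Grothendieck groups of the abelian categories $\coh^{\re}(\xgp)$. Since the topological class map takes values in the abelian group $\lim_{\xgp}K(\xgp)$, and a projective limit of abelian groups is computed levelwise, the map will factor through $\kre{}(\xg)$ as soon as I show that for every strict short exact sequence $0\to\ff'\to\ff\to\ff''\to0$ of relatively coherent sheaves and every relatively compact open analytic $\xgp$ one has $[\ff^{\infty}]=[(\ff')^{\infty}]+[(\ff'')^{\infty}]$ in $K(\xgp)$.

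For point~(i), the first step is to smoothen the sequence. In a relative holomorphic chart $Z\tim V$ the strict sequence is obtained, after applying $\pru(-)\oti_{\pru\oz}\ore{Z\tim V}$, from a short exact sequence of coherent sheaves on $Z$, and the smoothening $\ff^{\infty}=\ff\oti_{\ore{\xg}}\ci_{\xg}$ is then locally $\pru\baf\oti_{\pru\oz}\ci_{Z\tim V}$. Invoking the flatness of $\ci_{Z\tim V}$ over $\pru\oz$ from Lemma~\ref{LemUnAppenCh4}~(ii), I deduce that $0\to(\ff')^{\infty}\to\ff^{\infty}\to(\ff'')^{\infty}\to0$ is again exact, now as a sequence of $\ci_{\xg}$-modules. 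The second step is to compare smooth locally free resolutions: on a relatively compact $\xgp$ I choose finite locally free resolutions $E'_{\bu}$ and $E''_{\bu}$ of $(\ff')^{\infty}$ and $(\ff'')^{\infty}$ (Proposition~\ref{NouvProp}~(i)) and assemble them, by a horseshoe argument, into a resolution $E_{\bu}$ of $\ff^{\infty}$ with $E_{i}=E'_{i}\oplus E''_{i}$ fitting in a short exact sequence of complexes $0\to E'_{\bu}\to E_{\bu}\to E''_{\bu}\to0$ that is split in each degree. The lifts required at each stage of the horseshoe construction exist because any surjection of $\ci_{\xgp}$-modules onto a locally free sheaf splits and $\textrm{Ext}^{1}$ out of a locally free sheaf vanishes globally, which is precisely the content of Lemma~\ref{Utile} and of the spectral sequence argument in its proof. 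Since the topological class is the alternating sum associated to a resolution (Proposition~\ref{SansLab}), the identity $E_{i}=E'_{i}\oplus E''_{i}$ yields the desired additivity.

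For point~(ii), I would first note that, by Proposition~\ref{SansLab} and Proposition~\ref{NouvProp}, the class $[\ff^{\infty}]$ is represented on each relatively compact open of $\xg\tim_{B}\he(B\tim[0,1])$ by a bounded complex of smooth complex vector bundles $E_{\bu}$. For each $t$ the map $i_{t}$ realizes $\xg\cong\xg\tim_{B}\he(B\tim\{t\})$ as the slice at height $t$, so that $i_{t}\ee[\ff^{\infty}]=\sum_{i}(-1)^{i-1}[\,i_{t}\ee E_{i}\he]$. As each $E_{i}$ is a vector bundle over a space of the form $\xgp\tim[0,1]$, its restrictions to the various slices are isomorphic bundles; equivalently, the inclusions $i_{t}$ are mutually homotopic and topological $K$-theory is homotopy invariant. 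Hence $i_{t}\ee[\ff^{\infty}]$ is independent of $t$, and this is compatible with the projective limit over $\xgp$, which is exactly the claim.

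The step I expect to be the main obstacle is the global horseshoe construction in point~(i): the resolution $E_{\bu}$ of the middle term must be built over all of $\xgp$ simultaneously, not merely locally, so everything hinges on the global splitting and the vanishing of $\textrm{Ext}^{1}$ supplied by Lemma~\ref{Utile}. Once these global lifts are available, the remainder of~(i) is formal bookkeeping of alternating sums, and point~(ii) is then essentially a direct appeal to homotopy invariance.
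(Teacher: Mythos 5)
Your treatment of (i) is sound and close in spirit to the paper's, though organized differently: the paper builds compatible locally free resolutions of all three smoothened sheaves at once, by patching local exact sequences of resolutions of the underlying coherent sheaves with cut-off functions, whereas you first use the flatness of $\ci_{Z\tim V}$ over $\pru\oo_{Z}$ (Lemma \ref{LemUnAppenCh4} (ii)) to see that the smoothened sequence stays exact and then run a global horseshoe argument on resolutions of the two outer terms. The lifts you need do exist for the reason you give: the obstruction to lifting a map out of a locally free $\ci$-module along a surjection lies in an $\mathrm{Ext}^{1}$ that vanishes by the fine-sheaf/spectral-sequence argument in the proof of Lemma \ref{Utile}. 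Either route produces a degreewise split short exact sequence of resolutions and hence the additivity of the alternating sums, which is exactly what factoring through $\kre{}(\xg)$ requires.

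In (ii) there is a gap. What you actually prove is that $i_{t}^{*}\bigl[\ff^{\infty}\bigr]$, the pullback of the topological class computed upstairs, is independent of $t$ --- that part is indeed just homotopy invariance of topological $K$-theory applied to a resolution $E_{\bu}$ over $\xgp\tim[0,1]$. But the class the proposition concerns is $\bigl[i_{t}^{*}\ff^{\infty}\bigr]$, the topological class of the restricted sheaf, which is defined intrinsically on the slice by resolving $i_{t}^{*}\ff^{\infty}$ there. Your identity $i_{t}^{*}\bigl[\ff^{\infty}\bigr]=\sum_{i}(-1)^{i-1}\bigl[i_{t}^{*}E_{i}\bigr]$ is fine, but to conclude you must also know that $i_{t}^{*}E_{\bu}$ is still a resolution of $i_{t}^{*}\ff^{\infty}$: restricting an exact complex of $\ci$-modules to a slice $\{t\}$ does not preserve exactness in general. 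The paper closes this point by observing that $\ff^{\infty}$ is flat over $\ci_{B\tim[0,1]}$, hence over $\ci_{[0,1]}$, so the restriction of the resolution to each slice remains exact and computes $\bigl[i_{t}^{*}\ff^{\infty}\bigr]$. Without this (or an equivalent) observation your argument establishes constancy of $i_{t}^{*}\bigl[\ff^{\infty}\bigr]$ but not of $\bigl[i_{t}^{*}\ff^{\infty}\bigr]$.
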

\begin{proof}
(i) We must show that if $\sutrgd{\ff}{\g}{\hh}$ is a strict exact sequence of rela\-tively coherent sheaves, then $[\ff^{\, \infty }\be]-[\g^{\, \infty }\be]+[\hh^{\, \infty }\be]=0$. This sequence is locally isomorphic to
\[
\sutrgdpt{pr^{-1}_{1}\, \ba{\ff\he}\oti_{pr^{-1}_{1}\oo_{Z}\he}\oo_{Z\tim V}^{\, \rel}}{pr^{-1}_{1}\, \ba{\g\he}\oti_{pr^{-1}_{1}\oo_{Z}\he}\oo_{Z\tim V}^{\, \rel}}{pr^{-1}_{1}\, \ba{\hh\he}\oti_{pr^{-1}_{1}\oo_{Z}\he}\oo_{Z\tim V}^{\, \rel}}{,}
\]
and is obtained by extension of the structure sheaves from an exact sequence of coherent analytic sheaves 
$\sutrgd{\ba{\ff\he}}{\ba{\g\he}}{\ba{\hh\he}}$ on $Z$. We can construct locally free resolutions $E_{\, \ba{\ff\he},\bullet }\he$, $E_{\, \ba{\g\he}, \bullet }\he$, $E_{\, \ba{\hh\he}, \bullet }\he$ of $\ba{\ff\he}$, $\ba{\g\he}$, $\ba{\hh\he}$ related by an exact sequence
$
\sutrgdpt{E_{\, \ba{\ff\he},\bullet }\he}{E_{\, \ba{\g\he}, \bullet }\he}{E_{\, \ba{\hh\he}, \bullet }\he}{.}
$
Using cut-off functions again, we patch these exact sequences together step by step and obtain resolutions 
$E_{\ff^{\, \infty },\bullet }\he$, $E_{\g^{\, \infty },\bullet }\he$, $E_{\hh^{\, \infty },\bullet }\he$ of 
$\ff^{\, \infty }\be$, $\g^{\, \infty }\be$, $\hh^{\, \infty }\be$ on $\xg'$ related by an exact sequence \[
\sutrgdpt{E_{\ff^{\, \infty },\bullet }\he}{E_{\g^{\, \infty },\bullet }\he}{E_{\hh^{\, \infty },\bullet }\he}{.}
\]
\par\medskip 
(ii) Let $\xgp$ be an open relatively compact analytic subset of $\xg$ and $E_{_{\bullet}}\he$ be a resolution of $\ff^{\infty }\be$ on $\xgp$. The class $\alpha (t)=\sum_{i=1}^{N}(-1)^{i-1}\bigl[ i_{t}\ee\,E_{i}\he\bigr]$ in $K(\xgp)$ is independent of $t$.
Since $\ff^{\infty }\be$ is flat over $\ci_{B\tim[0,1]}$, it is also flat over $\ci_{[0,1]}$, so that $\alpha (t)=\bigl[ i_{t}\ee\ff^{\infty }\be\bigr]$ in $K(\xgp)$.
 \end{proof}
\begin{remark}
 If $\zg$ is a relative analytic subspace of $\xg$, there is also a topological class map with support from $\textrm{Coh}_{\zg}^{\re}(\xg)$ to $\lim\limits_{\genfrac{}{}{0pt}{2}{\longleftarrow}{\xgp}}K_{\zg\cap\xgp}\he\,(\xgp)$ which factors through $K_{\zg}^{\re}(\xg)$. 
\end{remark}

\end{document}